\author[W.\thinspace{}Kim]{Wansu Kim}
\address{Wansu Kim\\%
Korea Institute for Advanced Study (KIAS) \\
Department of Mathematics\\%
Seoul, 02455\\%
South Korea}
\email{wansukim@kias.re.kr}
\numberwithin{equation}{subsection}
\theoremstyle{plain}
\newtheorem{thm}[subsection]{Theorem}
\newtheorem*{thm*}{Theorem}
\newtheorem{thmsub}[equation]{Theorem}
\newtheorem*{exthm*}{Expected Theorem}
\newtheorem{lemsub}[equation]{Lemma}
\newtheorem*{lem*}{Lemma}
\newtheorem{prop}[subsection]{Proposition}
\newtheorem{propsub}[equation]{Proposition}
\newtheorem*{prop*}{Proposition}
\newtheorem{corsub}[equation]{Corollary}
\newtheorem*{cor*}{Corollary}
\newtheorem*{claim*}{Claim}
\newtheorem*{conj*}{Conjecture}
\theoremstyle{definition}
\newtheorem*{defn*}{Definition}
\newtheorem{defnsub}[equation]{Definition}
\newtheorem{exasub}[equation]{Example}
\newtheorem*{exa*}{Example}
\theoremstyle{remark}
\newtheorem{rmksub}[equation]{Remark}
\newtheorem*{rmk*}{Remark}
\numberwithin{figure}{subsection}
\numberwithin{table}{subsection}
\newcounter{listnum}
\DeclareMathOperator{\Adm}{Adm}
\newcommand{\BX}{\mathbb{X}}
\newcommand{\BY}{\mathbb{Y}}
\DeclareMathOperator{\pr}{pr}
\DeclareMathOperator{\id}{id}
\newcommand{\dcj}{\text{-}\mathrm{conj}}
\newcommand{\ol}[1]{\overline{#1}}
\newcommand{\wt}[1]{\widetilde{#1}}
\newcommand{\et}{\text{\rm\'et}}
\newcommand{\wh}[1]{\widehat{#1}}
\newcommand{\XX}{\mathfrak{X}}
\newcommand{\sC}{\mathscr{C}}
\newcommand{\cH}{\mathcal{H}}
\newcommand{\cG}{\mathcal{G}}
\newcommand{\cO}{\mathcal{O}}
\newcommand{\bM}{\boldsymbol{\mathrm{M}}}
\newcommand{\triv}{\mathbf{1}}
\newcommand{\llpar}{(\!(}
\newcommand{\rrpar}{)\!)}
\DeclareMathOperator{\Spec}{Spec}
\DeclareMathOperator{\Spf}{Spf}
\newcommand{\perf}{\mathrm{perf}}
\newcommand{\N}{\mathcal{N}}
\DeclareMathOperator{\Frac}{Frac}
\newcommand{\ra}{\rightarrow}
\newcommand{\hra}{\hookrightarrow}
\newcommand{\thra}{\twoheadrightarrow}
\newcommand{\riso}{\xrightarrow{\sim}}
\newcommand{\dra}{\dashrightarrow}
\newcommand{\com}[1]{^{(#1)}}
\newcommand{\Gm}{\mathbb{G}_m}
\newcommand{\set}[1]{\{#1\}}
\newcommand{\iv}{^{-1}}
\newcommand{\ivtd}[1]{\tfrac{1}{#1}}
\newcommand{\sig}{\sigma}
\newcommand{\Q}{\mathbb{Q}}
\newcommand{\cS}{\mathcal{S}}
\newcommand{\KK}{{\mathsf K}}
\newcommand{\Qbar}{\overline{\Q}}
\newcommand{\cT}{\mathcal{T}}
\newcommand{\Z}{\mathbb{Z}}
\newcommand{\F}{\mathbb{F}}
\newcommand{\Fpbar}{\overline{\F}_p}
\newcommand{\Qp}{\Q_p}
\newcommand{\Qpbr}{\breve\Q_p}
\newcommand{\Qpbar}{\Qbar_p}
\newcommand{\Zp}{\Z_p}
\newcommand{\Zpbr}{\breve\Z_p}
\newcommand{\Fp}{\F_p}
\newcommand{\Fq}{\F_q}
\newcommand{\gl}{\mathfrak{gl}}
\DeclareMathOperator{\Nilp}{Nilp}
\DeclareMathOperator{\Isom}{Isom}
\newcommand{\loc}{\mathrm{loc}}
\newcommand{\OO}{\mathcal{O}}
\newcommand{\fo}{\mathscr{O}}
\newcommand{\sA}{\mathscr{A}}
\DeclareMathOperator{\Aut}{Aut}
\DeclareMathOperator{\diag}{diag}
\DeclareMathOperator{\Fr}{Fr}
\newcommand{\GL}{\mathrm{GL}}
\DeclareMathOperator{\GSp}{GSp}
\newcommand{\der}{\mathrm{der}}
\newcommand{\ad}{\mathrm{ad}}
\DeclareMathOperator{\Gal}{Gal}
\newcommand{\cB}{\mathcal{B}}
\newcommand{\Zink}{\mathbb{W}}
\newcommand{\g}{\mathfrak{g}}
\newcommand{\m}{\mathfrak{m}}
\newcommand{\gC}{\mathfrak{C}}
\newcommand{\gJ}{\mathfrak{J}}
\newcommand{\gN}{\mathfrak{N}}
\newcommand{\GLn}{\GL_n}
\newcommand{\ur}{\mathrm{ur}}
\newcommand{\univ}{\mathrm{univ}}
\DeclareMathOperator{\Hom}{Hom}
\newcommand{\lhom}{\mathscr{H}\mathit{om}}
\DeclareMathOperator{\End}{End}
\DeclareMathOperator{\Lie}{Lie}
\newcommand{\BI}{\mathbb{I}}
\newcommand{\cZ}{\mathcal{Z}}
\newcommand{\gr}{\mathtt{gr}}
\newcommand{\UH}{\mathfrak{H}}
\newcommand{\sS}{\mathscr{S}}
\DeclareMathOperator{\isoc}{isoc}
\newcommand{\cris}{\mathrm{cris}}
\newcommand{\Acris}{A_{\cris}}
\newcommand{\Bcris}{B_{\cris}}
\DeclareMathOperator{\Rep}{Rep}
\DeclareMathOperator{\Qisg}{Qisg}
\newcommand{\DD}{\mathbb{D}}
\newcommand{\bD}{\mathbf{D}}
\newcommand{\db}[1]{[\![#1]\!]}
\title[On central leaves of Hodge-type Shimura varieties]{On central leaves of Hodge-type Shimura varieties with parahoric level structure}
\begin{document}

\begin{abstract}
Kisin and Pappas \cite{KisinPappas:ParahoricIntModel} constructed integral models of Hodge-type Shimura varieties with parahoric level structure at $p>2$, such that the formal neighbourhood of a mod~$p$ point can be interpreted as a deformation space of $p$-divisible group with some Tate cycles (generalising Faltings' construction).
In this paper, we study the central leaf and the closed Newton stratum in the formal neighbourhoods of mod~$p$ points of Kisin-Pappas integral models with parahoric level structure; namely, we obtain the dimension of central leaves and the almost product structure of Newton strata.
In the case of hyperspecial level strucure (i.e., in the good reduction case), our main results were already obtained by Hamacher \cite{Hamacher:DeforSpProdStr}, and the result of this paper holds for ramified groups as well.
\end{abstract}
\keywords{Shimura varieties of Hodge type, deformations of $p$-divisible groups}
\subjclass[2010]{14L05, 14G35}
\maketitle
\tableofcontents

\section{Introduction}
Let $\sS$ be a moduli space over $\Z_{(p)}$ of principally polarised $g$-dimensional abelian varieties with some prime-to-$p$ level structure, and let $\sA$ be the universal abelian scheme over $\sS$. Given a geometric closed point $x\in\sS(\Fpbar)$, Oort  found a smooth equi-dimensional locally closed subscheme $\sC(x)\subset \sS_{\Fpbar}$ containing $x$, which is the locus where  the  $p$-divisible group associated to the universal abelian scheme is ``fibrewise constant'' (i.e., the geometric fibres of the $p$-divisible group are all isomorphic to $\sA_x[p^\infty]$), and its dimension can be explicitly computed in terms of the Newton polygon of $\sA_x[p^\infty]$; \emph{cf.} \cite[Theorems~2.2,~3.13]{Oort:Foliations}, \cite[\S7]{Chai:HeckeSiegel}.
Such $\sC(x)$ is called a \emph{central leaf}.

Oort also showed that by transporting $\sC(x)$ by ``isogeny correspondences'', one can ``fill up'' the Newton stratum of $\sS$ that contains $x$. A stronger and more precise statement can be formulated as the ``almost product structure'' of Newton strata; \emph{cf.} \cite[Theorem~5.3]{Oort:Foliations}.

The original motivation of Oort's study of central leaves \cite{Oort:Foliations}  is to understand ``Hecke orbits'' in $\sS_{\Fpbar}$. Later, Mantovan \cite{Mantovan:Thesis,Mantovan:Foliation} found an interesting application of the PEL generalisation of the almost product structure to the study of the cohomology of compact  PEL Shimura varieties at hyperspecial level at $p$, which is often referred to as \emph{Mantovan's formula}. (Roughly speaking, Mantovan's formula is the cohomological consequence of the almost product structure, if it is interpreted in terms of ``Igusa towers'' over central leaves and Rapoport-Zink spaces.)

Now, let us consider an integral canonical model $\sS$ of a Hodge-type Shimura variety at \emph{hyperspecial} level at $p>2$. Then generalising the definitions of Igusa towers and Rapoport-Zink spaces (together with all the expected group actions)  turns out to be not so trivial in this setting, because $\sS$ does not carry a convenient moduli interpretation, unlike the PEL case. (See \cite{HowardPappas:GSpin} for the construction of Hodge-type Rapoport-Zink spaces and the relationship to Hodge-type Shimura varieties, which is a simplification of the original construction in \cite{Kim:RZ,Kim:Unif}. For Hodge-type Igusa varieties, see \cite{Hamacher:ShVarProdStr}.) With these basic definitions in place, Hamacher \cite{Hamacher:ShVarProdStr} was able to prove the unramified Hodge-type generalisation of the almost product structure.

 It is worth noting that  these basic constructions for the unramified Hodge-type case crucially uses Kisin's study of isogeny classes of mod~$p$ points of the integral canonical models; namely, the existence of a natural map from certain affine Deligne-Lusztig varieties to the set of mod~$p$ points of the integral canonical models (\emph{cf.} \cite[Proposition~1.4.4]{Kisin:LanglandsRapoport}), which is a highly non-trivial result unlike the PEL case. Also, the strategy of proving the almost product structure for unramified Hodge-type Shimura varieties is to study the closed Newton stratum in the deformation space of $p$-divisible groups with Tate cycles \cite[Proposition~4.6]{Hamacher:DeforSpProdStr}, and globalise it using the result on isogeny classes of mod~$p$ points of Shimura varieties \cite[Proposition~1.4.4]{Kisin:LanglandsRapoport}.

Now, let $(G,\set{h})$ denote a Hodge-type Shimura datum, and let $p$ be an odd prime such that $p$ does not divide the order of $\pi_1(G^\der)$ and $G_{\Qp}$ splits after a tame extension (but not necessarily unramified). Then Kisin and Pappas \cite{KisinPappas:ParahoricIntModel} constructed integral models $\sS$ of Shimura varieties for $(G,\set{h})$ with parahoric level structure at $p$, whose formal completions at closed points are isomorphic to the formal completions of  the local models constructed by Pappas and Zhu \cite{PappasZhu:LocMod}. Although this integral model does not carry any convenient moduli interpretation, the formal completions of $\sS$ at closed points have a nice interpretation as the deformation spaces of $p$-divisible groups with certain cycles (generalising the case of hyperspecial levels); \emph{cf.} \cite[Corollary~4.2.4]{KisinPappas:ParahoricIntModel}.

Along the way, Kisin and Pappas defined a ``universal deformation'' of $p$-divisible groups with Tate cycles over the completed local ring of certain Pappas-Zhu local models (\emph{cf.} \cite[\S3]{KisinPappas:ParahoricIntModel}), which can be viewed as a generalisation of Faltings' construction in the unramified case (\emph{cf.} \cite[\S7]{Faltings:IntegralCrysCohoVeryRamBase}, \cite[\S4]{Moonen:IntModels}). The goal of this paper is to generalise Hamacher's results \cite{Hamacher:DeforSpProdStr} on central leaves and the closed Newton strata of Faltings deformation spaces to the ``Kisin-Pappas deformation spaces'' (which can be defined for certain ramified groups).

Let us now explain our deformation-theoretic result under the following global setting (which is the motivating case). Let $(G,\set h)$ denote a Hodge-type Shimura datum, and choose $p>2$ such that $G_{\Qp}$ splits after a tame extension. Assume that $p$ does not divide the order of $\pi_1(G^\der)$. Let $\sS$ be an integral model of a Shimura variety for $(G,\set h)$ with parahoric level structure at $p$ constructed by Kisin and Pappas, and let $\sA$ be the principally polarised abelian scheme over $\sS$ corresponding to a finite unramified map from $\sS$ into some Siegel modular variety inducing a closed immersion on the generic fibres. Let $\bar\sS$ denote the base change of the special fibre of $\sS$ over $\Fpbar$.

Let $x\in\bar\sS(\Fpbar)$. Then over the perfection $(\wh\cO_{\bar\sS,x})^{p^{-\infty}}$ of $\wh\cO_{\bar\sS,x}$ we have an $F$-isocrystal with $G$-structure in the sense of \cite[Definition~3.3]{RapoportRichartz:Gisoc} coming from the relative crystalline homology of $\sA$ and the ``crystalline realisation'' of absolute Hodge cycles on the generic fibre of $\sA$; \emph{cf.} Lemma~\ref{lem:GIsoc}. In particular, we can associate to $x$ a $\sig$-$G(\Qpbr)$ conjugacy class $[b]$, and it is possible to define the Newton stratification on $\Spec\wh\cO_{\bar\sS,x}$. Let $\gN_\cG\subset\Spec\wh\cO_{\bar\sS,x}$ denote the closed Newton stratum.

Let us now ``preview'' the main results of this paper. (We refer to the cited theorems for the precise statement.)
\begin{thm}\label{thm:main}
Let $x\in\bar\sS(\Fpbar)$, and consider the reduced locally closed subscheme $\sC:=\sC(x)\subset \bar\sS$ whose geometric points $\bar y$ are exactly those such that $\sA_{\bar y}[p^\infty]$ is isomorphic to $\sA_x[p^\infty]$. Then we have the following:
\begin{enumerate}
\item\label{thm:main:CenLeaf} (Corollary~\ref{cor:CenLeaf}) $\sC$ is smooth of equidimension $\langle 2\rho,\nu_{[b]}\rangle$, where $2\rho$ is the sum of positive roots of $G$ and $\nu_{[b]}$ is defined in Proposition~\ref{prop:QEnd}.
\item\label{thm:main:APS} (Corollary~\ref{cor:APS}) Let $\gC_\cG:=\Spec\wh\cO_{\sC,x}\subset\Spec\wh\cO_{\bar\sS,x}$, and let $\gJ_\cG\subset\Spec\wh\cO_{\bar\sS,x}$ denote the ``isogeny leaf'' (\emph{cf.} the paragraph above Theorem~\ref{thm:APS}). Then we have a natural isomorphism of perfect schemes
\[\pi_\infty:\gC_\cG^{p^{-\infty}}\times\gJ_\cG^{p^{-\infty}} \riso \gN_\cG^{p^{-\infty}},
\]
compatible with the case when $\cG = \GL(\Lambda)$ in  \cite[Corollary~4.4]{Hamacher:DeforSpProdStr}.
\end{enumerate}
\end{thm}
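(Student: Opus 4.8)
The plan is to reduce both statements to the purely deformation-theoretic results on the "Kisin-Pappas deformation space'' — that is, to the versions of \cite[Propositions~4.5,~4.6, Corollary~4.4]{Hamacher:DeforSpProdStr} valid for the formal neighbourhood $\Spec\wh\cO_{\bar\sS,x}$ with its universal $p$-divisible group carrying Tate (or crystalline) cycles, via the Kisin-Pappas isomorphism with the completed local ring of the Pappas-Zhu local model \cite[Corollary~4.2.4]{KisinPappas:ParahoricIntModel}. So the first step is to set up, over $\Spec\wh\cO_{\bar\sS,x}$, the $F$-isocrystal with $G$-structure of Lemma~\ref{lem:GIsoc}, its associated element $[b]$, the cocharacter $\nu_{[b]}$ of Proposition~\ref{prop:QEnd}, and the resulting Newton stratification; the closed Newton stratum $\gN_\cG$ is then cut out inside $\Spec\wh\cO_{\bar\sS,x}$.

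For part~\eqref{thm:main:CenLeaf}: I would first prove the \emph{local} statement, that $\gC_\cG := \Spec\wh\cO_{\sC,x}$ is formally smooth of dimension $\langle 2\rho,\nu_{[b]}\rangle$ inside $\Spec\wh\cO_{\bar\sS,x}$. This is where the main work lies. One characterises $\gC_\cG$ as the locus in the Kisin-Pappas deformation space where the $p$-divisible group with its cycles is "constant'', i.e.\ geometrically isomorphic (compatibly with cycles) to the one at $x$; equivalently, using the display- or Dieudonné-theoretic description of the universal deformation à la Zink/Kisin-Pappas, as the locus where a suitable "Hodge filtration'' or the associated Cartier module stays in the $\cG$-orbit of a constant one. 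The dimension count should come from an explicit analysis of the tangent space: the Kodaira-Spencer-type identification of the tangent space of the full deformation space with a certain piece of $\mathfrak g \otimes \bar k$ (cut out by $\cG$ and by the filtration), inside which the central-leaf directions are exactly those on which the associated graded for the Newton (slope) filtration is trivial — and counting those dimensions against the slope decomposition gives $\sum_{i<j}(\text{multiplicities}) = \langle 2\rho,\nu_{[b]}\rangle$, exactly as in Hamacher's hyperspecial computation but now keeping track of the possibly ramified/parahoric $\cG$. Having the local result, the \emph{global} smoothness and equidimensionality of $\sC \subset \bar\sS$ follows by the standard argument: central leaves are stable under the prime-to-$p$ Hecke action and (crucially) under $p$-power isogeny correspondences, which act transitively enough on the Newton stratum that every point of $\sC$ has a formal neighbourhood isomorphic to one of the form just analysed; invoke Kisin's description of isogeny classes (\cite[Proposition~1.4.4]{Kisin:LanglandsRapoport}, in the parahoric form available here) to see that $\sC$ is equidimensional with the computed dimension, and smooth because each completed local ring is.

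For part~\eqref{thm:main:APS}: here the strategy is to first establish, in the deformation space, the almost product decomposition $\gN_\cG^{p^{-\infty}} \cong \gC_\cG^{p^{-\infty}} \times \gJ_\cG^{p^{-\infty}}$ of perfect schemes, where $\gJ_\cG$ is the isogeny leaf through $x$ (the locus swept out by isogenies from $\sA_x[p^\infty]$ respecting cycles, with trivial "constant'' part). The map $\pi_\infty$ is constructed by combining a point of $\gC_\cG$ (a constant $p$-divisible group deformation, up to the perfection) with an isogeny datum from $\gJ_\cG$; that it is an isomorphism is checked on perfections because perfection kills the inseparability coming from Frobenius-twists in the isogenies, and one verifies it is a bijection on $\bar k$-points with an isomorphism on (perfected) tangent spaces, matching the dimension identity $\dim\gN_\cG = \dim\gC_\cG + \dim\gJ_\cG$. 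Compatibility with Hamacher's $\cG=\GL(\Lambda)$ case \cite[Corollary~4.4]{Hamacher:DeforSpProdStr} is by functoriality of the whole construction under the closed immersion into the Siegel case. The globalisation from the formal neighbourhood to $\bar\sS$ itself is then routine, again using that $\sC$, the Newton stratum, and the isogeny leaves are all cut out locally and are stable under the relevant correspondences.

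The main obstacle, in my view, is the local dimension computation and smoothness of $\gC_\cG$ in part~\eqref{thm:main:CenLeaf}: in the ramified parahoric setting one does not have the clean unramified Dieudonné/display dictionary that Hamacher exploits, so one must work directly with the Kisin-Pappas universal deformation over the Pappas-Zhu local model and control how the $\cG$-structure (for a parahoric, possibly special, $\cG$ of a tamely ramified group) interacts with the slope filtration on the tangent space; making the Kodaira-Spencer identification precise and $\cG$-equivariant, and extracting $\langle 2\rho,\nu_{[b]}\rangle$ from it, is the technical heart of the paper.
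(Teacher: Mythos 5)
There is a genuine gap, and it sits exactly at what you correctly identify as the technical heart. Your plan for part~(\ref{thm:main:CenLeaf}) is to compute the dimension of $\gC_\cG$ by a Kodaira--Spencer identification of the tangent space of the deformation space with a piece of $\g\otimes\bar k$ and then cut out the central-leaf directions by the slope filtration. In the parahoric (possibly ramified) setting this does not get off the ground: $R_\cG$ is the completed local ring of a Pappas--Zhu local model, which is normal but in general \emph{singular} at the point in question, so its tangent space is not controlled by $\g$ and tangent-space dimensions do not compute scheme dimensions. The paper deliberately avoids any tangent-space argument. Its route is to construct the formal group scheme $\Qisg_G(\BX)$ of \emph{tensor-preserving self quasi-isogenies} (via the internal-hom $p$-divisible group $\cH^G_b$, whose dimension $\langle 2\rho,\nu_{[b]}\rangle$ is computed from the slope decomposition of the adjoint $F$-isocrystal $(\g_{\Qpbr},(\ad b)\sigma)$ in Proposition~\ref{prop:QEnd}), to prove that $\Qisg^\circ_G(\BX)$ acts on $\Spf R_\cG$ (Theorem~\ref{thm:QisgActionRZ}, which requires the crystalline/\'etale tensor criteria of Propositions~\ref{prop:KPDeforThy} and~\ref{prop:KPEtaleCycles}), and to identify $\wh\gC_\cG^{p^{-\infty}}$ with the orbit of the closed point (Theorem~\ref{thm:CenLeaf}); dimension is read off from the orbit because it is insensitive to perfection. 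Note that this orbit description does \emph{not} yield formal smoothness of $\gC_\cG$ (the paper says so explicitly after Theorem~\ref{thm:CenLeaf}); smoothness of $\sC$ is obtained globally, by observing that all completed local rings $\wh\sC_y$ are isomorphic and invoking openness of the smooth locus. Your proposed globalisation via $p$-power isogeny correspondences and \cite[Proposition~1.4.4]{Kisin:LanglandsRapoport} is unavailable here: the paper stresses that the parahoric analogue of that result on isogeny classes is not known in general, and it is precisely what the paper manages to do without.

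For part~(\ref{thm:main:APS}) your outline is closer in spirit, but the verification step you propose (``bijection on $\bar k$-points with an isomorphism on perfected tangent spaces'') is not a meaningful check for perfect schemes, which carry no tangent-space information. The paper instead shows that Hamacher's isomorphism for $\GL(\Lambda)$ restricts to a closed immersion $\Qisg^\circ_G(\BX)_{\Fpbar}\times\wh\gJ^{p^{-\infty}}_\cG\to\wh\gN^{p^{-\infty}}_\cG$ with dense image, testing density on one-dimensional points $\Spec\Fpbar[[t]]\to\gN_\cG$ and using that Frobenius-invariant tensors on a constant $F$-isocrystal are determined by their special fibre (\cite[Lemma~3.9]{RapoportRichartz:Gisoc}) to see that the resulting quasi-isogenies are tensor-preserving. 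Without the group $\Qisg_G(\BX)$ and its action, neither half of the theorem has a proof along the lines you sketch.
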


If $\sS$ is a Siegel modular variety, then Theorem~\ref{thm:main}(\ref{thm:main:APS}) is a consequence of the ``almost product strurcure of the Newton strata'' \cite[Theorem~5.3]{Oort:Foliations}, and there are number of different proofs of Theorem~\ref{thm:main}(\ref{thm:main:CenLeaf}), for which we refer to \cite{Oort:DimLeaves} and references therein. Some proofs for the Siegel case can be generalised to the case of PEL Shimura varieties at hyperspecial level at~$p$. If $\sS$ is an integral canonical model of Hodge-type Shimura varieties with hyperspecial level at $p>2$, then this theorem was obtained by Hamacher \cite{Hamacher:DeforSpProdStr}. On the other hand, the author is not aware if the dimension of $\sC$ was obtained in the ramified PEL case in the literature. 

There is a parallel story for the local field of characteristic~$p$, where local shtukas play the role of $p$-divisible groups. Under some additional hypothesis on $\cG$, the analogue of Theorem~\ref{thm:main} for the deformation space of $\cG$-shtukas is already available; for example, \cite{HartlViehmann:Foliation} when $\cG$ is a split reductive group over $\Fq[[z]]$, and Viehmann and Wu \cite{ViehmannWu:CenLeaf} when $\cG$ is unramified (i.e., reductive over $\Fq[[z]]$). The global function field analogue of Theorem~\ref{thm:main} (for the function field analogue of Shimura varieties) was also obtained by Neupert \cite[Main~Theorem~1]{Neupert:thesis} under a certain restrictions on the group. It is quite plausible that these result could be generalised to allow certain ramification on the group $\cG$.

Let us remark on the proof of Theorem~\ref{thm:main}. 
Let $\BX:=\sA_x[p^\infty]$, equipped with the crystalline Tate cycles $(s_\alpha)$ (which come from the absolute Hodge cycles of the generic fibre of $\sA$). The key construction in the proof is the following:
\begin{prop}[{Corollary~\ref{cor:TenHom}, Proposition~\ref{prop:Qisg}}]\label{prop:main}
Assume that the $p$-divisible group $\BX$ is completely slope divisible (\emph{cf.} Definition~\ref{def:SlopeDivBT}).
Then there exists a formal group scheme $\Qisg_G(\BX)$, flat over $\Zpbr$ with perfect special fibre, which satisfies the following property:
\begin{enumerate}
\item For any f-semiperfect ring $R$, $\Qisg_G(\BX)(R)$ is naturally isomorphic to the group of self quasi-isogenies of $\BX_R$ preserving the tensors $(s_\alpha)$ in the sense of Definition~\ref{def:TenHom}. 
\item There is a natural isomorphism $\Qisg_G(\BX) \cong \Qisg_G^\circ(\BX)\rtimes \underline{J_b(\Qp)}$, where $\Qisg_G^\circ(\BX) \cong \Spf\Zpbr[[x_1^{p^{-\infty}},\cdots,x_d^{p^{-\infty}}]]$ as a formal scheme. Here, $d:=\langle 2\rho,\nu_{[b]}\rangle$ as in Theorem~\ref{thm:main}(\ref{thm:main:CenLeaf}).
\end{enumerate}
\end{prop}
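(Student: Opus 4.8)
Since the statement collates Corollary~\ref{cor:TenHom} and Proposition~\ref{prop:Qisg}, I describe how I would establish those. The plan is to interpret the tensor-preserving self quasi-isogenies of $\BX$ group-theoretically through crystalline Dieudonné theory over f-semiperfect rings, prove representability by descending from the case $\cG=\GL$, and then split off the discrete part $\underline{J_b(\Qp)}$. Fix the Dieudonné module $M:=\mathbb{D}(\BX)$, a finite free $\Zpbr$-module with Frobenius written $F=b\sig$, together with the Frobenius-invariant tensors $(s_\alpha)\subset M^{\otimes}$ cutting out the reductive model $G\subset\GL(M)$, where $b\in G(\Zpbr)$ represents $[b]$. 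For an f-semiperfect $\Fpbar$-algebra $R$ the crystalline period ring $\Acris(R)$ is a flat ($p$-torsion free and $p$-adically complete) $\Zpbr$-algebra, and the Dieudonné crystal of $\BX_R$ evaluates on it to $M\otimes_{\Zpbr}\Acris(R)$ with its induced Frobenius. Using the dictionary between quasi-isogenies of $p$-divisible groups over f-semiperfect rings and bounded isomorphisms of the attached isocrystals — together with Corollary~\ref{cor:TenHom}, which describes the crystalline realisation of a tensor-preserving quasi-homomorphism — one identifies the functor sending $R$ to the group of self quasi-isogenies of $\BX_R$ preserving $(s_\alpha)$ in the sense of Definition~\ref{def:TenHom} with
\[
 R\ \longmapsto\ \{\,g\in G(\Acris(R)[1/p]) : g^{-1}b\,\sig(g)=b,\ g\text{ and }g^{-1}\text{ have bounded denominators}\,\},
\]
the relation $g^{-1}b\sig(g)=b$ being exactly that $g$ commute with $F=b\sig$. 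Evaluating on $R=\Fpbar$, where $\Acris(\Fpbar)=\Zpbr$ and the boundedness is vacuous, recovers $\{g\in G(\Qpbr):g^{-1}b\sig(g)=b\}=J_b(\Qp)$; this already yields assertion~(1) once representability is in place.

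For representability I would first treat $\cG=\GL(M)$: Dieudonné theory exhibits the functor of self quasi-isogenies of $\BX_R$ as a filtered union, over the bound on denominators, of closed subfunctors, each of which on the special fibre is a Schubert-type closed subscheme of a Witt vector affine flag variety — hence representable by a perfect scheme by the representability results for Witt vector affine Grassmannians — and over $\Zpbr$ is promoted to a flat formal $\Zpbr$-scheme with perfect special fibre via the $\Acris$-description. Preservation of the tensors $(s_\alpha)$ is a closed condition, so $\Qisg_G(\BX)$ is a closed formal subscheme of $\Qisg_{\GL(M)}(\BX)$; the substantive point is that it is again \emph{flat} over $\Zpbr$ with perfect special fibre, and this is where the crystalline deformation theory of Kisin--Pappas enters: the $G$-structure on the universal deformation of $\BX$ with its Tate tensors shows, just as in the hyperspecial case, that imposing the tensor conditions is transversal, so flatness persists. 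This produces the formal group scheme $\Qisg_G(\BX)$, flat over $\Zpbr$ with perfect special fibre.

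For the decomposition in (2), since $R$ f-semiperfect forces $R_{\red}$ to be perfect, reduction modulo the nilradical gives a homomorphism $\Qisg_G(\BX)(R)\to\Qisg_G(\BX)(R_{\red})$ whose target sheafifies to the constant group $\underline{J_b(\Qp)}$ (an honest self quasi-isogeny over $\Fpbar$ base-changes to $R_{\red}$, and over a connected perfect base every tensor-preserving self quasi-isogeny of $\BX$ descends to $\Fpbar$ by the field case above). This surjection is split by the constant section given by the action of $J_b(\Qp)$ over $\Fpbar$, so $\Qisg_G(\BX)\cong\Qisg_G^\circ(\BX)\rtimes\underline{J_b(\Qp)}$ with $\Qisg_G^\circ(\BX):=\ker\!\big(\Qisg_G(\BX)\to\underline{J_b(\Qp)}\big)$, the self quasi-isogenies becoming trivial modulo the nilradical. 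To identify $\Qisg_G^\circ(\BX)$, observe that on the special fibre it is the locus of bounded $g\in G(\Acris(R)[1/p])$ with $g\equiv 1$ modulo the nilradical solving $g^{-1}b\,\sig(g)=b$; analysing this through the slope decomposition of the isocrystal $(\Lie G,\,\Ad(b)\sig)$ — the positive-slope part being exactly what produces these extra, nilradically trivialised, quasi-isogenies — one finds the solutions parametrised by the completion of the perfection of an affine space whose dimension is the total positive-slope contribution, which by Proposition~\ref{prop:QEnd} (fixing $\nu_{[b]}$ and matching the slope-$0$ part with $\Lie J_b$) equals $\langle 2\rho,\nu_{[b]}\rangle=d$, in accordance with Hamacher's computation for $\GL$ \cite{Hamacher:DeforSpProdStr}; lifting to $\Zpbr$ via the flat $\Acris$-model then gives $\Qisg_G^\circ(\BX)\cong\Spf\Zpbr[[x_1^{p^{-\infty}},\ldots,x_d^{p^{-\infty}}]]$.

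The step I expect to be the main obstacle is the integral structure: the functor is most naturally visible over $\Fpbar$ via $p$-divisible groups, so upgrading it to a \emph{flat} formal scheme over $\Zpbr$ — and checking that tensor preservation cuts out a \emph{flat} closed formal subscheme rather than merely a closed one — rests on a careful crystalline deformation-theoretic analysis built on the Kisin--Pappas universal deformation, which is the technical heart of Corollary~\ref{cor:TenHom} and Proposition~\ref{prop:Qisg}. A secondary subtlety is that the splitting is genuinely a semidirect product, which uses that the honest group $J_b(\Qp)$ — not merely an extension of it — already acts over $\Fpbar$.
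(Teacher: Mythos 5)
Your outline gets the dimension count and the semidirect-product splitting right in spirit (the slope decomposition of $(\Lie G,\Ad(b)\sigma)$ and the constant section over $\underline{J_b(\Qp)}$ are exactly what the paper uses, via Proposition~\ref{prop:QEnd} and the restriction of the Caraiani--Scholze section), but the core of the statement --- representability, flatness over $\Zpbr$, and the explicit form of $\Qisg_G^\circ(\BX)$ --- is asserted rather than proved, and the mechanisms you propose for it do not work. The paper's proof hinges on constructing an honest $p$-divisible group $\cH^G_b$ over $\Fpbar$ inside the Caraiani--Scholze internal hom $p$-divisible group, whose Dieudonn\'e module is the intersection of the lattice $\bM_\cH$ with the sub-$F$-isocrystal $\End_{(s_\alpha)}(\bM)[\ivtd p]=\g_{\Qpbr}$; its universal cover $\wt\cH^G_b$ then represents the tensor-preserving quasi-endomorphisms (Lemma~\ref{lem:TenHom}), and $\Qisg_G(\BX)$ is the closed formal subscheme of $(\wt\cH^G_b)^2$ cut out by $\gamma\gamma'=\gamma'\gamma=\id$ (Definition~\ref{def:Qisg}). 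With this in hand, flatness over $\Zpbr$ and perfectness of the special fibre are \emph{automatic} from the structure theory of universal covers of $p$-divisible groups (rigidity gives $\wt\cH(R)=\wt\cH(R/p)$, and the connected part is $\Spf\Zpbr[[x_i^{1/p^\infty}]]$ by Scholze--Weinstein); no transversality argument is needed or used. Your appeal to Kisin--Pappas deformation theory here is misplaced: that theory concerns deformations of $(\BX,(s_\alpha))$ over complete local rings and enters only in \S\ref{sec:KP}, for the \emph{action} of $\Qisg_G^\circ(\BX)$ on $\Spf R_\cG$; it says nothing about the quasi-isogeny group itself. Likewise, the functor you write down is a Frobenius-fixed-point functor with bounded poles, which is precisely a universal cover of a $p$-divisible group --- $\Spf$ of a huge non-noetherian ring --- and not a Schubert-type perfect scheme in a Witt vector affine Grassmannian, so that representability route does not obviously close either.

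The most concrete missing input is the reduction to the completely slope divisible case. The internal hom $p$-divisible group $\cH_{\BX,\BX}$ is only defined (following Caraiani--Scholze) when $\BX$ splits into isoclinic factors, so one must first replace $\BX_b$ by an isogenous completely slope divisible $p$-divisible group via a \emph{tensor-preserving} quasi-isogeny. The paper obtains this from Proposition~\ref{prop:SlopeFil}: the non-emptiness of $X_{\breve\KK}(b;\mu)$ (He's theorem, guaranteed by the tameness hypothesis) together with the existence of decent, completely slope divisible lifts $\dot{\tilde w}$ of elements of the extended affine Weyl group (Lemmas~\ref{lem:Decency} and~\ref{lem:AffWeyl}). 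This group-theoretic step is the reason the whole construction exists for ramified $\cG$, and it appears nowhere in your proposal. Finally, note that you have mislocated the difficulty: the ``integral structure'' over $\Zpbr$ is the easy part (it is the canonical lift), whereas the hard part is exhibiting the special fibre of $\Qisg_G^\circ(\BX)$ as the connected universal cover $\wt\cH^{G,\circ}_b$ via $\gamma\mapsto\gamma-\id$, which is what Proposition~\ref{prop:Qisg} actually proves.
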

In the unramified PEL case, the formal group scheme $\Qisg_G(\BX)$ was constructed in the work of Caraiani and Scholze   \cite[\S4.2]{CaraianiScholze:ShVar}, which was denoted as $\Aut_G(\wt \BX)$ in \emph{loc.~cit.} 
Clearly, this proposition can be extended to a $p$-divisible groups with tensors admitting a tensor-preserving quasi-isogeny to some completely slope divisible $p$-divisible groups. See Proposition~\ref{prop:EmbADLV} for a group-theoretic formulation of this condition.

It turns out that the connected component $\Qisg^\circ_G(\BX)$ of $\Qisg_G(\BX)$ acts on the formal completion $\wh{\sS}_x$; \emph{cf.} Theorem~\ref{thm:QisgActionRZ}. We obtain Theorem~\ref{thm:main} by interpreting the central leaf and the closed Newton stratum in $\Spec\wh\cO_{\bar\sS,x}$ in terms of the $\Qisg^\circ_G(\BX)$-orbits of the closed point and the isogeny leaf, respectively; \emph{cf.} Theorems~\ref{thm:CenLeaf},~\ref{thm:APS}. (In the unramified PEL case, this strategy to obtain the dimension of the central leaves was mentioned in \cite[Remark~4.2.13]{CaraianiScholze:ShVar}.) It seems that the $\Qisg^\circ_G(\BX)$-action on $\wh{\sS}_x$ could be of separate interest; \emph{cf.} Remark~\ref{rmk:QisgActionRZ}.

Our motivation for Theorem~\ref{thm:main}, especially (\ref{thm:main:APS}), is to generalise the almost product structure of Newton strata in the integral models of Hodge-type Shimura varieties at odd ``tame'' primes, generalising the result of Hamacher's \cite{Hamacher:ShVarProdStr}. As alluded earlier, however, in order to define Rapoport-Zink spaces and Igusa varieties we need to have some control of each isogeny class of mod~$p$ points of $\sS$ in terms of some union of affine Deligne-Lusztig varieties. Although this desired result is not known in the full generality, there are some cases where we can obtain it; \emph{cf.} \cite[Proposition~1.4.4]{Kisin:LanglandsRapoport}, \cite[Theorem~0.2]{HeZhou:ConnCompADL}. (See Remark~\ref{rmk:globalAPS} for more discussions.)
In the joint work with Hamacher \cite{HamacherKim:Mantovan}, we used Theorem~\ref{thm:main} to generalise the almost product structure \cite{Hamacher:ShVarProdStr} (allowing $G_{\Qp}$ to be ``tamely ramified'') and try to study the cohomological consequence, assuming a certain natural conjecture on isogeny classes of mod~$p$ points of Hodge-type Shimura varieties.

In \S\ref{sec:Notations} we review some group-theoretic and (semi-)linear algebraic background. In~\S\ref{sec:Qisg} we introduce the group of tensor-preserving self quasi-isogenies $\Qisg_G(\BX)$ and prove some basic properties stated in Proposition~\ref{prop:main}. In \S\ref{sec:KP}, we review the ``Kisin-Pappas deformation theory'' \cite[\S3]{KisinPappas:ParahoricIntModel}, and show that the connected component of the tensor-preserving self quasi-isogeny group $\Qisg_G(\BX)$ acts on ``Kisin-Pappas deformation spaces''; \emph{cf.} Theorem~\ref{thm:QisgActionRZ}. In \S\ref{sec:APS} we prove Theorem~\ref{thm:main}.

\subsection*{Acknowledgement}
The author thanks Paul Hamacher for many helpful discussions, and Julien Hauseux for his help with group theory used in the proof of Proposition~\ref{prop:QEnd}. The author would also like to thank the anonymous referee whose comments were greatly helpful. This work was supported by the EPSRC (Engineering and Physical Sciences Research Council) in the form of EP/L025302/1.

\section{Notation and preliminaries}\label{sec:Notations}
For a finite extension $E$ over $\Qp$, we write $\breve E:=\wh E^\ur$.

\subsection{Review of parahoric groups and extended affine Weyl groups}\label{subsec:GpTh}
\begin{defnsub}\label{def:KP}
Let $G$ be a connected reductive group over $\Qp$, and we fix a point $x\in\cB(G,\Qp)$ in the extended Bruhat-Tits building. For any algebraic field extension  $K/\Qp$ that is finitely ramified, we view $x\in\cB(G,\Qp)$ via the natural embedding $\cB(G,\Qp)\hra\cB(G,K)$. Then we obtain the following smooth $\Zp$-models of $G$:
\begin{enumerate}
\item The \emph{Bruhat-Tits group scheme} $\cG ( = \cG_x)$, which has the property that $\cG_x(\Zp^\ur)\subset G(\Qp^\ur)$ is the stabiliser of $x\in\cB(G,\Qp^\ur)$.
\item The \emph{parahoric group scheme} $\cG^\circ ( = \cG^\circ_x)$, which is the open subgroup of $\cG$ with connected special fibre.
\end{enumerate}
Note that $\cG(\Zp)\subset G(\Qp)$ is the full stabiliser of $x\in\cB(G,\Qp)$, as well as the full stabiliser of the interior of the facet containing $x$.

We write $\KK:=\cG(\Zp)$, $\KK^\circ:=\cG^\circ(\Zp)$, $\breve\KK:=\cG(\Zpbr)$, and $\breve\KK^\circ:=\cG^\circ(\Zpbr)$.
\end{defnsub}

For any connected reductive group $G$ over $\Qpbr$, Kottwitz \cite[(7.1.1)]{Kottwitz:Gisoc2} defined a natural surjective homomorphism
\begin{equation}\label{eqn:KottHom}
\kappa_G:G(\Qpbr)\thra \pi_1(G)_{I_{\Qp}} ( = X^*(Z(\wh G)^{I_{\Qp}})),
\end{equation}
which is functorial in $G$, where $\pi_1(G)$ is the algebraic fundamental group and $I_{\Qp}$ is the inertia group of $\Qp$.
Note that $\kappa_G$ can be concretely described when $G$ is a torus (\emph{cf.} \cite[\S7.2]{Kottwitz:Gisoc2}). Since $\kappa_G$ is required to be functorial in $G$ and $\kappa_G$ should be trivial if $G$ is simply connected and semi-simple (as $\pi_1(G) = 0$), the torus case uniquely determines $\kappa_G$ for any connected reductive group $G$ over $\Qpbr$ (\emph{cf.} \cite[\S7.3]{Kottwitz:Gisoc2}).

We set
\begin{equation}\label{eqn:ParahoricGenerated}
 G(\Qpbr)_1:=\ker(\kappa_G).
\end{equation}

Returning to the setting of Definition~\ref{def:KP} (so $G$ is now a connected reductive group over $\Qp$),
let us recall some basic facts:
\begin{enumerate}
\item We have $\KK^\circ = \KK\cap G(\Qpbr)_1$ and $\breve\KK^\circ = \breve\KK\cap G(\Qpbr)_1$; \emph{cf.} \cite[Appendix, Proposition~3, Remark~11]{HainesRapoport:Gamma1p-Drinfeld}. In other words, $\KK^\circ$ and $\breve\KK^\circ$ are parahoric subgroups in the sense of  \cite[Appendix, Definition~1]{HainesRapoport:Gamma1p-Drinfeld}.
\item If $G=T$ is a torus, then we have $T(\Qpbr)_1 = \mathcal{T}^\circ(\Zpbr)$, where $\mathcal{T}^\circ$ is the connected  N\'eron model of $T$. It turns out that $T(\Qpbr)_1$ is the unique parahoric subgroup of $T(\Qpbr)$, and
we have the following short exact sequence
\[
\xymatrix@1{
1 \ar[r] & T(\Qpbr)_1 \ar[r] & T(\Qpbr)\ar[r]^-{\kappa_T} & X_*(T)_{I_{\Qp}} \ar[r] &0.
}
\]
 In general, $G(\Qpbr)_1$ is generated by the parahoric subgroups of $G(\Qpbr)$.
\end{enumerate}

Let us now make a convenient choice of maximal torus $T\subset G$ as follows.
Let $S$ be a maximal $\Qpbr$-split torus of $G$ which is defined over $\Qp$ and contains a maximal $\Qp$-split torus. We furthermore arrange the choice of  $S$ so that the maximal $\Qp$-split subtorus of $S$ defines an apartment containing $x\in\cB(G,\Qp)$.  Let $T:=\cZ_G(S)$ be the centraliser of $S$. Since $G$ is quasi-split over $\Qpbr$, it follows that $T$ is a maximal torus of $G$. 

\begin{rmksub}\label{rmk:Cartan}
Let $\cS^\circ$ and $\cT^\circ$ respectively denote the connected components of the N\'eron models of $S$ and $T$. Then by \cite[Proposition~4.6.4]{BruhatTits:RedGp2}, we have the following closed immersions
\[
\cS^\circ \hra \cT^\circ \cong \cZ_{\cG^\circ}(\cS^\circ) \hra \cG^\circ
\]
extending the natural maps on the generic fibres.
Therefore, it follows that we have
\[
T(\Qpbr)_1  = T(\Qpbr) \cap \breve\KK^\circ,
\]
and the inclusion $T(\Qpbr)_1\hra \breve\KK^\circ$ comes from a closed embedding of the parahoric group schemes $\cT^\circ\hra \cG^\circ$.
\end{rmksub}

\begin{defnsub}\label{def:AffWeyl}
Let $\N_S\subset G$ denote the normaliser of $S$ in $G$.
Recall that the extended affine Weyl group is defined to be
\[\wt W:=\N_S(\Qpbr)/T(\Qpbr)_1.\]
As $S$ and $T$ are defined over $\Qp$, the natural $\sig$-action on $G(\Qpbr)$ stabilises $\N_S(\Qpbr)$, which defines a $\sig$-action on $\wt W$. For $\tilde w\in \wt W$, we write $\dot{\tilde w}\in \N_S(\Qpbr)$ to denote a lift of $\tilde w$.

We define the following subgroup:
\[
\wt W_{\breve\KK^\circ}:=(\N_S(\Qpbr)\cap \breve\KK^\circ)/T(\Qpbr)_1 \subset \wt W.
\]
By \cite[Appendix, Proposition~12]{HainesRapoport:Gamma1p-Drinfeld}, $\wt W_{\breve\KK^\circ}$ coincides with the Weyl group of the maximal reductive quotient of $\cG^\circ_{\Fpbar}$ with respect to the maximal torus $\cS^\circ_{\Fpbar}$. In particular, $\wt W_{\breve\KK^\circ}$ is finite.
\end{defnsub}

\begin{rmksub}\label{rmk:AffWeyl}
We recall a few basic properties of $\wt W$ from  \cite[Appendix]{HainesRapoport:Gamma1p-Drinfeld}.
\begin{enumerate}
\item Let $W_0:=\N_S(\Qpbr)/T(\Qpbr)$ denote the Weyl group associated to the relative root system for $G_{\Qpbr}$. (Note that $W_0$ is the $I_{\Qp}$-invariant of the Weyl group of $G_{\Qpbar}$.) Then we have a following short exact sequence:
\[
\xymatrix@1{
 0 \ar[r]& X_*(T)_{I_{\Qp}} \ar[r]&\wt W \ar[r]& W_0 \ar[r]&1.
 }\]

If $x\in\cB(G,\Qp)$ is a special vertex, then $\wt W_{\breve\KK_x^\circ}$ maps isomorphically onto $W_0$ by the natural projection $\wt W\thra W_0$; \emph{cf.} \cite[Appendix, Proposition~13]{HainesRapoport:Gamma1p-Drinfeld}. In particular, we have $\wt W \cong X_*(T)_{I_{\Qp}}\rtimes W_0$, where the splitting depends on the choice of special vertex.
\item
There is a bijection
  \begin{equation}\label{eqn:CartanDecomp}
  \wt W_{\breve\KK^\circ}\backslash \wt W/\wt W_{\breve\KK^\circ} \riso \breve\KK^\circ\backslash G(\Qpbr)/\breve\KK^\circ
  \end{equation}
  sending $ \wt W_{\breve\KK^\circ}\tilde w\wt W_{\breve\KK^\circ}$ to $\breve\KK^\circ \dot{\tilde w}\breve\KK^\circ$; \emph{cf.}  \cite[Appendix, Proposition~8]{HainesRapoport:Gamma1p-Drinfeld}.

If $\breve\KK^\circ$ is an Iwahori subgroup (in which case    $ \wt W_{\breve\KK^\circ}$ is a trivial subgroup), the above bijection becomes $\wt W\riso \breve\KK^\circ\backslash G(\Qpbr)/\breve\KK^\circ$. If $\breve\KK^\circ$ is a special parahoric sugbroup (in which case we have $ \wt W_{\breve\KK^\circ}\cong W_0$), the above bijection becomes
\[
 X_*(T)_{I_{\Qp}}/W_0 \riso \breve\KK^\circ\backslash G(\Qpbr)/\breve\KK^\circ.
\]
In particular, if $\breve\KK^\circ$ is hyperspecial (so $X_*(T) = X_*(T)_{I_{\Qp}}$ and $W_0$ is the Weyl group for $G_{\Qpbar}$) then the left hand side is in bijection with the set of dominant cocharacters (with respect to some choice of borel subgroup), and the bijection recovers the Cartan decomposition.
\end{enumerate}
\end{rmksub}

\subsection{Review on $G$-(iso)crystals}\label{subsec:Gisoc}
\begin{defnsub}\label{def:grmu}
Let $\XX$ be an $\Zpbr$-scheme\footnote{It is often convenient and natural to allow $\XX$ to be an analytic space or a formal scheme. But it will be quite obvious how to adapt the subsequent discussion to these cases.}. For a cocharacter  $\mu:\Gm\ra \GL(M)_{\Zpbr}$, we say that a grading $\gr^\bullet(\OO_\XX\otimes_{\Zpbr} M)$ is \emph{induced from $\mu$}, if the $\Gm$-action on $\OO_\XX\otimes_{\Zpbr} M$ via $\mu$ leaves each grading stable and the resulting $\Gm$-action on $\gr^a(\OO_\XX\otimes_{\Zpbr} M)$ is given by
\[ \xymatrix@1{\Gm \ar[rr]^-{z\mapsto z^{-a}} & & \Gm \ar[rr]^-{z\mapsto z\id} & & \GL(\gr^a(\OO_\XX\otimes_{\Zpbr} M))}.\]
\end{defnsub}

Let $\breve{\Z}_p:=W(\Fpbar)$ and $\breve{\Q}_p:= W(\Fpbar)_\Q$, and we let $\sig$ denote the Witt vector Frobenius on $\breve{\Z}_p$ and $\breve{\Q}_p$.

\begin{defnsub}\label{def:nu}
Let $\bD$ be a pro-torus with character group $X^*(\bD) = \Q$; i.e., $\bD=\varprojlim\Gm$ where the transition maps is the $N$th power maps ordered by divisibility.
%
\end{defnsub}

We now work under the setting introduced in Definition~\ref{def:KP}; namely, let $G$ be a \emph{connected} reductive group over $\Qp$, and $\cG$ be a Bruhat-Tits integral model of $G$ as in Definition~\ref{def:KP}. We set $\breve\KK:=\cG(\Zpbr)$.
\begin{defnsub}
 %
For $b\in G(\breve{\Q}_p)$, we let $[b]:=\set{g\iv b\sigma(g),\ \forall g\in G(\breve{\Q}_p)}$ denote the $\sig$-$G(\breve{\Q}_p)$ conjugacy class of $b$. Similarly, we let $\db b:=\set{g\iv b \sigma(g);\ g\in \breve\KK}$ denote the $\sig$-$\breve\KK$ conjugacy class containing $b$.
\end{defnsub}

If $\cG = \GLn$, then the above definition has an interpretation in terms of $F$-(iso)crystals as follows. For $b,b'\in\GLn(\Qpbr)$, two $F$-isocrystals $(\Qpbr^n,b\sig)$ and $(\Qpbr^n, b'\sig)$ are  isomorphic if and only if $b$ and $b'$ are $\sig$-$\GLn(\Qpbr)$ conjugate. Similarly, two virtual $F$-crystals $(\Zpbr^n,b\sig)$ and $(\Zpbr^n, b'\sig)$ are  isomorphic if and only if $b$ and $b'$ are $\sig$-$\GLn(\Zpbr)$ conjugate. (By virtual $F$-crystal, we mean a $\Zpbr$-lattices in a $F$-isocrystal that is not necessarily $F$-stable.)

Kottwitz \cite[\S4]{Kottwitz:Gisoc1} showed that for $b\in G(\breve{\Q}_p)$ there exists a unique homomorphism
\[\nu_b:\bD\ra G_{\breve{\Q}_p}\]
such that for any representation $\rho:G_{\breve{\Q}_p}\ra \GL(n)_{\breve{\Q}_p}$ the $\Q$-grading associated to the inverse of $\rho\circ\nu_b$ is the slope decomposition for $(\breve{\Q}_p^n, \rho(b)\sig)$. (Note our sign convention in Definition~\ref{def:grmu}.) Furthermore, the uniqueness shows that for any $g,b\in G(\breve{\Q}_p)$ we have \[\nu_{gb\sigma(g)\iv} = g\nu_b g\iv.\]
Therefore, the $G(\breve{\Q}_p)$-conjugacy class of $\nu_b$ only depends on the $\sig$-$G(\breve{\Q}_p)$ conjugacy class $[b]$. So the $G(\breve{\Q}_p)$-conjugacy class of $\nu_b$, which will be denoted by $\nu_{[b]}$, only depends on the $\sig$-conjugacy class $[b]$.

The uniqueness of $\nu_b$ also implies that $\nu_{\sigma(b)} = \sigma^*\nu_b$. Since we have $\sigma(b) = b\iv b \sig(b)$, it follows that
\[
\sigma^*\nu_b = b\iv \nu_b b.
\]
In particular, the conjugacy class $\nu_{[b]}$ is $\sig$-stable; that is, the $G(\Qpbr)$-conjugacy class of cocharacters $\nu_{[b]}$ is defined over $\Qp$. (Note that unless $G$ is quasi-split over $\Qp$, this does not necessarily imply that $\nu_{[b]}$ contains a cocharacter defined over $\Qp$.)

\begin{defnsub}\label{def:Decency}
We say that $b\in G(\breve{\Q}_p)$ is \emph{decent} if for some $r\in\Z$ we have:
\[
(b\sig)^r = p^{r\nu_b}\sigma^r,
\]
where the equality takes place in $G(\breve{\Q}_p)\rtimes\langle\sig\rangle$. We call the above  equation a \emph{decency equation}. By \cite[Corollary~1.9]{RapoportZink:RZspace}, if $b$ is decent then we have $b\in G(\Q_{p^r})$, where $r$ is as in the decency equation.
\end{defnsub}

Kottwitz \cite[\S4]{Kottwitz:Gisoc1} showed that  any $\sig$-conjugacy class $[b]$ in $G(\breve{\Q}_p)$ contains a decent element provided that $G$ is a \emph{connected} reductive group over $\Qp$.

Consider the following group valued functor $J_b=J_{G,b}$ defined as follows:
\begin{equation}\label{eqn:J}
J_b(R):=\set{g\in G(R\otimes_{\Qp}\breve{\Q}_p)|\ gb\sigma(g)\iv = b}
\end{equation}
for any $\Qp$-algebra $R$. Note that for any $g,b\in G(\breve{\Q}_p)$ we have $J_{gb\sigma(g)\iv} (R) = g J_b(R) g\iv$ as a subgroup of $G(R\otimes_{\Qp}\breve{\Q}_p)$; in particular, $J_b$ essentially depends only on the $\sig$-conjugacy class of $b$ in $G(\breve{\Q}_p)$.

\begin{propsub}\label{prop:J}
The group valued functor $J_b$ can be represented by a connected reductive group over $\Qp$. Furthermore, there is a unique isomorphism
\[(J_b)_{\breve{\Q}_p}\riso G_{\nu_b}:=\cZ_G(\nu_b) \]
which induce that inclusion $J_b(\Qp) \hra  G_{\nu_b}(\breve{\Q}_p) \hra G(\breve{\Q}_p)$.
(Note that the centraliser $G_{\nu_b} \subset G_{\breve{\Q}_p}$ is a Levi subgroup.
\end{propsub}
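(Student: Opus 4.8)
This statement goes back to Kottwitz \cite[\S4]{Kottwitz:Gisoc1} and Rapoport--Zink \cite[Prop.~1.12]{RapoportZink:RZspace}; I would organise the proof as follows. Since $J_b$ depends only on $[b]$ up to isomorphism (as noted just above) and every $\sig$-conjugacy class in $G(\Qpbr)$ contains a decent element, we may assume $b$ decent, with decency equation $(b\sig)^r = p^{r\nu_b}\sig^r$. Then $b \in G(\Q_{p^r})$ and, iterating $\sig^*\nu_b = b^{-1}\nu_b b$ together with the decency equation, $\sig^{*r}\nu_b = p^{-r\nu_b}\nu_b\,p^{r\nu_b} = \nu_b$ (as $\nu_b$ centralises $p^{r\nu_b}$); hence $\nu_b$ is defined over $\Q_{p^r}$ and $M := \cZ_G(\nu_b) = G_{\nu_b}$ is a Levi subgroup of $G_{\Q_{p^r}}$, a fortiori of $G_{\Qpbr}$.

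The key move is to rewrite $J_b$ via the $\sig$-semilinear automorphism $\Phi := \Int(b)\circ\sig$ of $G_{\Qpbr}$: the condition $g b\sig(g)^{-1} = b$ is exactly $\Phi(g) = g$, so $J_b$ is the fixed-point functor of $\Phi$. From $\sig^*\nu_b = b^{-1}\nu_b b$ one checks that $\Phi$ stabilises $M_{\Qpbr}$, and since $\Phi^r = \Int(p^{r\nu_b})\circ\sig^r$ with $p^{r\nu_b}$ central in $M$, we get $\Phi^r|_M = \sig^r|_M$. Using the functoriality of the slope morphism in families (the identity $\nu_{gb\sig(g)^{-1}} = g\nu_b g^{-1}$), one checks $J_b(R) \subseteq M(R\otimes_{\Qp}\Qpbr)$ for every $\Qp$-algebra $R$; for such $g$ the decency equation gives $\sig^r(g) = p^{-r\nu_b}\,g\,p^{r\nu_b} = g$ (the last equality because $g$ centralises the central element $p^{r\nu_b}$ of $M$), so $g \in M(R\otimes_{\Qp}\Q_{p^r})$. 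Thus $J_b(R) = \{\, g \in M(R\otimes_{\Qp}\Q_{p^r}) \mid g b\sig(g)^{-1} = b \,\}$, a closed subfunctor of $\Res_{\Q_{p^r}/\Qp} M_{\Q_{p^r}}$ defined by polynomial equations. Hence $J_b$ is representable by an affine $\Qp$-group of finite type: concretely, the $\Qp$-form ${}^{\Phi}M$ of $M$ obtained by (effective) Galois descent of the $\Phi$-twisted descent datum on $M_{\Qpbr}$.

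Base-changing to $\Qpbr$ trivialises the twist, giving a canonical isomorphism $(J_b)_{\Qpbr} = ({}^{\Phi}M)_{\Qpbr} \riso M_{\Qpbr} = (G_{\nu_b})_{\Qpbr}$, and the same unwinding over $\Q_{p^r}$ identifies $(J_b)_{\Q_{p^r}}$ with an inner form of the connected reductive group $M_{\Q_{p^r}}$. Since smoothness, connectedness and reductivity may be checked after the faithfully flat base change $\Qp \to \Qpbr$, the group $J_b$ is connected reductive over $\Qp$; and by construction the above isomorphism induces $J_b(\Qp) \hra G_{\nu_b}(\Qpbr) \hra G(\Qpbr)$. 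Finally, any two isomorphisms $(J_b)_{\Qpbr} \riso (G_{\nu_b})_{\Qpbr}$ inducing this inclusion differ by an automorphism of $(G_{\nu_b})_{\Qpbr}$ acting trivially on $J_b(\Qp)$; since $\Qp$ is infinite and $J_b$ is smooth and connected, $J_b(\Qp)$ is Zariski-dense in $(J_b)_{\Qpbr}$, so that automorphism is the identity, proving uniqueness.

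The one genuinely non-formal input is the inclusion $J_b(R) \subseteq M(R\otimes_{\Qp}\Qpbr)$ --- i.e.\ that a self-quasi-isogeny of the isocrystal defined by $b\sig$ respects the slope grading over an arbitrary base ring $R$ --- which is where the functoriality of Kottwitz's slope morphism (and implicitly the structure theory of isocrystals) really enters; I expect this to be the point requiring most care, together with the verifications that $\Phi$ preserves $M$ and that $\Phi^r$ is inner on $M$. Everything else --- existence of a decent representative, the presentation via $\Res_{\Q_{p^r}/\Qp}$, effectivity of the descent, and the density argument --- is routine bookkeeping around the decency equation.
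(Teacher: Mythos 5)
Your proposal is correct and follows essentially the same route as the paper: the paper's entire proof is to reduce to the case of a decent $b$ and then invoke \cite[Corollary~1.14]{RapoportZink:RZspace}, and your argument is precisely an unwinding of that reference (decency equation, $\nu_b$ rational over $\Q_{p^r}$, $J_b$ as the $\Phi$-twisted form of the Levi $\cZ_G(\nu_b)$). You also correctly isolate the one non-formal input, namely that quasi-isogenies over an arbitrary $\Qp$-algebra $R$ preserve the slope grading, which is exactly the content supplied by \emph{loc.\ cit.}
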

\begin{proof}
We may assume that $b$ is decent, in which case the proposition was essentially proved in \cite[Corollary~1.14]{RapoportZink:RZspace}.
\end{proof}

The extended affine Weyl group $\wt W$ is a very useful tool to study $\sig$-$G(\Qpbr)$ and $\sig$-$\breve\KK$ conjugacy classes.
Let us first recall the following result of X.~He's on $\sig$-$G(\Qpbr)$ conjugacy classes  \cite[Theorem~3.7]{He:GeomADL}.
\begin{thmsub}\label{thm:He}
Using the notation from Definition~\ref{def:AffWeyl},
The natural map $\N_S(\Qpbr)\hra G(\Qpbr) \thra G(\Qpbr)/\sigma\dcj$ induces a surjective map
\[
\wt W/\sigma\dcj \thra G(\Qpbr)/\sigma\dcj.
\]
For $\tilde w\in\wt W$, we let $[\tilde w]$ denote the $\sigma$-conjugacy of $\tilde w$, which could also viewed as the $\sigma$-conjugacy class $[\dot{\tilde w}]$ of some lift $\dot{\tilde w}$ via the isomorphism above.
\end{thmsub}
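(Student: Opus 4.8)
\textbf{Proof plan for Theorem~\ref{thm:He}.}
The plan is to reduce the statement to the classification of $\sigma$-conjugacy classes in $G(\Qpbr)$ via the Kottwitz invariant and the Newton cocharacter, and to realise representatives of each class by elements of $\N_S(\Qpbr)$. First I would use that, by Kottwitz's theorem \cite{Kottwitz:Gisoc2}, a $\sigma$-conjugacy class $[b]$ is determined by the pair $(\kappa_G(b),\nu_{[b]})$, where $\nu_{[b]}$ ranges over the $\sigma$-stable (i.e. defined over $\Qp$) $G(\Qpbr)$-conjugacy classes of Newton cocharacters and $\kappa_G(b)\in\pi_1(G)_{I_{\Qp}}$ is compatible with $\nu_{[b]}$ in the usual sense. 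The key point is that every such class contains an element of the normaliser $\N_S(\Qpbr)$: one may first reduce, via the reduction method of He--Nie (or directly via the theory of $(J,w,\delta)$-alcove elements), to the case where $b$ is \emph{basic} in a Levi subgroup $M=G_{\nu_b}$ containing $S$; that Levi is defined over $\Qp$ (since $\nu_{[b]}$ is $\sigma$-stable and $S$ is a maximal $\Qpbr$-split torus defined over $\Qp$ contained in $M$), and a basic class in $M(\Qpbr)$ is represented by an element of $\N_S(\Qpbr)$ because $\kappa_M$ restricted to $\N_S(\Qpbr)$ is already surjective onto $\pi_1(M)_{I_{\Qp}}$ — this last surjectivity follows from the exact sequence in Remark~\ref{rmk:AffWeyl}(1) relating $\wt W$, $X_*(T)_{I_{\Qp}}$ and $W_0$, together with the description $\pi_1(M)_{I_{\Qp}} = X_*(T)_{I_{\Qp}}/(\text{coroot lattice of }M)$.

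Concretely, the steps I would carry out are: (1) recall that $\N_S(\Qpbr) \twoheadrightarrow \wt W$ and that $\wt W$ surjects onto $W_0 = \N_S(\Qpbr)/T(\Qpbr)$ with kernel $X_*(T)_{I_{\Qp}}$ (Remark~\ref{rmk:AffWeyl}(1)); (2) observe that two elements of $\N_S(\Qpbr)$ that are $\sigma$-conjugate in $G(\Qpbr)$ and lie over the same $\wt W$-class question is subsumed by working modulo $\sigma$-conjugacy in $\wt W$, so the map $\wt W/\sigma\dcj \to G(\Qpbr)/\sigma\dcj$ is well defined; (3) prove surjectivity by showing every $\sigma$-conjugacy class meets $\N_S(\Qpbr)$. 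For step (3), given $[b]$, pick a decent representative $b$ (possible by Kottwitz, as recalled after Definition~\ref{def:Decency}), so that $\nu_b$ factors through a maximal split torus; after $G(\Qpbr)$-conjugation we may arrange $\nu_b \in X_*(S)_\Q$, hence $M := G_{\nu_b} \supseteq S$ and $M$ is defined over $\Qp$. Then $b \in M(\Qpbr)$ is basic, so $\kappa_M(b)$ and the condition of being basic determine $[b]_M$; choosing any $\dot{\tilde w} \in \N_S(\Qpbr) \subset M(\Qpbr)$ with $\kappa_M(\dot{\tilde w}) = \kappa_M(b)$ (possible by the surjectivity in the previous paragraph) gives an element $\sigma$-conjugate to $b$ in $M(\Qpbr)$, hence in $G(\Qpbr)$.

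The main obstacle is step (3), specifically the passage from a general $\sigma$-conjugacy class to a basic one in a Levi and the surjectivity of $\kappa_M$ on $\N_S(\Qpbr) \cap M(\Qpbr)$ in the possibly non-quasi-split and ramified setting; one must be careful that $\nu_{[b]}$ being merely $\sigma$-\emph{stable} (rather than containing a $\Qp$-rational representative) still allows the Levi $M = \cZ_G(\nu_b)$ to be chosen over $\Qp$ after suitable conjugation — this works precisely because $S$ is chosen (as in \S\ref{subsec:GpTh}) to be a maximal $\Qpbr$-split torus defined over $\Qp$, so that $G_{\Qpbr}$ is quasi-split and any $\sigma$-stable Newton class can be conjugated into $X_*(S)_\Q$. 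Granting the results on $\sigma$-conjugacy classes from Kottwitz \cite{Kottwitz:Gisoc1,Kottwitz:Gisoc2} and the reduction techniques from \cite{He:GeomADL}, the remaining verifications are the bookkeeping of invariants under the projection $\N_S(\Qpbr) \to \wt W$, which are routine.
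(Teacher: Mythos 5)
The paper does not actually prove this statement: it is quoted directly from X.~He \cite[Theorem~3.7]{He:GeomADL}, whose proof runs through the Iwahori--Bruhat decomposition and the Deligne--Lusztig reduction method on minimal-length elements in $\sigma$-conjugacy classes of $\wt W$ --- a combinatorial route quite different from yours. Your steps (1) and (2) (well-definedness of $\wt W/\sigma\dcj\to G(\Qpbr)/\sigma\dcj$) are fine and amount to Lemma~\ref{lem:dbw}. The surjectivity step (3), however, has two genuine gaps. The more concrete one is at the end: having reduced to a basic class of a Levi $M$, you take \emph{any} $\dot{\tilde w}\in\N_S(\Qpbr)\cap M(\Qpbr)$ with $\kappa_M(\dot{\tilde w})=\kappa_M(b)$ and conclude it is $\sigma$-conjugate to $b$ in $M(\Qpbr)$. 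This is false: $\kappa_M$ separates only the \emph{basic} classes, and your chosen element need not be basic in $M$. For example, in $M=\GL_2$ the elements $\diag(p,p^{-1})$ and $1$ both lie in $T(\Qpbr)$ and have the same Kottwitz invariant, yet are not $\sigma$-conjugate since their Newton points differ. Repairing this requires exhibiting, for each class in $\pi_1(M)_{I_{\Qp}}$, a representative in $\N_S(\Qpbr)\cap M(\Qpbr)$ that is provably basic --- the standard candidates are lifts of length-zero elements of the extended affine Weyl group of $M$ --- and proving that these are basic with the prescribed invariant is exactly the content of He's theorem in the basic case, not routine bookkeeping.

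The second gap is the reduction itself when $G$ is not quasi-split over $\Qp$, which the paper must allow. Quasi-splitness of $G_{\Qpbr}$ lets you conjugate $\nu_b$ into $X_*(S)_\Q$, but it does not make $\nu_b$ fixed by $\sigma$. Since $\sigma^*\nu_b=b^{-1}\nu_b b$ (recalled in \S\ref{subsec:Gisoc}), one has $b\in M(\Qpbr)$ for $M=\cZ_G(\nu_b)$ if and only if $\sigma^*\nu_b=\nu_b$, and only then is $M$ automatically $\sigma$-stable with $b$ basic in it. The paper explicitly warns that for non-quasi-split $G$ the class $\nu_{[b]}$ need not contain a $\Qp$-rational cocharacter; for such $[b]$ no representative lies in its own Newton centraliser, so the statement ``$b$ is basic in a $\Qp$-rational Levi containing $S$'' is simply unavailable and the reduction must be reorganised (or one falls back on He's combinatorial argument). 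Your strategy is salvageable for quasi-split $G_{\Qp}$ once the basic-case representatives are handled correctly, but as written it does not prove the theorem in the generality the paper needs.
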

Indeed, one can obtain from \cite{He:GeomADL} a much stronger result about the existence of combinatorially nice representative of $\wt W/\sigma\dcj$. 

\begin{lemsub}\label{lem:dbw}
Let  $\tilde w\in\wt W$, and choose two lifts $\dot{\tilde w},\dot{\tilde w}'\in \N_S(\Qpbr)$. Then there exists $u\in T(\Qpbr)_1$ such that $\dot{\tilde w}' = u \dot{\tilde w} \sigma(u)\iv$. In particular, for any stabiliser $\breve\KK$ of a facet $\Omega$ in the apartment corresponding to $S_{\Qpbr}$, the $\sig$-$\breve\KK$ conjugacy class $\db{\dot{\tilde w}}$ only depends on $\tilde w$, not on the choice of $\dot{\tilde w}$.
\end{lemsub}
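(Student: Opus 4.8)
The plan is to reduce the statement to the structure of the torus $T(\Qpbr)$ together with the short exact sequence relating $T(\Qpbr)_1$, $T(\Qpbr)$, and $X_*(T)_{I_{\Qp}}$ recalled in \S\ref{subsec:GpTh}. First I would observe that any two lifts $\dot{\tilde w},\dot{\tilde w}'\in\N_S(\Qpbr)$ of the same $\tilde w\in\wt W = \N_S(\Qpbr)/T(\Qpbr)_1$ differ by an element of $T(\Qpbr)_1$, say $\dot{\tilde w}' = t\,\dot{\tilde w}$ with $t\in T(\Qpbr)_1$. Thus the lemma amounts to showing that the map $u\mapsto u\,\dot{\tilde w}\,\sigma(u)\iv\,\dot{\tilde w}\iv$ from $T(\Qpbr)_1$ to itself is surjective. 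Conjugation by $\dot{\tilde w}$ normalises $S$, hence normalises $T = \cZ_G(S)$ and also preserves $T(\Qpbr)_1$ (since the latter is intrinsic to $T$, being its unique parahoric, equivalently $\cT^\circ(\Zpbr)$); so this map is a well-defined map of $T(\Qpbr)_1$ to itself, and in fact $u\mapsto u\cdot\sigma'(u)\iv$ where $\sigma' := \Int(\dot{\tilde w})\circ\sigma$ is a Frobenius-semilinear automorphism of $T(\Qpbr)_1$.

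The key step is therefore a Lang-type surjectivity statement: for the smooth connected $\Zpbr$-group scheme $\cT^\circ$ with the semilinear Frobenius $\sigma'$, the map $u\mapsto u\,\sigma'(u)\iv$ on $\cT^\circ(\Zpbr)$ is surjective. I would prove this by the standard successive-approximation / Greenberg-functor argument: on the special fibre $\cT^\circ_{\Fpbar}$, which is a smooth connected group over $\Fpbar$ (an extension of a torus by a smooth connected unipotent group, by Bruhat-Tits theory), Lang's theorem gives surjectivity of $\bar u\mapsto \bar u\,\bar\sigma'(\bar u)\iv$; then one lifts through each infinitesimal level $\cT^\circ(\Zpbr/p^{n+1})\thra\cT^\circ(\Zpbr/p^n)$ using smoothness (the obstruction lives in a Lie-algebra coset on which the relevant additive map is again surjective by Lang applied to $\Ga$-type groups, or directly because $x\mapsto x - \sigma'(x)$ is surjective on $\Fpbar$-vector spaces as $\sigma'$ is topologically nilpotent modulo a torus part), and passes to the limit since $\cT^\circ(\Zpbr) = \varprojlim_n \cT^\circ(\Zpbr/p^n)$. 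This yields $u\in\cT^\circ(\Zpbr) = T(\Qpbr)_1$ with $u\,\sigma'(u)\iv = t$, i.e. $\dot{\tilde w}' = t\,\dot{\tilde w} = u\,\dot{\tilde w}\,\sigma(u)\iv$, as desired.

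For the ``in particular'' clause, suppose $\breve\KK$ is the stabiliser of a facet $\Omega$ in the apartment attached to $S_{\Qpbr}$. Then $\dot{\tilde w},\dot{\tilde w}'\in\N_S(\Qpbr)$ act on this apartment, and since they represent the same $\tilde w\in\wt W$ they induce the same affine transformation; moreover $T(\Qpbr)_1 = \cT^\circ(\Zpbr)\subset\breve\KK$ by Remark~\ref{rmk:Cartan} (the closed embedding $\cT^\circ\hra\cG$). Hence from $\dot{\tilde w}' = u\,\dot{\tilde w}\,\sigma(u)\iv$ with $u\in T(\Qpbr)_1\subset\breve\KK$ we get immediately that $\db{\dot{\tilde w}'} = \db{\dot{\tilde w}}$, so the $\sigma$-$\breve\KK$ conjugacy class depends only on $\tilde w$.

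The main obstacle I anticipate is the Lang-surjectivity step on $\cT^\circ(\Zpbr)$: one must be careful that $\cT^\circ$ need not be a torus (its special fibre can have a unipotent part when $T$ is not unramified), so the argument is genuinely the combination of Lang's theorem on a smooth connected algebraic group over $\Fpbar$ with a formal-smoothness lifting argument, rather than a one-line invocation of Lang for tori. Everything else is bookkeeping about lifts differing by $T(\Qpbr)_1$ and the compatibility of $T(\Qpbr)_1$ with $\Int(\dot{\tilde w})$ and with the facet stabilisers.
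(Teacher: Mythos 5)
Your proposal is correct and follows essentially the same route as the paper: reduce to the surjectivity of the twisted Lang map $u\mapsto u\,\dot{\tilde w}\sigma(u)\iv\dot{\tilde w}\iv$ on $T(\Qpbr)_1=\cT^\circ(\Zpbr)$, and deduce the ``in particular'' clause from $T(\Qpbr)_1\subset\breve\KK$. The paper compresses the surjectivity step into the single assertion that $\varphi_{\tilde w}$ is a Lang isogeny of the connected N\'eron model $\cT^\circ$ and that $\Zpbr$ is strictly henselian, which is exactly what your Lang-theorem-on-the-special-fibre plus smoothness-lifting argument makes explicit.
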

\begin{proof}
We write $\tilde w' = t\tilde w$ for some $t\in T(\Qpbr)_1$, and we want to find an element $u\in T(\Qpbr)_1$ satisfying
\[
t = u\dot{\tilde w}\sigma(u)\iv\dot{\tilde w}\iv
\]
In other words, it suffices to show that the group homomorphism $\varphi_{\tilde w}:T(\Qpbr)_1\to T(\Qpbr)_1$ defined by $\varphi_{\tilde w}(u):= u\dot{\tilde w}\sigma(u)\iv\dot{\tilde w}\iv$ is surjective. 

Note that the conjugation by $\tilde w$ on $T(\Qpbr)_1$ only depends on the image $w\in W_0$ of $\tilde w$ via the natural projection $\wt W\thra W_0$.
Since $T(\Qpbr)_1 = \cT^\circ(\Zp)$ where $\cT^\circ$ is the connected N\'eron model of $T$, $\varphi_{\tilde w}$ turns out to be a Lang isogeny of $\cT^\circ$. Therefore, $\varphi_{\tilde w}$ is induces a surjective map on $\cT^\circ(\Zpbr) = T(\Qpbr)_1$ (as $\Zpbr$ is strictly henselian).

The second claim follows from the first since we have $T(\Qpbr)_1\subset \breve\KK^\circ\subset\breve\KK$ for any $\breve\KK$ as above. 
\end{proof}

\begin{defnsub}\label{def:dbw}
For $\tilde w\in\wt W$, let $\db{\tilde w}_{\breve\KK}$ denote the $\sig$-$\breve\KK$ conjugacy class of any lift $\dot{\tilde w}\in\N_S(\Qpbr)$. By Lemma~\ref{lem:dbw}, $\db{\tilde w}$ does not depend on the choice of $\dot{\tilde w}$.

If $\breve\KK$ is understood, then we write $\db{\tilde w}$ instead of $\db{\tilde w}_{\breve\KK}$.
\end{defnsub}

\begin{lemsub}\label{lem:Decency}
	For $\tilde w\in\wt W$, there exists a lift $\dot{\tilde w}$ that satisfies a ``decency equation''
	\[
(\dot{\tilde w}\sig)^r = p^{r\nu_{\dot{\tilde w}}}\sigma^r,
\]
for some $r\in\Z$; in other words, $\db{\tilde w}_{\breve\KK}$ contains a decent element.
\end{lemsub}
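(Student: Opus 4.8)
The plan is to produce a decent lift of $\tilde w$ by a direct computation, after reducing to a purely torus-theoretic statement. Write $w\in W_0$ for the image of $\tilde w$ and choose a positive integer $r$ divisible both by the order of $\sigma$ acting on $\wt W$ and by the order of $w\rtimes\sigma$ in the finite group $W_0\rtimes\langle\sigma\rangle$, so that $w\sigma(w)\cdots\sigma^{r-1}(w)=1$ in $W_0$. For an arbitrary lift $\dot{\tilde w}_0\in\N_S(\Qpbr)$ one has, in $G(\Qpbr)\rtimes\langle\sigma\rangle$, the identity $(\dot{\tilde w}_0\sigma)^r=c\,\sigma^r$ with $c:=\dot{\tilde w}_0\sigma(\dot{\tilde w}_0)\cdots\sigma^{r-1}(\dot{\tilde w}_0)$; since the image of $c$ in $W_0$ is trivial, $c\in T(\Qpbr)$. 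Replacing $\dot{\tilde w}_0$ by $u\dot{\tilde w}_0$ with $u\in T(\Qpbr)_1$ does not change $\tilde w$ and replaces $c$ by $N(u)\,c$, where $N(u):=\prod_{i=0}^{r-1}\Ad(\dot{\tilde w}_0\sigma)^i(u)$. Since $T(\Qpbr)_1=\cT^\circ(\Zpbr)$ is abelian and stable under $\sigma$ and under $\Ad(\dot{\tilde w}_0)$ (the latter being an automorphism of $T$ depending only on $w$ and extending to $\cT^\circ$ by functoriality of the connected N\'eron model), the map $u\mapsto N(u)$ is an endomorphism of $T(\Qpbr)_1$, and it is surjective by the same Lang-isogeny argument as in the proof of Lemma~\ref{lem:dbw}. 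Hence, as $u$ ranges over $T(\Qpbr)_1$, the element $c$ ranges over the whole $\kappa_T$-fibre $c\cdot T(\Qpbr)_1$, and it is enough to exhibit a $\sigma^r$-decent element $c'$ in that fibre: for the associated lift $\dot{\tilde w}$ one then gets $(\dot{\tilde w}\sigma)^{rr'}=(c'\sigma^r)^{r'}=p^{r'\nu'}\sigma^{rr'}$, where $\nu'$ is the $\sigma^r$-Newton cocharacter of $c'$, and $r'\nu'=rr'\,\nu_{\dot{\tilde w}}$ by the standard scaling of Newton cocharacters under $\sigma\mapsto\sigma^r$; this is the required decency equation.

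It remains to treat the torus case: for $T$ a torus over $\Qp$ and any $\lambda\in X_*(T)_{I_{\Qp}}$, there is a decent $b\in T(\Qpbr)$ with $\kappa_T(b)=\lambda$. Fix any lift $b_0$; its Newton cocharacter $\nu_{b_0}\in X_*(T)\otimes\Q$ is $I_{\Qp}$-invariant (being defined over $\Qpbr$) and $\sigma^*$-fixed (since $\Ad(b_0)$ acts trivially on $T$, so $\sigma^*\nu_{b_0}=b_0^{-1}\nu_{b_0}b_0=\nu_{b_0}$), and its image in $X_*(T)_{I_{\Qp}}\otimes\Q$ equals the $\sigma$-average of $\lambda$ by the standard compatibility between the Kottwitz homomorphism and the Newton point. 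Choose $\rho$ divisible by the order of $\sigma$ on $X_*(T)_{I_{\Qp}}$ and large enough that $\rho\nu_{b_0}\in X_*(T)^{I_{\Qp}}$ (so that $p^{\rho\nu_{b_0}}\in T(\Qpbr)$ makes sense); then, writing $c_\rho:=b_0\sigma(b_0)\cdots\sigma^{\rho-1}(b_0)$, the classes $\kappa_T(c_\rho)=\sum_{i=0}^{\rho-1}\sigma^i\lambda$ and $\kappa_T(p^{\rho\nu_{b_0}})$ agree in $X_*(T)_{I_{\Qp}}\otimes\Q$, hence differ by a torsion element $\tau_\rho$ of $X_*(T)_{I_{\Qp}}$; since $\tau_{k\rho}=k\,\tau_\rho$, after enlarging $\rho$ we may assume $\tau_\rho=0$, i.e.\ $c_\rho\,p^{-\rho\nu_{b_0}}\in T(\Qpbr)_1$. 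Absorbing this into $N(u)=u\sigma(u)\cdots\sigma^{\rho-1}(u)$ as above yields $b:=u b_0$ with $\kappa_T(b)=\lambda$ and $b\sigma(b)\cdots\sigma^{\rho-1}(b)=p^{\rho\nu_{b_0}}=p^{\rho\nu_b}$ (the Newton cocharacter being unchanged by the abelian modification), that is, $(b\sigma)^\rho=p^{\rho\nu_b}\sigma^\rho$.

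Since the decent lift so constructed is a lift of $\tilde w$, it lies in $\db{\tilde w}_{\breve\KK}$ by Lemma~\ref{lem:dbw} (using $T(\Qpbr)_1\subset\breve\KK^\circ\subset\breve\KK$), which gives the second assertion. I expect the main obstacle to be the bookkeeping in the torus case, i.e.\ arranging that the ``defect'' $c_\rho\,p^{-\rho\nu_{b_0}}$ lies in $T(\Qpbr)_1$ exactly --- this rests on the $\kappa$-versus-$\nu$ compatibility together with the observation $\tau_{k\rho}=k\,\tau_\rho$, which lets one kill the residual torsion in $X_*(T)_{I_{\Qp}}$ --- whereas the surjectivity of the twisted-norm maps $N$ is a routine repetition of the Lang-isogeny argument already used in Lemma~\ref{lem:dbw}.
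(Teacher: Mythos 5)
Your proof is correct and follows essentially the same strategy as the paper's: raise to an $r$-th power so that $(\dot{\tilde w}\sigma)^r\in T(\Qpbr)\sigma^r$, use $\kappa_T$ to identify the decent representative $p^{\lambda}$ of the resulting coset of $T(\Qpbr)_1$ after enlarging $r$ to kill the torsion in $X_*(T)_{I_{\Qp}}$ (your $\tau_{k\rho}=k\tau_\rho$ step is the paper's ``replace $r$ by a multiple so that $\kappa_T(t)\in X_*(S)$''), and absorb the leftover element of $T(\Qpbr)_1$ by a surjectivity argument on $\cT^\circ(\Zpbr)$. The only substantive difference is that you adjust the lift by left translation, so the map whose surjectivity you need is the twisted norm $N=\sum_{i=0}^{r-1}F^i$ rather than the Lang map $u\mapsto u\,\sigma^r(u)\iv$ used in the paper; $N$ is not itself a Lang isogeny, so ``the same Lang-isogeny argument'' should be expanded — either note that $dN=\mathrm{id}$ on Lie algebras (the Frobenius terms die), so $N$ is an \'etale isogeny of the connected Greenberg transforms, or observe that in additive notation $N\circ(1-F)=1-F^r$, which reduces surjectivity of $N$ to that of the Lang map for $\sigma^r$ already established in Lemma~\ref{lem:dbw}.
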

\begin{proof}
%
Since $W_0$ is finite and $\Gal(\Qpbr/\Qp)=\wh{\langle\sig\rangle}$ acts on $W_0$ through a finite quotient, there exists $r$ such that for any $w\in W_0$ we have $(w\sig)^r = \sigma^r$ in $W_0\rtimes \langle\sig\rangle$. Therefore for any $\dot{\tilde w}\in \N_S(\Qpbr)$ we have $(\dot{\tilde w}\sig)^r\in T(\Qpbr)\sigma^r$. We choose $r$ to also satisfy that $\cS^\circ$ splits over $\Z_{p^r}\subset\Zpbr$. Let $t\in T(\Qpbr)$ denote the element satisfying $t\sigma^r = (\dot{\tilde w}\sig)^r$.

Since $S_{\Qpbr}$ is the split part of $T_{\Qpbr}$, it follows that $X_*(S)$ is precisely the torsion-free part of $X_*(T)_{I_{\Qp}}$.
So by replacing $r$ with some suitable multiple, we may assume that the image of $t$ via $\kappa_T:T(\Qpbr)\thra X_*(T)_{I_{\Qp}}$ lies in $X_*(S)$. If we write $\lambda:=\kappa_T(t)\in X_*(S)$, then we may write $t=p^\lambda t_1$ for $t_1\in T(\Qpbr)_1$.

Now, by repeating the proof of Lemma~\ref{lem:dbw} we can find $u\in T(\Qpbr)_1$ such that $\sigma^r(u)u\iv = t_1\iv$.
For $u$ as above, we set $\dot{\tilde w}':=u\iv \dot{\tilde w} \sigma(u)$, which satisfies the following:
\[(\dot{\tilde w}'\sig)^r = (u\iv t \sigma^r(u)) \sigma^r = (t_1\iv t) \sigma^r = p^\lambda \sigma^r.\]
From this, it clearly follows that $\lambda = r\nu_{\dot{\tilde w}'}$; in particular, $\dot{\tilde w}'$ is decent.
\end{proof}

\subsection{$G$-(iso)crystals and virtual $F$-crystals with tensors}\label{subsec:VirCrys}
Let $R$ be either a field of characteristic zero or a discrete valuation ring of mixed characteristic. In practice, $R$ will be one of $\Q$, $\Z_{(p)}$, and $\Zp$.
Let $\cG$ be a smooth affine group scheme over $R$ such that the generic fibre is a reductive group. 
Let $M$ be a  free $R$-module of finite rank, and we fix a closed immersion of group schemes $\cG\hra \GL_R(M)$.

\begin{propsub}\label{prop:Chevalley}
In the above setting, there exists a finitely many elements $s_\alpha\in M^\otimes$ such that $\cG$ is the pointwise stabiliser of $(s_\alpha)$; i.e., for any $R$-algebra $R'$, we have
\[\cG(R') = \set{g\in \GL_R(M)(R'); \ g(s_\alpha) = s_\alpha \ \forall \alpha}.\]
\end{propsub}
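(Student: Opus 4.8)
The plan is to recover $\cG$ as a stabilizer inside $\GL_R(M)$ using the well-known tensor-construction attributed to Chevalley (as in Deligne's "Hodge cycles on abelian varieties" or Kisin's integral models paper). The starting point is the fact that any closed subgroup scheme $\cG \hra \GL_R(M)$ with $\cG$ smooth affine can be described as the scheme-theoretic stabilizer of a suitable line or submodule inside some finite direct sum of tensor constructions on $M$. First I would recall that the closed immersion $\cG \hra \GL_R(M)$ corresponds to a surjection of Hopf algebras $\OO(\GL_R(M)) \thra \OO(\cG)$ with kernel an ideal $\cJ$. Because the generic fibre is reductive and $R$ is either a field of characteristic $0$ or a DVR of mixed characteristic (so in particular $R$ is Noetherian and $\GL_R(M)$ acts on $\OO(\GL_R(M))$ locally finitely), one can find a finitely generated $\GL_R(M)$-stable $R$-submodule $W \subset \OO(\GL_R(M))$ that generates $\cJ$ as an ideal, and then $\cG$ is the stabilizer of the submodule $W \cap \cJ \subset W$.

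The next step is to pass from "stabilizer of a submodule $W_0 := W\cap\cJ$ of a finite $\GL_R(M)$-representation $W$" to "stabilizer of a line in a one-dimensional exterior power." Here one sets $d := \rk_R W_0$ (taking care: over a DVR one should choose $W$ and $W_0$ so that $W_0$ is a direct summand of $W$, which one can arrange by saturating) and considers the line $L := \bigwedge^d W_0 \subset \bigwedge^d W$. Then $\cG$ is exactly the subgroup fixing the line $L$. Since $\OO(\GL_R(M))$ is built out of $M$ and $M^\vee$ and $\det(M)^{-1}$, every such $W$ and hence $\bigwedge^d W$ embeds $\GL_R(M)$-equivariantly into a finite direct sum of spaces of the form $M^{\otimes a}\otimes (M^\vee)^{\otimes b}$, twisted by powers of $\det$; absorbing the determinant twists (which $\GL(M)$ acts on by a character, and which can be killed by tensoring with a suitable dual power of $\det$ that lies in $M^\otimes$ in the sense of the paper's notation $M^\otimes = \bigoplus M^{\otimes a}\otimes(M^\vee)^{\otimes b}$), one realizes $L$ as a line inside $M^\otimes$. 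Finally, replacing "stabilizer of a line $L$" by "pointwise stabilizer of a finite set of tensors": choose a generator $s_0$ of $L$ (over a DVR $L$ is free of rank one); the subgroup fixing $L$ acts on $L$ by a character $\chi\colon \cG \to \Gm$, and $\cG = \ker\chi \cap (\text{stab of } L)$. To cut out $\ker\chi$ one adds finitely many more tensors — e.g. realize $\chi$ via another line in $M^\otimes$ on which $\cG$ acts by $\chi^{-1}$ (using $\det$ again, since $\GL(M)$ already sees every character through powers of $\det$), and take $s_0$ tensored appropriately so that $\cG$ stabilizes the resulting tensor \emph{pointwise}. Collecting these finitely many tensors gives the desired family $(s_\alpha)$.

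The main obstacle is the integral (DVR) case: over a field of characteristic zero this is classical, but over a DVR of mixed characteristic one must be careful that (i) the submodule $W_0 \subset W$ can be taken to be a direct summand — this uses that one can saturate, at the cost of checking that the stabilizer of a saturated submodule agrees with $\cG$ on the nose, which one verifies after base change to the fraction field and to the residue field using that $\cG$ is flat (indeed smooth) over $R$ and using the reductivity/flatness to rule out a jump in the special fibre; and (ii) the passage through exterior powers and determinant twists stays within free modules. In practice the cleanest route is to cite the relevant statement — this is exactly the content of the proposition invoked in Kisin's and Kisin--Pappas's work, and I would quote it in the form stated there (the argument above being the standard one, going back to Chevalley's theorem on representability of $G/H$, adapted by Deligne and by Kisin to the integral setting). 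So the proof is: reduce to the line-stabilizer statement via local finiteness of the $\GL(M)$-action on $\OO(\GL_R(M))$, saturate over $R$ to get a direct summand, take top exterior power to get a line in $M^\otimes$, and then pointwise-rigidify the line using an auxiliary determinant-twist tensor; the only delicate point is the flatness/saturation bookkeeping over the DVR.
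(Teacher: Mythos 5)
Your proposal lands where the paper does: the paper's entire proof is the two citations you name at the end --- the field case is \cite[Proposition~3.1]{Deligne:AbsHodge} and the DVR case is \cite[Proposition~1.3.2]{Kisin:IntModelAbType} --- so as a proof of the proposition your attempt is correct and identical in substance to the paper's.

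Since you also sketch the argument behind those citations, one step of the sketch deserves a warning: the mechanism you propose for rigidifying the line $L=\bigwedge^d W_0$ does not work as stated. You suggest killing the character $\chi$ by which $\cG$ acts on $L$ with a power of $\det$, ``since $\GL(M)$ already sees every character through powers of $\det$.'' That is true of characters of $\GL_R(M)$ itself, but $\chi$ is a character of the subgroup $\cG$, and characters of a subgroup are in general neither restrictions of powers of $\det$ nor extendable to $\GL_R(M)$ (take $\cG$ a maximal torus and $L$ a weight line). The standard repair --- and the place where reductivity of the generic fibre genuinely enters --- is the $L\otimes L^\vee$ trick: one realizes $L^\vee$ as a $\cG$-stable line inside $W^\vee$ (in characteristic $0$ this uses semisimplicity of $\cG$-representations to split off $L$; over the DVR it is exactly the saturation/complement bookkeeping you allude to, carried out in \cite{Kisin:IntModelAbType}), and then checks that the pointwise stabiliser of the tensor $\ell\otimes\ell^\vee\in W\otimes W^\vee\subset M^\otimes$ coincides with the stabiliser of the line $L$, because as a rank-one endomorphism of $W$ its image recovers $L$. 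Since you ultimately defer to the references for the careful version, this does not invalidate your proof, but the sketch as a standalone argument would break at that point.
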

\begin{proof}
The case when $R$ is a field is proved in \cite[Proposition~3.1]{Deligne:AbsHodge}, and the case of discrete valuation rings
is proved in \cite[Proposition~1.3.2]{Kisin:IntModelAbType}.
\end{proof}

We now return to the setting introduced in Definition~\ref{def:KP}.

Choose a finite free $\Zp$-module $\Lambda$ equipped with a faithful $\cG$-action (i.e., a closed embedding $\cG \hra \GL(\Lambda)$ of algebraic groups over $\Zp$). Fixing such a datum, we can choose finitely many tensors $(s_\alpha)\subset \Lambda^\otimes$ defining $\cG$ as a subgroup of $\GL(\Lambda)$ by Proposition~\ref{prop:Chevalley}.

For any $b\in G(\Qpbr) = \cG(\Qpbr)$, we obtain the following virtual $F$-crystal over $\Fpbar$
\begin{equation}\label{eqn:bM}
\bM_b (=\bM_b^\Lambda):= (\Zpbr\otimes_{\Zp}\Lambda, b\sig).
\end{equation}
(Here, we identify $\cG$ as a subgroup of $\GL(\Lambda)$.)
In the intended setting, $\bM_b$ will be assumed to be the dual of the contravariant Dieudonn\'e module of a $p$-divisible group $\BX_b$ over $\Fpbar$ and $\Lambda$ will be isomorphic to the Tate module of a suitable $\Zpbr$-lift of $\BX_b$. (In other words, one can identify $\bM_b$ as the underlying $\Zpbr$-module for the covariant Dieudonn\'e module of $\BX_b$, and $b\sigma = p\iv F$ where $F$ is the covariant crystalline Frobenius operator. In particular, $\bM_b$ is not stable under $b\sig$ unless it is \'etale. The reason for this normalisation is to identify $\bM_b$ as the first crystalline \emph{homology} without Tate twist.)

Let $\triv:=(\Zpbr,\sig)$ denote the trivial (virtual) $F$-crystal.
For each $s_\alpha$, let us consider the following $\Zpbr$-linear map
\begin{equation}\label{eqn:bfs}
\triv\ra\bM_b^\otimes = \Zpbr\otimes_{\Zp}\Lambda^\otimes;\ 1\mapsto 1\otimes s_\alpha.
\end{equation}
Since (the image of) $\cG$ in $\GL(\Lambda)$ is the pointwise stabiliser of $(s_\alpha)$, it follows that (\ref{eqn:bfs}) is a map of virtual $F$-crystals for each $\alpha$ (i.e., a $\Zpbr$-linear morphism which induces a morphism of $F$-isocrystals after inverting $p$).

Let $b'\in G(\Qpbr)$. Then clearly, we have $b'\in [b]$ if and only if $[\bM_b[\ivtd p],(s_\alpha)]\cong [\bM_{b'}[\ivtd p],(s_\alpha)]$ (i.e., there exists an isomorphism $\bM_b[\ivtd p]\cong \bM_{b'}[\ivtd p]$ of $F$-isocrystals which fixes each $ s _\alpha$). Similarly, $b'$ lies in the $\sig$-$\breve\KK$ conjugacy class $\db b$  if and only if we have $[\bM_b,(s_\alpha)]\cong [\bM_{b'},(s_\alpha)]$.
%
%
%

\subsection{Completely slope divisible $G$-crystals}
The following definition is an analogue of completely slope divisible $p$-divisible groups over $\Fpbar$. We will not directly work with this definition, but we will use it as a motivation for regarding $\db{\tilde w}$ as a ``completely slope divisible virtual $F$-crystals with tensors''; \emph{cf.} Lemma~\ref{lem:AffWeyl}.
\begin{defnsub}\label{def:SlopeDiv}
An element $b\in \cG(\breve{\Q}_p)$  is called \emph{completely slope divisible} if  $b$ is decent and $\nu_{b}:\bD\ra  G_{\breve{\Q}_p}$ is the base change of a homomorphism $\bD\ra\cG_{\Zpbr}$ over $\Zpbr$. A $\sig$-$\breve\KK$ conjugacy class $\db b$ is called \emph{completely slope divisible} if it contains a completely slope divisible element.
\end{defnsub}

The motivation of Definition~\ref{def:SlopeDiv} is the following (more standard) definition of completely slope divisible $p$-divisible groups:
\begin{defnsub}\label{def:SlopeDivBT}
Let $\BX$ be a $p$-divisible group over $\Fpbar$ with height $n$.  We say that $\BX$ is \emph{completely slope divisible} if it admits a  filtration
\[
0= \BX_0 \subsetneq \BX_1 \subsetneq \cdots\subsetneq \BX_m = \BX
\]
such that for some integer $r>0$ and strictly decreasing sequence of  integers $a_i\in[0,r]$ the quasi-isogeny $p^{-a_i}\Fr^{r}:\BX_i \ra (\BX_i)\com{p^r}$ is an isogeny inducing an isomorphism $\BX_i/\BX_{i-1} \riso (\BX_i/\BX_{i-1})\com{p^r}$. We call $\set{\lambda_i:=a_i/r}$ the slopes of $\BX$.

We write $\BX\com i:= \BX_i/\BX_{i-1}$. Since the slope filtration has a canonical splitting over a perfect base, we may write $\BX = \prod_i\BX\com i$;  \emph{cf.} \cite[Corollary~11]{Zink:SlopeFil}.
\end{defnsub}
Recall that the isomorphism class of $\BX$ corresponds to certain $\sig$-$\GL_n(\Zpbr)$ conjugacy class $\db b$.
One can see that $\BX$ is completely slope divisible if and only if $\db b$ is completely slope divisible. Indeed, if $\BX$ is completely slope divisible, then we choose a $\Zpbr$-basis of $\DD(\BX)(\Zpbr)^*$ adapted to the slope decomposition $\BX = \prod_i\BX\com i$, and require that the basis of $\DD(\BX^{(i)})(\Zpbr)^*$ is fixed by $p^{-a_i}\Fr^r$. Then, the resulting matrix $b\in \GLn(\Qpbr)$ is clearly decent and $\nu_b$ is defined over $\Zpbr$ (since the slope decomposition is defined over $\Zpbr$). Conversely, if $b\in\GL_n(\Qpbr)$ is completely slope divisible, then the slope decomposition  of the $F$-isocrystal $\bM_b[\ivtd p]$ restricts to the slope decomposition of $\bM_b$, which allow us to write $\BX_b = \prod \BX_b^{(i)}$. Now, the decency equation for $b$ implies that there exists $r$ and $a_i$ such that the quasi-isogeny $p^{-a_i}\Fr^r:\BX_b^{(i)} \to (\BX_b^{(i)})^{(p^r)}$ is an isomorphism.

Now let $\db b$ denote a $\sig$-$\breve\KK$ conjugacy class of $b\in G(\Qpbr)$, and we choose $\cG\hra \GL_n$ such that $\db b$ corresponds to an isomorphism class of $p$-divisible groups of height~$n$ with tensors $(\BX, (s_\alpha))$. Then if $\db b$ is completely slope divisible, then $\BX$ is completely slope divisible, and the tensors $(s_\alpha)$ are  ``well behaved'' with respect to the slope decomposition.

\begin{lemsub}\label{lem:AffWeyl}
We use the notation from Definition~\ref{def:AffWeyl}.
For any $\tilde w\in\wt W$, any decent lift $\dot{\tilde w}$ of $\tilde w$ is completely slope divisible in the sense of Definition~\ref{def:SlopeDiv}. In particular,
the $\sig$-$\breve\KK$ conjugacy class $\db{\tilde w}$ (as in Definition~\ref{def:dbw}) is completely slope divisible. 
\end{lemsub}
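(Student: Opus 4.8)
The plan is to show that a decent lift $\dot{\tilde w}$ is completely slope divisible by exhibiting a cocharacter $\bD\to\cT^\circ_{\Zpbr}\hra\cG^\circ_{\Zpbr}$ whose base change to $\breve{\Q}_p$ is $\nu_{\dot{\tilde w}}$. First I would recall from Lemma~\ref{lem:Decency} that we may pick a decent lift, so the only thing left to prove is that $\nu_{\dot{\tilde w}}$ is defined over $\Zpbr$ in the sense of Definition~\ref{def:SlopeDiv}. The key observation is that $\nu_{\dot{\tilde w}}$ factors through the maximal torus $T$: indeed, by the decency equation $(\dot{\tilde w}\sig)^r = p^{r\nu_{\dot{\tilde w}}}\sigma^r$, the element $p^{r\nu_{\dot{\tilde w}}}$ lies in $T(\Qpbr)$ (it is the $T(\Qpbr)$-part of $(\dot{\tilde w}\sig)^r$ appearing in the proof of Lemma~\ref{lem:Decency}, where we wrote $t = p^\lambda t_1$ with $\lambda=r\nu_{\dot{\tilde w}'}\in X_*(S)$). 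Hence $r\nu_{\dot{\tilde w}}$, a priori a cocharacter of $G_{\breve\Q_p}$, is conjugate to — and in fact, by uniqueness of the Newton cocharacter and the fact that it commutes with $\dot{\tilde w}\sig$, literally equal to — a cocharacter of $T_{\breve\Q_p}$; more precisely $\lambda \in X_*(S)\subset X_*(T)$, so $\nu_{\dot{\tilde w}}$ is the $\bD$-cocharacter of $T$ determined by $\tfrac1r\lambda\in X_*(T)_\Q$.

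Next I would use Remark~\ref{rmk:Cartan} to descend this to the integral level. Since $\lambda\in X_*(S)$ and $\cS^\circ$ is a split torus over $\Zpbr$ (after enlarging $r$ as in the proof of Lemma~\ref{lem:Decency} we may even assume it splits over $\Z_{p^r}$), the cocharacter $p^\lambda:\Gm\to S$ extends to $\Gm\to\cS^\circ$ over $\Zpbr$. By the closed immersions $\cS^\circ\hra\cT^\circ\hra\cG^\circ$ of Remark~\ref{rmk:Cartan} (from \cite[Proposition~4.6.4]{BruhatTits:RedGp2}), this gives a homomorphism of $\Zpbr$-group schemes $\Gm\to\cG^\circ_{\Zpbr}$, and passing to the inverse limit along the $N$-th power maps produces $\bD\to\cG^\circ_{\Zpbr}\hra\cG_{\Zpbr}$ whose generic fibre is $\nu_{\dot{\tilde w}}$. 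This is exactly the condition in Definition~\ref{def:SlopeDiv}, so $\dot{\tilde w}$ is completely slope divisible. The "in particular" clause — that $\db{\tilde w}$ is completely slope divisible — is then immediate from Definition~\ref{def:SlopeDiv} together with Lemma~\ref{lem:Decency}, which guarantees that $\db{\tilde w}$ contains a decent (hence, by the above, completely slope divisible) representative; and the independence of $\db{\tilde w}$ on the lift is Lemma~\ref{lem:dbw}.

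I expect the only subtle point to be the identification $\nu_{\dot{\tilde w}} = \tfrac1r\lambda$ as cocharacters valued in $T$ rather than merely up to $G(\breve\Q_p)$-conjugacy: one has to check that the Newton cocharacter attached to $\dot{\tilde w}$ actually lands in $T$, not just in some maximal torus. This follows because $\nu_{\dot{\tilde w}}$ is characterized by the slope decomposition of $(\breve\Q_p^n,\rho(\dot{\tilde w})\sig)$ for any representation $\rho$, and the decency equation $(\dot{\tilde w}\sig)^r = p^{r\nu_{\dot{\tilde w}}}\sigma^r$ forces $p^{r\nu_{\dot{\tilde w}}}$ to equal the element $t=p^\lambda t_1$ computed in Lemma~\ref{lem:Decency} up to the unipotent-type correction $t_1\in T(\Qpbr)_1$ which does not affect the slope; comparing with $p^\lambda$, whose associated cocharacter visibly lies in $X_*(S)\subset X_*(T)$, pins down $\nu_{\dot{\tilde w}}$ inside $X_*(T)_\Q$. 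Once that is in hand the rest is formal, using only the cited structure theory of Bruhat--Tits group schemes.
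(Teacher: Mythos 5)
Your proposal is correct and follows essentially the same route as the paper: choose a decent lift via Lemma~\ref{lem:Decency}, observe (using the decency equation and the computation in that lemma's proof) that $\nu_{\dot{\tilde w}}$ factors through $S_{\Qpbr}$, hence extends to a rational cocharacter $\bD\to\cS^\circ_{\Zpbr}\hra\cG^\circ_{\Zpbr}$ by Remark~\ref{rmk:Cartan}, which is exactly Definition~\ref{def:SlopeDiv}. The only cosmetic slip is calling $t_1\in T(\Qpbr)_1$ a ``unipotent-type correction''---it is a bounded torus element, not unipotent---but your point that it does not affect slopes is what matters.
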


\begin{proof}
Let us choose a \emph{decent} lift $\dot{\tilde w}$, which exists by Lemma~\ref{lem:Decency}. The proof of Lemma~\ref{lem:Decency} also shows that $\nu_{\dot{\tilde w}}:\bD \ra G_{\Qpbr}$ factors through $S_{\Qpbr}$, so it extends to a rational cocharacter $\bD \ra \cS^\circ_{\Zpbr}$ over $\Zpbr$.\footnote{By replacing $\nu_{\dot{\tilde w}}$ with $r\nu_{\dot{\tilde w}}$ for some $r$, the claim reduces to a familiar statement on  cocharacters of split tori.} Since $\cS^\circ$ is a torus in $\cG^\circ$, it follows that $\nu_{\dot{\tilde w}}$ is defined over $\Zpbr$. Therefore, $\dot{\tilde w}$ is completely slope divisible, and so the same holds for its $\sigma$-$\breve\KK$ conjugacy class $\db{\tilde w}$.
\end{proof}


Recall that we have a natural bijection $\wt W_{\breve\KK^\circ}\backslash \wt W/\wt W_{\breve\KK^\circ} \cong \breve\KK^\circ\backslash G(\Qpbr)/\breve\KK^\circ$ given by $\wt W_{\breve\KK^\circ}\tilde w\wt W_{\breve\KK^\circ}\mapsto \breve\KK^\circ\dot{\tilde w}\breve\KK^\circ$, where $\dot{\tilde w}$ is any lift of $\tilde w$; \emph{cf.} Remark~\ref{rmk:AffWeyl}. For a dominant cocharacter $\nu\in X_*(T)_+$ defined over $\ol{\breve\Q}_p$ (where dominance is with respect to the choice of a suitable borel subgroup), one can associate a subset $\Adm_{\breve\KK^\circ}(\nu)\subset \wt W_{\breve\KK^\circ}\backslash \wt W/\wt W_{\breve\KK^\circ}$; \emph{cf.} \cite[(3.6)]{Rapoport:GuideRednShVar}. 

\begin{defnsub}
For $b\in G(\breve{\Q}_p)$ and $\nu\in X_*(T)_+$, we consider an \emph{affine Deligne-Lusztig variety of level $\breve\KK^\circ$}, defined as follows
\[
X_{\breve\KK^\circ}(b;\nu):=\bigcup_{\tilde w \in \mathrm{Adm}_{\breve\KK^\circ}(\nu)}\set{g\in G(\breve{\Q}_p)|\ g\iv b \sigma(g) \in \breve\KK^\circ\dot{\tilde w} \breve\KK^\circ}/\breve\KK^\circ \subset G(\breve{\Q}_p)/\breve\KK^\circ.
\]
And we let $X_{\breve\KK}(b;\nu)\subset G(\Qpbr)/\breve\KK$ denote the image of $X_{\breve\KK^\circ}(b;\nu)$ under the natural projection, and call it an \emph{affine Deligne-Lusztig variety of level $\breve\KK$}. 

Note that for any $g\in G(\breve{\Q}_p)$, the $\sig$-$\breve\KK^\circ$ conjugacy class  $\db{g\iv b\sigma(g)}_{\breve\KK^\circ}$ only depends on the right coset $g\breve\KK^\circ$, and the same assertion holds if we replace $\breve\KK^\circ$ with $\breve\KK$. We say that $g\breve\KK\in X_{\breve\KK}(b;\nu)$ is a \emph{completely slope divisible point} if $\db{g\iv b\sigma(g)}_{\breve\KK}$ is completely slope divisible.
\end{defnsub}

The following proposition can be thought of as the slope filtration theorem for  affine Deligne-Lusztig varieties. 
\begin{propsub}\label{prop:SlopeFil}
For $b\in G(\breve{\Q}_p)$ and $\nu\in X_*(T)_+$, assume that $X_{\breve\KK^\circ}(b;\nu)$ is non-empty. Then 
there exists $g\breve{\KK}^\circ \in X_{\breve\KK^\circ}(b;\nu)$ such that $\db{g\iv b \sigma(g)}_{\breve{\KK}^\circ} =  \db{\tilde w}_{\breve{\KK}^\circ}$ for some $\tilde w\in\widetilde W$. In particular, $X_{\breve\KK^\circ}(b;\nu)$ admits a completely slope divisible point. 

The same statement holds for $X_{\breve\KK}(b;\nu)$.
\end{propsub}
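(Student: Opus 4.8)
The plan is to reduce the statement to a concrete assertion about $\sig$-conjugacy classes in $\wt W$ and then invoke the combinatorics of the admissible set together with X.~He's results. First I would recall that, by Theorem~\ref{thm:He}, every $\sig$-$G(\Qpbr)$ conjugacy class meeting $\breve\KK^\circ\dot{\tilde w}\breve\KK^\circ$ for some $\tilde w\in\Adm_{\breve\KK^\circ}(\nu)$ is represented by a $\sig$-conjugacy class $[\tilde w_0]$ in $\wt W$; the nonemptiness of $X_{\breve\KK^\circ}(b;\nu)$ means precisely that $[b]$ is one such class. The key point I would isolate is a combinatorial lemma: the subset $\Adm_{\breve\KK^\circ}(\nu)\subset \wt W_{\breve\KK^\circ}\backslash\wt W/\wt W_{\breve\KK^\circ}$ is stable under $\sig$-conjugacy in an appropriate sense, so that if $[b]$ lies in the image of $\bigcup_{\tilde w\in\Adm_{\breve\KK^\circ}(\nu)}[\dot{\tilde w}]$, then one can choose the representative $\tilde w$ of the relevant $\sig$-conjugacy class to lie in $\Adm_{\breve\KK^\circ}(\nu)$ \emph{itself} (not merely $\sig$-conjugate to an element of it).

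Granting this, the argument runs as follows. Since $X_{\breve\KK^\circ}(b;\nu)\ne\emptyset$, pick any $g_0\breve\KK^\circ$ in it, so $g_0\iv b\sigma(g_0)\in\breve\KK^\circ\dot{\tilde w}\breve\KK^\circ$ for some $\tilde w\in\Adm_{\breve\KK^\circ}(\nu)$. By Theorem~\ref{thm:He} applied inside the reductive-group-theoretic picture, the $\sig$-$G(\Qpbr)$ conjugacy class $[b]=[g_0\iv b\sigma(g_0)]$ equals $[\dot{\tilde w}_1]$ for some $\tilde w_1\in\wt W$ with $\tilde w_1$ $\sig$-conjugate to $\tilde w$ in $\wt W$ (here I would use that the fibres of $\wt W/\sig\dcj\thra G(\Qpbr)/\sig\dcj$ over a class meeting a given double coset are controlled, e.g.\ via the standard reduction to $\sigma$-straight elements in \cite{He:GeomADL}). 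Then $[b]=[\dot{\tilde w}_1]$ means there is $h\in G(\Qpbr)$ with $h\iv b\sigma(h)=\dot{\tilde w}_1$; writing $g:=h$ we get $g\iv b\sigma(g)=\dot{\tilde w}_1\in\breve\KK^\circ\dot{\tilde w}_1\breve\KK^\circ$, and by the combinatorial lemma $\tilde w_1\in\Adm_{\breve\KK^\circ}(\nu)$, so $g\breve\KK^\circ\in X_{\breve\KK^\circ}(b;\nu)$. By construction $\db{g\iv b\sigma(g)}_{\breve\KK^\circ}=\db{\dot{\tilde w}_1}_{\breve\KK^\circ}=\db{\tilde w_1}_{\breve\KK^\circ}$, which by Lemma~\ref{lem:AffWeyl} is completely slope divisible; this gives the completely slope divisible point. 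Finally, the statement for $X_{\breve\KK}(b;\nu)$ follows by pushing forward along the natural projection $G(\Qpbr)/\breve\KK^\circ\thra G(\Qpbr)/\breve\KK$, using that $\db{\cdot}_{\breve\KK}$ is the image of $\db{\cdot}_{\breve\KK^\circ}$ and that complete slope divisibility is inherited (it is a statement about the decent representative $\dot{\tilde w}_1$ and $\nu_{\dot{\tilde w}_1}$ extending to $\cS^\circ_{\Zpbr}$, which does not see the level).

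The main obstacle I expect is the combinatorial lemma that $\Adm_{\breve\KK^\circ}(\nu)$ can be chosen $\sig$-conjugacy-stable, i.e.\ that one may realise $[b]$ by a representative $\tilde w_1$ \emph{in} the admissible set rather than only $\sig$-conjugate to one. The cleanest route is to appeal to the theory of $\sigma$-straight elements: in each $\sig$-conjugacy class of $\wt W$ there is a $\sigma$-straight element of minimal length, and by \cite{He:GeomADL} (or its parahoric refinements) a $\sig$-conjugacy class $[b]$ with $X_{\breve\KK^\circ}(b;\nu)\ne\emptyset$ contains a $\sigma$-straight $\tilde w$ whose double coset already lies in $\Adm_{\breve\KK^\circ}(\nu)$. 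One then has to check compatibility of the admissibility condition at level $\breve\KK^\circ$ versus level $\breve\KK$, which is routine given the bijection \eqref{eqn:CartanDecomp} and the definition of $\Adm$ in \cite{Rapoport:GuideRednShVar}. If instead one wants to avoid the full strength of He's classification, one can argue directly: the nonemptiness criterion already pins $[b]$ down to a unique class in the image of $\Adm_{\breve\KK^\circ}(\nu)$, and since that image is by definition a union of full $\sig$-conjugacy classes of double cosets, any representative of $[b]$ coming from $\wt W$ automatically has its double coset in $\Adm_{\breve\KK^\circ}(\nu)$; making this last sentence precise (that the image is a union of \emph{full} $\sig$-classes) is really the crux and is where I would spend the effort.
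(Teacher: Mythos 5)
Your proposal follows essentially the same route as the paper: the entire content is the combinatorial fact that non-emptiness of $X_{\breve\KK^\circ}(b;\nu)$ forces $[b]$ to contain a lift $\dot{\tilde w}$ of some $\tilde w$ whose double coset lies in $\Adm_{\breve\KK^\circ}(\nu)$, and the paper obtains this by citing \cite[Corollary~2.9]{He:KottwitzRapoportConj} (He's proof of the Kottwitz--Rapoport conjecture, which indeed goes through $\sigma$-straight elements as in your first suggested route), after which complete slope divisibility is Lemma~\ref{lem:AffWeyl} exactly as you say. One caution: your ``direct'' fallback at the end does not work --- $\Adm_{\breve\KK^\circ}(\nu)$ is \emph{not} a union of full $\sigma$-conjugacy classes of $\wt W$ (e.g.\ translations $t^\lambda$ with the same image in $B(G)$ need not all be admissible), so a representative $\tilde w_1$ of $[b]$ in $\wt W$ need not automatically be admissible, and one genuinely needs He's theorem to choose it so.
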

Note that for $\nu'\leq \nu$, we have $\Adm_{\breve\KK^\circ}(\nu')\subseteq \Adm_{\breve\KK^\circ}(\nu)$, so $X_{\breve\KK}(b;\nu') \subseteq X_{\breve\KK}(b;\nu)$. Later, we will only consider the case when $\nu$ is minuscule.

\begin{proof}
It suffices to prove the assertion for $X_{\breve\KK^\circ}(b;\nu)$, which follows from  \cite[Corollary~2.9]{He:KottwitzRapoportConj}. Indeed, assuming that $X_{\breve\KK^\circ}(b;\nu)$ is non-empty, the aforementioned result implies that $[b]$ contains a lift $\dot{\tilde w}\in\N_S(\Qpbr)$ of $\tilde w\in\wt W$ such that $\wt W_{\breve\KK^\circ}$-double coset of $\tilde w$ belongs to $\mathrm{Adm}_{\breve\KK^\circ}(\nu)$. Therefore, there exists $g\breve{\KK}^\circ \in X_{\breve\KK^\circ}(b;\nu)$ such that $\db{g\iv b \sigma(g)}_{\breve{\KK}^\circ} =  \db{\tilde w}_{\breve{\KK}^\circ}$, which is completely slope divisible by Lemma~\ref{lem:AffWeyl}.
\end{proof}

\subsection{Affine Deligne-Lusztig varieties and $p$-divisible groups}
In this subsection, we want to show that under some suitable assumptions we can interpret $X_{\breve\KK}(b;\nu)$ as the set of self quasi-isogenies of some $p$-divisible group over $\Fpbar$ (\emph{cf.} Remark~\ref{rmk:EmbADLV}). We begin with the example of $\GLn$.
\begin{exasub}\label{exa:SlopeFil}
Let $G:=\GL_n$ and $ \breve\KK^\circ := \GL_n(\Zpbr)(=\breve\KK)$. Choose $b\in\GL_n(\Qpbr)$ so that we have a $p$-divisible group $\BX_b$ with $\big(\DD(\BX_b)(\Zpbr)\big)^* \cong \bM_b:=(\Zpbr^n,b\sigma)$ as a virtual $F$-crystal. Then it turns out that $[b]$ is neutral acceptable for a minuscule cocharacter $\nu_d$ given by $t \mapsto \diag(\underbrace{1,\cdots,1}_{n-d},\underbrace{ t\iv,\cdots,t\iv}_{d})$, where $d$ is the dimension of $\BX_b$. (This is a group-theoretic interpretation of Mazur's inequality; \emph{cf.}  \cite[Theorem~4.2]{RapoportRichartz:Gisoc}.) Now we can interpret $X_{\GL_n(\Zpbr)}(b;\nu_d)$ as the set of equivalence classes of pairs $(\BY, \iota:\BY\to\BX_b)$ where $\BY$ is a $p$-divisible group over $\Fpbar$ and $\iota$ is a quasi-isogeny. To describe this bijection, for any $g\in G(\Qpbr)$ the virtual $F$-crystal $\bM_{g\iv b\sigma(g)}:=(\Zpbr^n,g\iv b\sigma(g)\sigma)$ 
corresponds to a $p$-divisible group $\BY:=\BX_{g\iv b\sigma(g)}$ if and only if the right $\GLn(\Zpbr)$-coset of $g$ lies in $X_{\GL_n(\Zpbr)}(b;\nu_d)$ (which can be seen by classical Dieudonn\'e theory and \cite[Example~4.3]{RapoportRichartz:Gisoc}). In this case, the isomorphism of $F$-isocrystals $g:(\Qpbr^n,g\iv b\sigma(g)\sigma)\riso (\Qpbr^n,b\sigma)$ gives rise to a quasi-isogeny $\iota:\BY\to\BX_b$, which only depends on $g\GLn(\Zpbr)$.
\end{exasub}
Now let us make the following assumptions on $(\cG,b,\nu)$ and the \emph{closed immersion} $\rho:\cG\hra \GL(\Lambda)\cong\GL_n$ of smooth affine group schemes over $\Zp$.
\begin{equation}\label{eqn:RunningAssumption1}
G \text{ is split after a tame extension of }\Qp
\text{ and } p\nmid |\pi_1(G^{\der})|.
\end{equation}

Let us briefly recall the theory of local models in this setting, following Pappas and Zhu \cite{PappasZhu:LocMod}.
Under (\ref{eqn:RunningAssumption1}), Pappas and Zhu \cite[Theorem~4.1]{PappasZhu:LocMod} constructed a smooth affine group scheme $\underline\cH$ over $\Zp[u]$ with connected fibres with the following properties
\begin{enumerate}
\item $\cG^\circ \cong \underline\cH\times_{\Spec\Zp[u]}\Spec\Zp[u]/(u-p)$
\item $\underline{\cH}\times_{\Spec\Zp[u]}\Spec\Zp[u^{\pm}]$ is a reductive group scheme and admits a rigidification in the sense of \cite[\S3.3.5]{PappasZhu:LocMod} (which involves a choice of maximal split torus over $\Zp[u^{\pm}]$, maximal split torus over $\Zpbr[u^{\pm}]$ defined over $\Zp[u^{\pm}]$, etc.). In particular, if we set $H:=\underline\cH\times_{\Spec\Zp[u]}\Spec\Fp\llpar u\rrpar$ the choice of rigidification gives rise to an identification of the Bruhat-Tits buildings $\cB(G,\Qp)\cong \cB(H,\Fp\llpar u \rrpar)$ and $\cB(G,\Qp^{\ur})\cong \cB(H,\Fp\llpar u \rrpar^{\ur})$, identifying the Iwahori Weyl groups. 
\item Recall that we chose $x\in\cB(G,\Qp)$ so that $\cG^\circ = \cG^\circ_x$. Viewing $x$ as a point in $\cB(H,\Fp\llpar u\rrpar)$ by the above identification,  $\cH:=\underline{\cH}\times_{\Spec\Zp[u]}\Spec\Fp[[u]]$ is the parahoric group scheme associated to $x$.
\end{enumerate}

Using the group scheme $\underline{\cH}$ extending $\cG^\circ$, Pappas and Zhu \cite[Definition~7.1]{PappasZhu:LocMod} constructed a local model $\mathrm{M}^{\loc}_{\cG^\circ,\{\nu\}}$ over $\fo_E$ where $E$ is the field of definition of $\{\nu\}$, so that $\mathrm{M}^{\loc}_{\cG^\circ,\{\nu\}}$ has a natural action of $\cG^\circ$ and its geometric special fibre has a description in terms of affine Schubert variety of $\cH$ (\emph{cf.} \cite[Theorem~9.3]{PappasZhu:LocMod}). This construction applies to $(\cG^\circ,\{\nu\}) = (\GLn,\{\nu_d\})$, and the resulting local model $\mathrm{M}^{\loc}_{\GLn,\{\nu_d\}}$ is the grassmannian scheme over $\Zp$, classifying rank-$d$ quotients of a fixed rank-$n$ trivial vector bundle.

Now let us make the following additional assumption:
\begin{equation}\label{eqn:RunningAssumption2}
 \rho\circ\nu \text{ is }\GL_n(\overline{\breve\Q}_p)\text{-conjugate to }\nu_d\text{ for some } d.
\end{equation}
Then Kisin and Pappas showed that the locally closed immersion $\rho|_{\cG^\circ}:\cG^\circ\hra \GLn$ extends to a locally closed immersion 
\begin{equation}\label{eqn:rho-affine}
\underline\rho:\underline\cH\to\GLn
\end{equation}
of smooth affine group schemes over $\Zp[u]$; \emph{cf.} \cite[proof of Proposition~2.3.7]{KisinPappas:ParahoricIntModel}.
Furthermore, by \cite[Theorem~8.1]{PappasZhu:LocMod} it follows that  $\underline{\rho}$ induces a \emph{closed immersion} 

\begin{equation}\label{eqn:EmbLocMod}
\mathrm{M}^{\loc}_{\cG^\circ,\{\nu\}} \hra \mathrm{M}^{\loc}_{\GLn,\{\nu_d\}}\times_{\Spec\Zp}\Spec\fo_E.
\end{equation}

The following result is essentially a consequence of the theory of local models (\emph{cf.} \cite[Corollary~3.5]{Zhou:ModpPts}).
\begin{propsub}\label{prop:EmbADLV}
Assume that conditions (\ref{eqn:RunningAssumption1}, \ref{eqn:RunningAssumption2}) hold for $(\cG,b,\nu)$ and $\rho:\cG\hra \GL_n$.
Then for any $\nu$-admissible element $\tilde w\in \widetilde W$ and any lift $\dot{\tilde w}\in\cG^\circ(\Zpbr)$, we have $\rho(\dot{\tilde w})\in \GLn(\Zpbr)p^{\nu_d}\GLn(\Zpbr)$ and so 
$\rho$ induces an injective map
\[
X_{\breve\KK}(b;\nu) \hra X_{\GL_n(\Zpbr)}(\rho(b);\nu_d) .
\]
\end{propsub}
\begin{proof}
The first claim is proved \cite[Corollary~3.5]{Zhou:ModpPts}. Note that its proof does not use the running assumption in \cite[\S3.2]{Zhou:ModpPts} that $\underline\rho$ (\ref{eqn:rho-affine}) is a closed immersion (i.e., $\cG^\circ=\cG$). Indeed, even when $\underline\rho$ is a locally closed immersion, we still have the closed immersion of the local models (\ref{eqn:EmbLocMod}) so we can inject $\mathrm{M}_{\cG^\circ,\set\mu}^{\loc}(\Fpbar)$ simultaneously into $\GLn(\Qpbr)/\GLn(\Zpbr)$ and $\GLn(\Fpbar\llpar u\rrpar)/\GLn(\Fpbar[[u]])$, respecting the identification of the embeddings of the Bruhat-Tits buildings $\cB(G,\Qpbr)\hra \cB(\GLn,\Qpbr)$ and $\cB(H,\Fpbar\llpar u\rrpar)\hra \cB(\GLn,\Fpbar\llpar u\rrpar)$. Then the rest of the proof of \cite[Proposition~3.4]{Zhou:ModpPts} goes through, using the description of the special fibre $\overline{\mathrm{M}}^{\loc}_{\cG^\circ,\set\mu}$ given in \cite[Theorem~9.3]{PappasZhu:LocMod}).

Now the map $\rho:G(\Qpbr)/\breve\KK^\circ \to \GL_n(\Qpbr)/\GL_n(\Zpbr)$ restricts to $X_{\breve\KK^\circ}(b;\nu)\to X_{\GLn(\Zpbr)}(\rho(b);\nu_d)$. Now since $\rho$ induces 
\[G(\Qpbr)/\breve\KK^\circ \thra G(\Qpbr)/\breve\KK  \hra \GL_n(\Qpbr)/\GL_n(\Zpbr)\] 
 and $X_{\breve\KK}(b;\nu)\subset G(\Qpbr)/\breve\KK$ is the image of  $X_{\breve\KK^\circ}(b;\nu)$, we obtain the desired injective map $X_{\breve\KK}(b;\nu) \hra X_{\GL_n(\Zpbr)}(\rho(b);\nu_d)$ as claimed.
\end{proof}

\begin{rmksub}\label{rmk:EmbADLV}
If one were to be optimistic, one can expect that Proposition~\ref{prop:EmbADLV} should hold even if we remove (or weaken) 
 assumption (\ref{eqn:RunningAssumption1}).
Indeed, what is really needed in the proof of Proposition~\ref{prop:EmbADLV} is a good theory of local models, which was  developed by Pappas and Zhu under assumption (\ref{eqn:RunningAssumption1}); \emph{cf.} \cite{PappasZhu:LocMod}.

If $\rho$ induces a natural map
$X_{\breve\KK}(b;\nu)\hra X_{\GLn(\Zpbr)}(\rho(b);\nu_d)$, then by Example~\ref{exa:SlopeFil} we get a $p$-divisible group $\BX_b(=\BX_{\rho(b)})$ with $(\DD(\BX_b)(\Zpbr))^*\cong (\Zpbr^n,\rho(b)\sigma)$. Furthermore,  
for any $g\in G(\Qpbr)$ whose right $\breve\KK$-coset lies in $X_{\breve\KK}(b;\nu)$, the isomorphism $\rho(g):(\Qpbr^n,\rho(g)\iv \rho(b)\sigma(\rho(g))\sigma)\riso (\Qpbr^n,\rho(b)\sigma)$ preserves the tensors $(s_\alpha)$, so the quasi-isogeny $\iota:\BY\to\BX_b$  corresponding to $\rho(g)\GLn(\Zpbr)\in X_{\GLn(\Zpbr)}(\rho(b);\nu_d)$ is  ``tensor-preserving''. In particular, Proposition~\ref{prop:SlopeFil} shows the existence of a ``tensor-preserving'' quasi-isogeny $\iota:\BY\to\BX_b$ where $\BY$ is completely slope divisible.
\end{rmksub}

\section{The group of tensor-preserving self quasi-isogenies}\label{sec:Qisg}

\subsection{Tensor-preserving internal hom $p$-divisible groups}
Inspired by \cite[\S4.1]{CaraianiScholze:ShVar}, we construct a $p$-divisible group $\cH^G_b$ over $\Fpbar$ that can be thought of as a ``tensor-preserving'' internal hom $p$-divisible group of $\BX_b$ (for completely slope divisible $b$).

Let $\BX$ and $\BX'$ be completely slope divisible $p$-divisible groups over $\Fpbar$. In particular, we have $\BX = \prod_i\BX^{(i)}$ and $\BX' = \prod_j\BX'^{(j)}$, where $\BX^{(i)}$ and $\BX'^{(j)}$ are pure of slope $\lambda_i$ and $\lambda'_j$, respectively. (\emph{Cf.} Definition~\ref{def:SlopeDivBT}.)
Then one can construct the ``internal hom $p$-divisible group'' $\cH_{\BX,\BX'}$ with the following properties:
\begin{enumerate}
\item We have an isomorphism of sheaves $\varprojlim \cH_{\BX,\BX'}[p^n] \cong \lhom(\BX,\BX')$ of $\Zp$-modules over $\Fpbar$.
\item Let $\bM:=\big(\DD(\BX)(\Zpbr)\big)^*$, $\bM':=\big(\DD(\BX')(\Zpbr)\big)^*$, and $\bM_\cH:= \big(\DD(\cH_{\BX,\BX'})(\Zpbr)\big)^*$ be the dual of the contravariant Dieudonn\'e modules (as virtual $F$-crystals). Then $\bM_\cH[\ivtd p]$ is the non-positive slope part of $\Hom(\bM, \bM')[\ivtd p]$.
\end{enumerate}
Indeed, when both $\BX$ and $\BX'$ are pure of some slopes, then $\cH_{\BX,\BX'}$ can be constructed as in \cite[\S4.1]{CaraianiScholze:ShVar}. For completely slope divisible $p$-divisible groups, we put 
\[
\cH_{\BX,\BX'}:=\prod_{i,j}\cH_{\BX^{(i)},\BX'^{(j)}}.
\]
This clearly satisfies the first property, and the second property follows from  \cite[Lemma~4.1.10]{CaraianiScholze:ShVar}. 

For any $p$-divisible group $\cH$ over $\Fpbar$, we define $\wt \cH:=\varprojlim_p\cH_{\Zp}$ where $\cH_{\Zp}$ is any $\Zp$-lift of $\cH$ and the limit is as an \emph{fpqc} sheaf on $\Nilp_{\Zpbr}^{op}$. By rigidity of quasi-isogeny, $\wt\cH$ is independent of choice of $\cH_{\Zp}$, and furthermore,  we have we have a natural isomorphism  $\wt\cH(R) \riso \wt\cH(R/p)$ for any $R\in\Nilp_{\Zpbr}$; \emph{cf.} \cite[Proposition~3.1.3(ii)]{ScholzeWeinstein:RZ}. 

Let us focus on $\cH_{\BX,\BX}$ with $\BX'=\BX$, where  $\BX$ is completely slope divisible. Then given any quasi-isogenies $\iota:\BX\to\BY$, we have  for any $R\in\Nilp_{\Zpbr}$
\begin{equation}\label{eqn:QEnd}
\wt\cH_{\BX,\BX}(R) := \End_{R/p}(\BX_{R/p})[\ivtd p]  \xrightarrow[(*)]{\sim} \End_{R/p}(\BY_{R/p})[\ivtd p]\xleftarrow[(**)]{\sim} \End(\BY_R)[\ivtd p] ,
\end{equation}
where $\BY_R$ is any lift of $\BY_{R/p}$, $(**)$ is  from \cite[Lemma~1.1.3]{Katz:SerreTate}, and $(*)$ is defined by sending $f:\BX_{R/p}\to\BX_{R/p}$ to $\iota_{R/p}\circ f\circ\iota_{R/p}\iv$.


Let $R$ be an f-semiperfect $\Fpbar$-algebra in the sense of \cite[Definition~4.1.2]{ScholzeWeinstein:RZ}. Let $\Acris(R)$ denote the universal $p$-adic PD thickening of $R$  (\emph{cf.} \cite[Proposition~4.1.3]{ScholzeWeinstein:RZ}), and set $\Bcris^+(R):=\Acris(R)[\ivtd p]$. Then for any $p$-divisible group $\BX$ over $\Fpbar$, we have a natural isomorphism $(\DD(\BX_R)(\Acris(R)))^* \cong \Acris(R)\otimes_{\Zpbr}(\DD(\BX)(\Zpbr))^*$ of virtual  $\Acris$-crystals.
\begin{defnsub}\label{def:TenHom}
Let $\BX$ be a $p$-divisible group over $\Fpbar$ with $\bM:=\big(\DD(\BX)(\Zpbr)\big)^*$. We fix finitely many $F$-invariant tensors $s_\alpha\in\bM^\otimes[\ivtd p]$; e.g. (\ref{eqn:bfs}). Then for any f-semiperfect $\Fpbar$-algebra $R$, we say that $\gamma \in \End(\BX_{R})[\ivtd p]$ is a \emph{tensor-preserving quasi-endomorphism} if the induced endomorphism of $\Bcris^+(R)\otimes_{\Zpbr}\bM$ preserves the tensors $(s_\alpha)\subset \Bcris^+(R)\otimes_{\Zpbr}\bM^\otimes$. We let $\End_{(s_\alpha)}(\BX_R)[\ivtd p]\subset \End((\BX_R)[\ivtd p]$ denote the subset of tensor-preserving quasi-endomorphisms.
\end{defnsub}

From now on, we choose $(b,\nu)$ and $\rho:\cG\hra\GL(\Lambda)\cong\GLn$, and assume that the following hold
\begin{description}
\item[{$(\dagger)$}] We have $X_{\breve{\KK}}(b;\nu)\ne\emptyset$ and $\rho$ induces $X_{\breve\KK}(b;\nu)\to X_{\GLn(\Zpbr)}(\rho(b);\nu_d)$ for some $d$.
\end{description}
(By Proposition~\ref{prop:EmbADLV} and \cite[Theorem~A]{He:KottwitzRapoportConj}, this condition is satisfied if (\ref{eqn:RunningAssumption1}, \ref{eqn:RunningAssumption2}) hold and $[b]$ is $\nu$-admissible.) Then by modifying $b$ up to $\sigma$-$G(\Qpbr)$ conjugacy so that the identity right coset of $\breve\KK$ lies in $X_{\breve\KK}(b;\nu)$, we can get a $p$-divisible group $\BX_b$ with $\bM_b\cong (\DD(\BX_b)(\Zpbr))^*$ as a virtual isocrystal (\emph{cf.} Example~\ref{exa:SlopeFil}), and $F$-invariant tensors $(s_\alpha)\subset \bM_b^\otimes$ as in \S\ref{subsec:VirCrys}.

In this setting, we can represent the  \emph{fpqc} sheaf of $\Qp$-vector spaces $R\rightsquigarrow \End_{(s_\alpha)}((\BX_b)_R)[\ivtd p]$  on the (opposite) category of f-semiperfect $\Fpbar$-algebras as follows. 
\begin{lemsub}\label{lem:TenHom}
In the above setting, there exists a $p$-divisible group $\cH^G_b$ (well defined up to isogeny) such that for any f-semiperfect $\Fpbar$-algebra $R$ we have a natural $\Qp$-linear isomorphism
\[
\wt\cH^G_b(R) \cong \End_{(s_\alpha)}((\BX_b)_R)[\ivtd p].
\]
We can obtain $\cH^G_b$ as a $p$-divisible subgroup of $\cH_b$ such that for any f-semiperfect $\Fpbar$-algebra $R$ we have $\wt\cH_b(R) \cong \End_R((\BX_b)_R)[\ivtd p]$.
\end{lemsub}

As discussed above, $\wt\cH^G_b$ can be canonically lifted to a formal group scheme over $\Spf\Zpbr$ so that $\wt \cH^G_b(R) = \wt \cH^G_b(R/p)$ for any $R\in\Nilp_{\Zpbr}$. So the above lemma also gives a description of points valued in $R\in\Nilp_{\Zpbr}$ when $R/p$ is f-semiperfect.
\begin{proof}
By Proposition~\ref{prop:SlopeFil}, there exists a completely slope divisible point $g\breve\KK \in X_{\breve\KK}(b;\nu)$ such that $\db{g\iv b\sigma(g)} = \db{\tilde w}$ for some $\tilde w\in\wt W$. We choose a decent lift $\dot{\tilde w}$ of $\tilde w$ (\emph{cf.} Lemma~\ref{lem:Decency}) and a coset representative $g$ of $g\breve{\KK}$ with $\dot{\tilde w} = g\iv b\sigma(g)$. Then by setting $\bM':=(\Zpbr^n,\rho(\dot{\tilde w})\sigma)$, we have a $p$-divisible group $\BY$ such that $(\DD(\BY)(\Zpbr))^*\cong\bM'$ as a virtual $F$-crystal. Furthermore, 
$g\in\GLn(\Qpbr)$ induces a tensor-preserving isomorphism of $F$-isocrystals $bM'[\ivtd p],(s_\alpha),(s_\alpha))\riso (\bM_{b}[\ivtd p],(s_\alpha))$, which induces a tensor-preserving quasi-isogeny $\iota:\BY\to\BX_b$.

We set $\cH_b:=\cH_{\BY,\BY}$, which makes sense as $\BX_{\dot{\tilde w}}$ is completely slope divisible (\emph{cf.} Lemma~\ref{lem:AffWeyl}). Then (\ref{eqn:QEnd}) gives rise to an  isomorphism $\wt\cH_b(R) \cong \End_R((\BX_b)_{R})[\ivtd p]$ (only depending on $\iota:\BY\to\BX_b$).

We set $\bM':=(\DD(\BY)(\Zpbr))^*$. Then since  $\End_{(s_\alpha)}(\bM')[\ivtd p]\subset \End(\bM')[\ivtd p]$ is a sub-$F$-crystal, and it has a $\Zp$-lattice given by tensor-preserving endomorphisms  $\End_{(s_\alpha)}(\bM')$. 
We define $\cH^G_{b}$ to be the $p$-divisible subgroup of $\cH_{b}:=\cH_{\BY,\BY}$ such that its dual Dieudonn\'e module $\bM^G_\cH$ satisfies
\[
(\bM^G_\cH) = \bM_\cH \cap \End_{(s_\alpha)}(\bM')[\ivtd p]\subset \End(\bM')[\ivtd p].
\]
By construction, $\bM^G_\cH[\ivtd p]$ is the non-positive slope part of the $F$-isocrystal $\End_{(s_\alpha)}(\bM')[\ivtd p]$. Therefore, for any  f-semiperfect $\Fpbar$-algebra $R$  we have \[
(\Bcris^+(R)\otimes_{\Zpbr}\bM^G_\cH)^{F=1} = \End_{(s_\alpha)}(\Bcris^+(R)\otimes_{\Zpbr}\bM')^{F=1};
\]
indeed, the positive slope part of $\End_{(s_\alpha)}(\Bcris^+(R)\otimes_{\Zpbr}\bM')$ does not have any non-zero $F$-invariance (\emph{cf.} the proof of Lemma~4.1.8 in \cite{CaraianiScholze:ShVar}). 

By \cite[Theorem~A]{ScholzeWeinstein:RZ}, for any f-semiperfect $\Fpbar$-algebra $R$  we have
\begin{multline*}
\wt\cH^G_b(R) = \Hom(\Qp/\Zp,(\cH^G_b)_R)[\ivtd p] 
= \Hom(\Bcris^+(R), \Bcris^+(R)\otimes_{\Zpbr}\bM^G_\cH)^{F=1} \\
= (\Bcris^+(R)\otimes_{\Zpbr}\bM^G_\cH)^{F=1} = \End_{(s_\alpha)}(\Bcris^+(R)\otimes_{\Zpbr}\bM')^{F=1}.
\end{multline*}
This shows that $\cH^G_b$ has the desired property.
\end{proof}


\begin{propsub}\label{prop:QEnd}
In the setting of Lemma~\ref{lem:TenHom} (e.g., under the assumptions (\ref{eqn:RunningAssumption1}, \ref{eqn:RunningAssumption2}) with $\BX_{\breve{\KK}}(b;\nu)\ne\emptyset$), the formal group scheme $\wt\cH^G_b$ only depends on $(G,b)$ up to isomorphism, not on the choice of $(s_\alpha)$ and $G\hra\GL(\Lambda)_{\Qp}$. The closed subgroup 
$\wt\cH^G_b\subset\wt \cH_b$ only depends on $\cG\hra \GL(\Lambda)$, not on $(s_\alpha)$. Furthermore,  the dimension of $\cH^G_b$  is $\langle 2\rho,\nu_{[b]}\rangle$, where $2\rho$ is the sum of all positive roots of $G_{\Qpbar}$ and $\nu_{[b]}$ is the dominant representative of the  conjugacy class of $\nu_b$. (Here, positive roots and dominant cocharacters are defined with respect to some choice of $B_{\Qpbr}\subset G_{\Qpbr}$ as in  \S\ref{subsec:GpTh}).
\end{propsub}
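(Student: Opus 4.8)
The plan is to reduce every assertion to a statement about the \emph{adjoint} $F$-isocrystal of the pair $(G,b)$. I would set $\mathfrak n_b := (\Lie(G)\otimes_{\Qp}\Qpbr,\ \Ad(b)\sig)$, where $\Ad(b)$ makes sense on $\Lie(G)\subseteq\Lie(\GL(\Lambda))$ since $b\in G(\Qpbr)$ normalises $\Lie(G)$.

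\textbf{Independence of the choices.} First I would note that in the construction in Lemma~\ref{lem:TenHom} the sub-object $\End_{(s_\alpha)}(\bM_b)\subseteq\End(\bM_b) = \Zpbr\otimes_{\Zp}\Lie(\GL(\Lambda))$ is, by definition, the set of endomorphisms killing every $s_\alpha$ under the derivation action on $\bM_b^\otimes$; since $\cG$ is the pointwise stabiliser of $(s_\alpha)$ in $\GL(\Lambda)$, this is exactly $\Zpbr\otimes_{\Zp}\Lie(\cG)$, with $F$-crystal structure $\Ad(b)\sig$. As $\Lie(\cG)\subseteq\Lie(\GL(\Lambda))$ depends only on $\cG\hra\GL(\Lambda)$, not on which tensors $(s_\alpha)$ cut out $\cG$, the same holds for $\bM^G_\cH = \bM_\cH\cap\End_{(s_\alpha)}(\bM_b)[\ivtd p]$ and hence for the $p$-divisible subgroup $\cH^G_b\subseteq\cH_b$; this gives the second assertion. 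For the first assertion I would pass to isogeny: up to isogeny $\cH^G_b$ is determined by $\bM^G_\cH[\ivtd p]$, which is the non-positive slope part of $\mathfrak n_b$, and $\mathfrak n_b$ is manufactured from $(G,b)$ through the canonical adjoint representation, with no reference to $(s_\alpha)$ or to the chosen faithful representation $G\hra\GL(\Lambda)_{\Qp}$. Since $\wt{(-)}$ is insensitive both to isogenies and to the choice of integral lift, $\wt\cH^G_b$ depends up to isomorphism only on $(G,b)$ --- concretely, on an f-semiperfect $\Fpbar$-algebra $R$ one has $\wt\cH^G_b(R)\cong(\Bcris^+(R)\otimes_{\Qp}\Lie(G))^{\Ad(b)\sig = 1}$ by Lemma~\ref{lem:TenHom}.

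\textbf{The dimension.} Next I would record that, up to isogeny, $\cH^G_b$ is the $p$-divisible group over $\Fpbar$ whose homological Dieudonn\'e isocrystal is the non-positive slope part $\mathfrak n_b^{\leq 0}$ of $\mathfrak n_b$ (this is the isocrystal of a $p$-divisible group, since the slopes of $\mathfrak n_b$, being differences of slopes of $\bM_b$, lie in $[-1,1]$). In the homological normalisation of \S\ref{subsec:VirCrys} --- with $\mu_{p^\infty}$ of slope $-1$ and $\Qp/\Zp$ of slope $0$, so that all slopes of a $p$-divisible group lie in $[-1,0]$ --- the dimension of a $p$-divisible group is minus the sum of the slopes of its Dieudonn\'e isocrystal (for $\GL_n$: $\dim\BX_{b'} = -v_p(\det b')$). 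Hence
\[
\dim\cH^G_b \;=\; -\sum_{\lambda\leq 0}\lambda\cdot\dim_{\Qpbr}\mathfrak n_b^{(\lambda)},
\]
with $\mathfrak n_b^{(\lambda)}$ the slope-$\lambda$ part. By Kottwitz's characterisation of $\nu_b$ and the sign convention of Definition~\ref{def:grmu}, the slope decomposition of $\mathfrak n_b$ coincides with the weight decomposition of $\Lie(G)\otimes\Qpbr$ under $\Ad\circ\nu_b$ (using $\nu_{\Ad(b)} = \Ad\circ\nu_b$). Choosing the dominant representative $\nu_{[b]}$ and splitting $\Lie(G)_{\Qpbar}$ into root spaces gives $\dim\mathfrak n_b^{(\lambda)} = \#\{\alpha\in\Phi : \langle\alpha,\nu_{[b]}\rangle = \lambda\}$ for $\lambda\ne 0$ (the Cartan part sits in $\mathfrak n_b^{(0)}$, which is annihilated by the factor $\lambda=0$). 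Therefore
\[
\dim\cH^G_b \;=\; \sum_{\alpha\in\Phi,\ \langle\alpha,\nu_{[b]}\rangle<0}\bigl(-\langle\alpha,\nu_{[b]}\rangle\bigr) \;=\; \sum_{\beta\in\Phi^+}\langle\beta,\nu_{[b]}\rangle \;=\; \langle 2\rho,\nu_{[b]}\rangle,
\]
the middle step using that, $\nu_{[b]}$ being dominant, $\langle\alpha,\nu_{[b]}\rangle\geq 0$ for every positive root, so the roots with $\langle\alpha,\nu_{[b]}\rangle<0$ are exactly the $-\beta$ with $\beta\in\Phi^+$ and $\langle\beta,\nu_{[b]}\rangle>0$.

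\textbf{Expected main obstacle.} The hard part will be the sign bookkeeping: I must reconcile the homological normalisation $b\sig = p^{-1}F$, the convention of Definition~\ref{def:grmu} relating $\nu_b$ to slope gradings, and the exact meaning of ``non-positive slope part'', and verify that they combine to give $\langle 2\rho,\nu_{[b]}\rangle$ and not, say, $\langle 2\rho,-\nu_{[b]}\rangle$ or a formula off by $\rk G$. The identity $\dim\BX_{b'} = -v_p(\det b')$ and a low-rank test --- e.g.\ $G=\GL_2$ with Newton cocharacter $(0,-1)$, where $\dim\cH^G_b = 1 = \langle 2\rho,\nu_{[b]}\rangle$ --- serve as guards. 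A lesser point is to make precise the sense in which the functor on f-semiperfect algebras determines $\wt\cH^G_b$ up to isomorphism, which follows from isogeny-invariance of $\wt{(-)}$ together with the classification of $p$-divisible groups up to isogeny by their Dieudonn\'e isocrystals.
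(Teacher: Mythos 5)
Your proposal is correct and follows essentially the same route as the paper: both identify $\bM^G_\cH[\ivtd p]$ with the non-positive slope part of the adjoint isocrystal $(\g_{\Qpbr},\Ad(b)\sig)$ (whence the independence claims), and both compute the dimension as $\sum_{\lambda<0}(-\lambda)\dim\mathfrak n_b^{(\lambda)}$ and reduce it to $\langle 2\rho,\nu_{[b]}\rangle$ via the root-space decomposition. The only cosmetic difference is that the paper pins down the slope decomposition by passing to a decent (completely slope divisible) representative and working with relative roots over $\Qpbr$ before converting to absolute roots, whereas you invoke Kottwitz's defining property of $\nu_b$ for the adjoint representation directly; your sign bookkeeping is consistent with the paper's conventions.
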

Although $\cH^G_b$ is well defined only up to isogeny,  the dimension of a $p$-divisible group is an isogeny invariant.
\begin{proof}
We use the notations as in \S\ref{subsec:GpTh} and the proof of Lemma~\ref{lem:TenHom}. 
We constructed the $p$-divisible group $\cH^G_b$ using the choice of $g\breve\KK\in X_{\breve\KK}(b;\nu)$ such that $\db{g\iv b\sigma(g)} = \db{\tilde w}$ for some $\tilde w$ (although its universal cover $\wt\cH^G_b$ does not depend on this choice; \emph{cf.} Lemma~\ref{lem:TenHom}). We choose a decent lift $\dot{\tilde w}$ of $\tilde w$ so that  its Newton cocharacter satisfies $\nu_{\dot{\tilde w}}\in X_*(S)_\Q$; \emph{cf.} Lemma~\ref{lem:Decency}. By modifying the choice of $B_{\Qpbr}$, we may assume that $\nu_{\dot{\tilde w}}$ is dominant (i.e., $\nu_{\dot{\tilde w}} = \nu_{[b]}$).
%

Note that $\rho:\cG\hra \GL(\Lambda)$ identifies
\[
\g_{\Qpbr}(:=\Lie G_{\Qpbr}) = \End_{(s_\alpha)}(\bM')[\ivtd p]
\]
as a Lie subalgebra of $\gl(\Lambda)_{\Qpbr} =  \End(\bM')[\ivtd p]$. Therefore, we can also view $\bM^G_\cH[\ivtd p]$ as the non-positive slope part of the $F$-isocrystal $(\g_{\Qpbr}, (\ad \dot{\tilde w})\sig)$, where $\sig$ on $\g_{\Qpbr}$ fixes the $\Qp$-structure $\g:=\Lie G$. In particular, the isogeny class of $\cH^G_b$ is determined by $(G,b)$, and the inclusion  $\cH^G_b\hra \cH_b$ only depends on $\cG\subset\GL(\Lambda)$, not on the choice of tensors $(s_\alpha)$. Since $\wt\cH^G_b$ only depends on the isogeny class of $\cH^G_b$, we obtain the independence claims in the statement.

Let us now describe the slope decomposition of the isocrystal
\[
(\g_{\Qpbr}, (\ad \dot{\tilde w}) \sigma).
\]
As in \S\ref{subsec:GpTh}, we fix the maximal $\Qpbr$-split torus $S\subset G$ defined over $\Qp$. We choose a $\Qpbr$-rational borel subgroup $B_{\Qpbr}\subset G_{\Qpbr}$ so that $\nu_{\dot{\tilde w}}$ is dominant.
With respect to this choice, we obtain the set of relative roots $\Phi_0$  for $G_{\Qpbr}$, and the subset of positive relative roots $\Phi_0^+$. We also have the following decomposition:
\[
\g_{\Qpbr} = \mathfrak{t} \oplus \left(\bigoplus_{\alpha_0\in\Phi_0} \mathfrak{u}_{\alpha_0}\right),
\]
where $\mathfrak{t} = \Lie(T_{\Qpbr})$ is the Lie algebra of $T=\cZ_G(S)$ over $\Qpbr$, and $\mathfrak u_{\alpha_0}$  is the $\alpha_0$-eigenspace with respect to the adjoint action of $S(\Qpbr)$.

To obtain the slope decomposition for $(\g_{\Qpbr}, (\ad \dot{\tilde w})\sigma)$, it suffices to get the slope decomposition for the $\Qpbr$-$\sigma^r$-space $(\g_{\Qpbr},((\ad \dot{\tilde w})\sigma)^r)$ for some suitable $r$. We may choose $r$ so that $(\dot{\tilde w}\sigma)^r = p^{r\nu_{[b]}}\sigma^r \in T(\Qpbr)\rtimes\sigma^\Z$ and the borel subgroup $B_{\Qpbr}\subset G_{\Qpbr}$ is actually defined over $\Q_{p^r}$. Note that the $\Q_{p^r}$-rationality of $B_{\Qpbr}$ ensures that each $\mathfrak{u}_{\alpha_0}$ is naturally defined over $\Q_{p^r}$; in particular, they are stable under $\sigma^r$. Furthermore, since $((\ad \dot{\tilde w}) \sigma)^r = \ad(p^{r\nu_{[b]}})\sigma^r$, it follows that
\begin{itemize}
\item for any $\alpha_0\in\Phi_0$, $((\ad \dot{\tilde w})\sigma)^r$ acts on $\mathfrak{u}_{\alpha_0}$ by $p^{\langle \alpha_0,r\nu_{[b]}\rangle}\sigma^r$;
\item $((\ad \dot{\tilde w})\sigma)^r$ acts on $\mathfrak{t}$ as $\sigma^r$.
\end{itemize}
Therefore, it follows that $\mathfrak{t}\subset \g_{\Qpbr}$ is a sub-isocrystal pure of slope $0$, and $\mathfrak{u}_{\alpha_0}$ is contained in the sub-isocrystal pure of slope $\langle \alpha_0,\nu_{[b]}\rangle$.  By dominance of $\nu_{[b]}$, $\langle \alpha_0,\nu_b\rangle$ is negative only if $\alpha_0$ is a negative relative root. Therefore, we have
\[
\bM^G_\cH = \left(\bigoplus_{\alpha_0\in\Phi_0^+} \mathfrak{u}_{-\alpha_0}\right)\oplus\left(\bigoplus_{\alpha_0\in\Phi_0^+ \text{ s.t. } \langle \alpha_0,\nu_{[b]}\rangle=0} \mathfrak{u}_{\alpha_0}\right) \oplus \mathfrak{t}.
\]
Note that only $\mathfrak{u}_{-\alpha_0}$ with $\langle \alpha_0,\nu_{[b]}\rangle > 0$ contributes to $\dim\cH^G_b$, and we have
\begin{equation}\label{eqn:dim}
\dim\cH^G_b = \sum_{\alpha_0\in\Phi_0^+} \langle \alpha_0, \nu_{[b]} \rangle \dim \mathfrak{u}_{\alpha_0}
\end{equation}
(Note that $\dim \mathfrak{u}_{\alpha_0} = \dim \mathfrak{u}_{-\alpha_0}$.)

Now let $\Phi^+\subset X^*(T)$ denote the set of positive (absolute) roots for $G_{\ol{\breve\Q}_p}$ (where the positivity is defined by $B_{\Qpbr}$), and for $\alpha\in\Phi^+$ we write $\mathfrak{u}_\alpha$ for the $\alpha$-eigenspace for the adjoint action of $T(\ol{\breve\Q}_p)$ on $\g_{\ol{\breve\Q}_p}$, which is necessarily one-dimensional. For any positive relative root $\alpha_0\in\Phi_0$, we have
\[
\mathfrak{u}_{\alpha_0}\otimes_{\Qpbr}\ol{\breve\Q}_p = \bigoplus_{\alpha\in\Phi^+\text{ s.t. } \alpha|_{S(\Qpbr)} = \alpha_0}\mathfrak{u}_\alpha;
\]
In particular, $\dim\mathfrak{u}_{\alpha_0}$ is equal to the number of absolute roots $\alpha$ with $\alpha|_{S(\Qpbr)} = \alpha_0$.
Since we have  $\langle \alpha,\nu_{[b]}\rangle = \langle \alpha|_{S(\Qpbr)},\nu_{[b]}\rangle$ for any absolute root $\alpha$ (as $\nu_{[b]}\in X_*(S)_{\Q,+}$), formula (\ref{eqn:dim}) becomes
\[
\dim\cH^G_b = \sum_{\alpha\in\Phi^+} \langle \alpha, \nu_{[b]} \rangle = \langle \sum_{\alpha\in\Phi^+}\alpha, \nu_{[b]} \rangle =:\langle 2\rho,\nu_{[b]}\rangle,
\]
as desired.
\end{proof}

\subsection{Self quasi-isogeny groups}\label{subsec:Qisg}
Let $\Qisg(\BX_b)$ denote the sheaf of groups on  $\Nilp_{\Zpbr}$, sending $R$ to the group of self quasi-isogenies  of $(\BX_b)_{R/p}$. Then $\Qisg(\BX_b)$ can be represented by a formal group scheme over $\Spf\Zpbr$; \emph{cf.} \cite[Lemma~4.2.10]{CaraianiScholze:ShVar}. Indeed, we have a closed immersion of formal schemes $\Qisg(\BX_b)\hra(\wt\cH_b)^2$ given by $\gamma\mapsto(\gamma,\gamma\iv)$, inducing a natural bijection
\[
\Qisg(\BX_b)(R) \riso \{(\gamma,\gamma')\in(\wt\cH_b)^2(R):\  \gamma\circ\gamma' =\gamma'\circ\gamma = \id\}
\]
 for any $R\in\Nilp_{\Zpbr}$. (Here, we identify $(\wt\cH_b)(R) = \End((\BX_b)_{R/p})[\ivtd p]$.) Similarly, we can make the following definition.
\begin{defnsub}\label{def:Qisg}
In the setting of Lemma~\ref{lem:TenHom} (e.g., under the assumptions (\ref{eqn:RunningAssumption1}, \ref{eqn:RunningAssumption2}) with $\BX_{\breve{\KK}}(b;\nu)\ne\emptyset$), we define the closed formal subgroup scheme $\Qisg_G(\BX_b)\subset \Qisg(\BX_b)$ over $\Spf\Zpbr$, such that the underlying formal subscheme is
\[
\Qisg_G(\BX_b) = \Qisg(\BX_b)\times_{(\wt\cH_b)^2}(\wt\cH^G_b)^2.
\]
Note that we have a closed immersion of formal schemes $\Qisg_G(\BX_b)  \hra (\wt\cH^G_b)^2$,  inducing a natural bijection
\[
\Qisg_G(\BX_b)(R) \riso \{(\gamma,\gamma')\in(\wt\cH^G_b)^2(R):\  \gamma\circ\gamma' =\gamma'\circ\gamma = \id\}
\]
 for any $R\in\Nilp_{\Zpbr}$.
\end{defnsub}
The following is a straightforward corollary of Lemma~\ref{lem:TenHom}.
\begin{corsub}\label{cor:TenHom}
Let  $R\in\Nilp_{\Zpbr}$, and assume that $R/p$ is f-semiperfect. Then $\gamma\in\Qisg(\BX_b)$ lies in
 $\Qisg_G(\BX_b)(R)$ if and only if  $\gamma_{R/p}: (\BX_b)_{R/p}\dra (\BX_b)_{R/p}$ preserves the tensors $(s_\alpha)$.
\end{corsub}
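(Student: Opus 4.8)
The plan is to unwind Definition~\ref{def:Qisg} and reduce everything to Lemma~\ref{lem:TenHom}, the only additional input being that the stabiliser of the tensors $(s_\alpha)$ is a \emph{group}, so that the inverse of a tensor-preserving quasi-isogeny is again tensor-preserving.

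First I would recall from Definition~\ref{def:Qisg} that the closed immersion $\Qisg(\BX_b)\hra(\wt\cH_b)^2$, $\gamma\mapsto(\gamma,\gamma\iv)$, identifies $\Qisg_G(\BX_b)(R)$ with the set of those $\gamma\in\Qisg(\BX_b)(R)$ for which both $\gamma$ and its (two-sided, hence unique) inverse $\gamma\iv$, viewed inside $\wt\cH_b(R)=\End_{R/p}((\BX_b)_{R/p})[\ivtd p]$, lie in the subgroup $\wt\cH^G_b(R)$. Since $\cH^G_b$ is a $p$-divisible group we have $\wt\cH^G_b(R)=\wt\cH^G_b(R/p)$, and as $R/p$ is f-semiperfect, Lemma~\ref{lem:TenHom} identifies $\wt\cH^G_b(R)$ with the subspace $\End_{(s_\alpha)}((\BX_b)_{R/p})[\ivtd p]\subset\End_{R/p}((\BX_b)_{R/p})[\ivtd p]$ of tensor-preserving quasi-endomorphisms. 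Thus membership of $\gamma$ in $\Qisg_G(\BX_b)(R)$ is equivalent to the condition that both $\gamma_{R/p}$ and $(\gamma_{R/p})\iv$ preserve the tensors $(s_\alpha)\subset\Bcris^+(R)\otimes_{\Zpbr}\bM_b^\otimes$ in the sense of Definition~\ref{def:TenHom}.

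It then remains to check that this double condition is equivalent to the single condition that $\gamma_{R/p}$ preserves $(s_\alpha)$. One implication is trivial. For the converse, $\gamma_{R/p}$, being a quasi-isogeny, induces an element of $\GL(\Bcris^+(R)\otimes_{\Zpbr}\bM_b)$; if this element fixes every $s_\alpha$, then by Proposition~\ref{prop:Chevalley} it lies in $G(\Bcris^+(R))=\cG(\Bcris^+(R))$, and since $G$ is a group its inverse lies there too, so $(\gamma_{R/p})\iv$ also fixes every $s_\alpha$. This gives the statement.

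I expect essentially no obstacle here: all the substantive work is already contained in Lemma~\ref{lem:TenHom} (together with the identifications of $\wt\cH_b(R)$ and $\wt\cH^G_b(R)$ used in Definition~\ref{def:Qisg}), and the only point needing even minimal care is that ``preserving $(s_\alpha)$'' cuts out a \emph{subgroup} of $\GL(\Lambda)_{\Qp}$ rather than merely a submonoid, which is exactly the content of Proposition~\ref{prop:Chevalley}.
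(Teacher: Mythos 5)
Your proposal is correct and matches the paper's intent: the paper offers no written proof, declaring the statement ``a straightforward corollary of Lemma~\ref{lem:TenHom}'', and your argument is precisely the expected unwinding of Definition~\ref{def:Qisg} through that lemma, together with the (correct) observation that the tensor-stabiliser is a subgroup so that $\gamma\iv$ automatically preserves $(s_\alpha)$ once $\gamma$ does. The only remark worth making is that the last step needs nothing as strong as Proposition~\ref{prop:Chevalley}: applying $\gamma\iv$ to the identity $\gamma(s_\alpha)=s_\alpha$ already gives $\gamma\iv(s_\alpha)=s_\alpha$.
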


For the rest of this section, we will describe $\Qisg_G(\BX_b)$ as a formal scheme, generalising \cite[Proposition~4.2.11]{CaraianiScholze:ShVar}.

Recall that $J_b(\Qp)\subset G(\Qpbr)$ is the stabiliser of $b\sigma$ via the conjugation action of $G(\Qpbr)$ on $G(\Qpbr)\rtimes \sigma^\Z$. Similarly, let $J^\GL_{b}(\Qp)\subset \GL(\Lambda)(\Qpbr)$ denote the stabiliser of $b\sigma\in \GL(\Lambda)(\Qpbr)\rtimes \sigma^\Z$, viewing $G$ as a subgroup of $\GL(\Lambda)$ using the fixed embedding. Note that $J_b(\Qp)$ and $J^\GL_b(\Qp)$ are the group of $\Qp$-points of some reductive group over $\Qp$, so they are equipped with a natural locally profinite topology.

By Dieudonn\'e theory over $\Fpbar$, we can interpret $J^\GL_b(\Qp)$ as the group of self quasi-isogenies of $\BX_b$, and $J_b(\Qp)\subset J^\GL_b(\Qp)$ as the subgroup of tensor-preserving self quasi-isogenies.

Let $\underline{J_b(\Qp)}$ and $\underline{J^\GL_b(\Qp)}$ respectively denote the formal group schemes over $\Spf\Zpbr$ associated to the locally profinite groups $J_b(\Qp)$ and $J^\GL_b(\Qp)$. (To describe the underlying formal scheme of $\underline{J_b(\Qp)}$, for any open compact subset $U\subset J_b(\Qp)$ such that $U$ is the  limit of the projective system of finite sets $\set{U_m}$, we have an formal open subscheme $\underline U \subset\underline{J_b(\Qp)}$ with $\underline U = \varprojlim\underline{U_m}$.)

Recall that $\Qisg(\BX_b)(\Fpbar) = J^\GL_b(\Qp)$, so we have a natural map $\underline{J^\GL_b(\Qp)}\ra\Qisg(\BX_b)$ of group-valued sheaves on $\Nilp_{\Zpbr}$  as follows: for  $R\in \Nilp_{\Zpbr}$, we send a self quasi-isogeny $\gamma:\BX_b\dra\BX_b$ to $\gamma_{R/p}$. (This a priori defines a map from  the constant ``discrete'' group associated to $J^\GL_b(\Qp)$, but one can easily see that this map factors through the constant locally profinite group $\underline{J^\GL_b(\Qp)}$.) It is shown in \cite[Proposition~4.2.11]{CaraianiScholze:ShVar} that we have a natural section
\begin{equation}
\Qisg(\BX_b)\ra \underline{J^\GL_b(\Qp)}
\end{equation}
with all the fibres  isomorphic to $\Spf\Zpbr [[ x_1^{1/p^\infty},\cdots, x_{d'}^{1/p^\infty} ]]$. Here, $d' = \langle 2\rho',\nu_b\rangle$, where $2\rho'$ is the sum of all the positive roots of $\GL(\Lambda)_{\Qpbar}$.

Since we have $\Qisg_G(\BX_b)(\Fpbar) = J_b(\Qp)$, we also have a natural map of group-valued sheaves $\underline{J_b(\Qp)}\ra \Qisg_G(\BX_b)$.
Furthermore, we have the following generalisation of \cite[Proposition~4.2.11]{CaraianiScholze:ShVar}:
\begin{propsub}\label{prop:Qisg}
 The formal subgroup scheme $\Qisg_G(\BX_b)\subset \Qisg(\BX_b)$ is independent of the choice of tensors $(s_\alpha)\subset\Lambda^\otimes$, and the formal group scheme $\Qisg_G(\BX_b)$ only depends on $(G,b)$ up to isomorphism. Furthermore, there exists a natural section of formal group schemes
\[
\Qisg_G(\BX_b) \ra \underline{J_b(\Qp)},
\]
with  all the fibres isomorphic to  $\Spf\Zpbr [[ x_1^{1/p^\infty},\cdots, x_d^{1/p^\infty}]]$ as a formal scheme. Here, $d = \langle 2\rho,\nu_{[b]}\rangle$, where $2\rho$ is the sum of all the positive roots of $G_{\Qpbar}$ and $\nu_{[b]}$ is the dominant representative of the conjugacy class of $\nu_b$. 
\end{propsub}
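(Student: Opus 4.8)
The plan is to deduce everything from an explicit description of $\Qisg_G(\BX_b)$ obtained by cutting down \cite[Proposition~4.2.11]{CaraianiScholze:ShVar} (the $\GL(\Lambda)$-case) by the $G$-structure, using Proposition~\ref{prop:QEnd} and Corollary~\ref{cor:TenHom}. Independence of the choice of $(s_\alpha)\subset\Lambda^\otimes$ is immediate: by Definition~\ref{def:Qisg} we have $\Qisg_G(\BX_b)=\Qisg(\BX_b)\times_{(\wt\cH_b)^2}(\wt\cH^G_b)^2$, and by Proposition~\ref{prop:QEnd} the closed immersion $(\wt\cH^G_b)^2\hra(\wt\cH_b)^2$ depends only on $\cG\hra\GL(\Lambda)$.

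Next I would construct the section. Composing the closed immersion $\Qisg_G(\BX_b)\hra\Qisg(\BX_b)$ with the section $s\colon\Qisg(\BX_b)\to\underline{J^\GL_b(\Qp)}$ of \cite[Proposition~4.2.11]{CaraianiScholze:ShVar} gives a morphism $\Qisg_G(\BX_b)\to\underline{J^\GL_b(\Qp)}$. Since $J_b(\Qp)$ is a closed subgroup of $J^\GL_b(\Qp)$, the map $\underline{J_b(\Qp)}\to\underline{J^\GL_b(\Qp)}$ is a closed immersion; and since a morphism of formal schemes into a constant formal scheme is determined by its effect on $\Fpbar$-points (the source being a disjoint union of formal spectra of $(x_i,p)$-adically complete, perfect-modulo-$p$ rings), while on $\Fpbar$-points our morphism is the inclusion $\Qisg_G(\BX_b)(\Fpbar)=J_b(\Qp)\hra J^\GL_b(\Qp)$ (as $s$ restricts to the identity on $\Fpbar$-points), it factors through $\underline{J_b(\Qp)}$. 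This produces a section $s_G\colon\Qisg_G(\BX_b)\to\underline{J_b(\Qp)}$ of the natural map $\underline{J_b(\Qp)}\to\Qisg_G(\BX_b)$; as the restriction of the homomorphism $s$ (the projection in the semidirect-product decomposition $\Qisg(\BX_b)\cong\Qisg^\circ(\BX_b)\rtimes\underline{J^\GL_b(\Qp)}$ of \emph{loc.\ cit.}), $s_G$ is a homomorphism, so $\Qisg_G(\BX_b)\cong\Qisg^\circ_G(\BX_b)\rtimes\underline{J_b(\Qp)}$ with $\Qisg^\circ_G(\BX_b):=\ker(s_G)$, and left translation by $\underline{J_b(\Qp)}$ identifies every fibre of $s_G$ with $\Qisg^\circ_G(\BX_b)$.

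It then remains to identify $\Qisg^\circ_G(\BX_b)$, which by construction equals $\Qisg^\circ(\BX_b)\times_{(\wt\cH_b)^2}(\wt\cH^G_b)^2$. Here I would use that the proof of \cite[Proposition~4.2.11]{CaraianiScholze:ShVar} identifies $\Qisg^\circ(\BX_b)$, via $\gamma\mapsto\gamma-\id$, with the universal cover $\wt\cH^{\circ}_b$ of the connected part of the internal hom $p$-divisible group $\cH_b=\cH_{\BX_{\dot{\tilde w}},\BX_{\dot{\tilde w}}}$, i.e.\ with $\Spf\Zpbr[[x_1^{1/p^\infty},\cdots,x_{d'}^{1/p^\infty}]]$, $d'=\dim\cH_b=\langle2\rho',\nu_b\rangle$, compatibly with the embedding into $(\wt\cH_b)^2$. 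Since $\bM^G_\cH\subset\bM_\cH$ is a sub-$F$-crystal by the construction in Lemma~\ref{lem:TenHom}, its slope decomposition, hence its connected--\'etale splitting, is induced from that of $\bM_\cH$; so the inclusion $(\wt\cH^G_b)^2\hra(\wt\cH_b)^2$ meets $\wt\cH^{\circ}_b$ in $\wt\cH^{G,\circ}_b:=$ the universal cover of the connected part of $\cH^G_b$ (using also $\id\in\wt\cH^G_b$, valid because the central $\Gm\subset\GL(\Lambda)$ lies in $\cG$ for a Hodge-type datum), the condition $\gamma^{-1}\in\wt\cH^G_b$ being automatic since a tensor-preserving quasi-isogeny lies in $\Qisg_G(\BX_b)$ by Corollary~\ref{cor:TenHom}. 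Hence $\Qisg^\circ_G(\BX_b)\cong\wt\cH^{G,\circ}_b\cong\Spf\Zpbr[[x_1^{1/p^\infty},\cdots,x_d^{1/p^\infty}]]$ with $d=\dim\cH^G_b=\langle2\rho,\nu_{[b]}\rangle$ by Proposition~\ref{prop:QEnd}. Finally, $\Qisg_G(\BX_b)$ depends only on $(G,b)$: $\underline{J_b(\Qp)}$ and $d$ plainly do, and by Corollary~\ref{cor:TenHom} together with Proposition~\ref{prop:Chevalley} the functor $R\rightsquigarrow\Qisg_G(\BX_b)(R)$, on $R\in\Nilp_{\Zpbr}$ with $R/p$ f-semiperfect, is $R\rightsquigarrow\{g\in G(\Bcris^+(R)):g\,b\,\sigma(g)^{-1}=b\}$, which involves only $(G,b)$ and the Frobenius on $\Bcris^+(R)$; a formal scheme of the shape found above is determined by such values.

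The main obstacle is the passage through \cite[Proposition~4.2.11]{CaraianiScholze:ShVar} in the last step: one must unwind its proof precisely enough to obtain the scheme-theoretic identification of $\Qisg^\circ(\BX_b)$ with $\wt\cH^{\circ}_b$ \emph{compatibly with} the closed embeddings into $(\wt\cH_b)^2$ given by $\gamma\mapsto(\gamma,\gamma^{-1})$, so that intersecting with the sub-$F$-crystal $\bM^G_\cH\subset\bM_\cH$ yields exactly the connected part of $\cH^G_b$ on the nose. The other assertions are formal consequences of the cited results.
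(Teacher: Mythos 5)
Your proposal is correct and follows essentially the same route as the paper: restrict the Caraiani--Scholze section $\Qisg(\BX_b)\to\underline{J^\GL_b(\Qp)}$ to $\Qisg_G(\BX_b)$, identify the kernel with $\wt\cH^{G,\circ}_b$ via $\gamma\mapsto\gamma-\id$ by cutting the identification $\Qisg^\circ(\BX_b)\cong\wt\cH^\circ_b$ from the proof of \cite[Proposition~4.2.11]{CaraianiScholze:ShVar} down by the sub-$F$-crystal $\bM^G_\cH\subset\bM_\cH$, and then invoke Proposition~\ref{prop:QEnd} together with \cite[Proposition~3.1.3(iii)]{ScholzeWeinstein:RZ} for the dimension and the perfectoid power-series description of the fibres. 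The ``main obstacle'' you flag (unwinding \emph{loc.\ cit.}\ compatibly with the embedding into $(\wt\cH_b)^2$) is handled in the paper in exactly the way you propose, so it is not a gap.
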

\begin{rmksub}\label{rmk:Qisg}
We set $\Qisg^\circ_G(\BX_b):=\ker\big( \Qisg_G(\BX_b) \ra \underline{J_b(\Qp)}\big)$. 
Then Proposition~\ref{prop:Qisg} implies that
\[
\Qisg_G(\BX_b) = \Qisg^\circ_G(\BX_b) \rtimes \underline{J_b(\Qp)}.
\]
\end{rmksub}
\begin{proof}[Proof of Proposition~\ref{prop:Qisg}]
The independence of the choice of tensors $(s_\alpha)$ follows from the same property for $\cH^G_b$. Clearly, the natural section $\Qisg(\BX_b)\thra \underline{J_b^\GL(\Qp)}$ restricts to $\Qisg_G(\BX_b)\thra \underline{J_b(\Qp)}$.

Let $\cH^{G,\circ}_b$ be the connected part of  $\cH^{G}_b$, and let $\wt\cH^{G,\circ}_b$ denote the unique lift over $\Zpbr$ of the universal cover $ \varprojlim_{[p]}\cH^{G,\circ}_b$ of  $\cH^{G,\circ}_b$.
We now claim that the following map
\[
\xymatrix@1{
\Qisg_G(\BX_b) \ar[rrr]^-{\gamma\mapsto(\gamma,\gamma\iv)} &&& (\wt\cH^G_b)^2 \ar[rrr]^-{(\gamma_1,\gamma_2)\mapsto \gamma_1-\id} &&& \wt\cH^G_b
}\]
induces an isomorphism $\Qisg^\circ_G(\BX_b)\riso\wt\cH^{G,\circ}_b$ of formal schemes, where $\Qisg^\circ_G(\BX_b):=\ker\big( \Qisg_G(\BX_b) \ra \underline{J_b(\Qp)}\big)$. Indeed, this isomorphism $\Qisg^\circ(\BX_b)\riso \wt\cH_b^\circ$ in the case of $\GL(\Lambda)_{\Qp}$ can be read off from the proof of Proposition~4.2.11 in \cite{CaraianiScholze:ShVar}. Therefore, for any $\gamma\in\wt\cH^{G,\circ}_b(R)$ where $p$ is nilpotent in $R$ and $R/p$ is f-semiperfect, $(\id+\gamma)_{R/p}$ is a (necessarily tensor-preserving) self quasi-isogeny of $(\BX_b)_{R/p}$, so it defines a section in $\Qisg^\circ_G(\BX_b)(R)$. Conversely, the isomorphism $\Qisg^\circ(\BX_b)\riso \wt\cH_b^\circ$ clearly takes $\Qisg^\circ_G(\BX_b)$ to $\wt\cH^{G,\circ}_b$. Hence, we obtain the desired isomorphism $\Qisg^\circ_G(\BX_b)\riso\wt\cH^{G,\circ}_b$.

Now, the description of the fibre of $\Qisg_G(\BX_b)\thra \underline{J_b(\Qp)}$ follows from Proposition~\ref{prop:QEnd} and \cite[Proposition~3.1.3(iii)]{ScholzeWeinstein:RZ}.
\end{proof}

\section{Review of Kisin-Pappas deformation rings}\label{sec:KP}
From now on, we always assume that $p>2$.

\subsection{Review of Dieudonn\'e display theory}
We briefly review and set up the notation for the theory of Dieudonn\'e displays for \emph{$p$-torsion free} complete local noetherian rings with residue field $\Fpbar$, following \cite[\S3.1]{KisinPappas:ParahoricIntModel}. For more standard references for Dieudonn\'e display theory, we refer to Zink's original paper \cite{Zink:DieudonnePDivGpCFT} and Lau's paper \cite{Lau:DisplayCrystals}.

Let $R$ be a complete local noetherian ring with residue field $\Fpbar$. In \cite{Zink:DieudonnePDivGpCFT}, Zink introduced a $p$-adic subring $\Zink(R)$ of the ring of Witt vectors $W(R)$, which fits in the following short exact sequence:
\[
\xymatrix@1{
0 \ar[r] & \widehat W(\m_R) \ar[r] & \Zink (R) \ar[r] & W(\Fpbar) \ar[r] &0,
}
\]
where $\widehat W(\m_R):=\{(a_i)\in W(\m_R)|\ a_i\to 0\}$.  If $p>2$ then $\Zink(R)$ is stable under the Frobenius and Verschiebung operators of $W(R)$. Recall that $\Zink(R)$ is $p$-torsion free if $R$ is $p$-torsion free (since in that case $W(R)$ is $p$-torsion free).

Let $\BI_R$ denote the kernel of the natural projection $\Zink(R)\thra R$, which coincides with the injective image of the Verschiebung operator (if $p>2$). We let $\sigma: \Zink(R)\ra\Zink (R)$ denote the Witt vector Frobenius map. Also if $p>2$ then $\BI_R$ is stable under the natural divided power structure on the kernel of $W(R)\thra R$, and $\Zink(R)$ is a $p$-adic divided power thickening of $R$. (In particular, one can evaluate a crystal over $R$ at $\Zink(R)$.)

\begin{defnsub}\label{def:display}
A \emph{Dieudonn\'e display} over $R$ is a tuple
\[
(M, M_1, \Phi, \Phi_1),
\]
where
\begin{enumerate}
\item $M$ is a finite free $\Zink(R)$-module;
\item $M_1\subset M$ is a $\Zink(R)$-submodule containing $\BI_R M$, such that $M/M_1$ is free over $R$;
\item $\Phi:M\ra M$ and $\Phi_1:M_1\ra M$ are $\sigma$-linear morphism such that $p\Phi_1 = \Phi|_{M_1}$ and the image of $\Phi_1$ generates $M$.
\end{enumerate}
The \emph{Hodge filtration} associated to the display is $M_1/\BI_R M \subset M/\BI_R M$ viewed as the $0$th filtration (so $M/M_1$ is viewed as the $(-1)$th grading).
\end{defnsub}

Given a Dieudonn\'e display $(M,M_1,\Phi,\Phi_1)$ over $R$, we let $\wt M_1$ denote the image of $\sigma^*M_1$ in $\sigma^*M$. Then the linearisation of $\Phi_1$ induces  the following isomorphism
\[
\Psi:\wt M_1 \riso M.
\]
If $\Zink(R)$ is $p$-torsion free (for example, if $R$ is $p$-torsion free), then given $M$ and $M_1\subset M$ where $M/M_1$ is projective over $R$, giving a $\sig$-linear map $\Phi$ and $\Phi_1$ that makes $(M,M_1,\Phi,\Phi_1)$ a Dieudonn\'e display is equivalent to giving an isomorphism $\Psi$ as above; \emph{cf.} \cite[Lemma~3.1.5]{KisinPappas:ParahoricIntModel}.

For any complete local noetherian ring $R$ with perfect residue field of characteristic $p>2$,
Zink \cite{Zink:DieudonnePDivGpCFT} constructed a natural equivalance between the category of $p$-divisible groups over $R$ and the category of Dieudonn\'e displays over $R$. We can describe this (covariant) equivalence of categories when $\Zink(R)$ is $p$-torsion free in terms of crystalline Dieudonn\'e theory using Lau's result \cite{Lau:DisplayCrystals}, as follows.

Assume that $\Zink(R)$ is $p$-torsion free (for example,  $p$-torsion free $R$). Then for a $p$-divisible group $X$ over $R$, the (covariantly) associated Dieudonn\'e display is given by the following data:
\begin{enumerate}
\item $M:= (\DD(X)(\Zink(R)))^*$ and $M_1:=\ker \left(M \thra (\DD(X)(R))^* \thra \Lie(X)\right)$, where $\DD(X)$ is the contravariant Dieudonn\'e crystal;
\item The crystalline Frobenius $F:\sigma^*M[\ivtd p]\ra M[\ivtd p]$ restricts to an isomorphism $\Psi:\wt M_1\ra M$.
\end{enumerate}

 \begin{rmksub}\label{rmk:DisplayModp}
 Let $X$ be a $p$-divisible group over a $R$ as above (with the extra assumption that $R$ is $p$-torsion free). Let  $(M,M_1,\Phi,\Phi_1)$ denote the associated Dieudonn\'e display.
 Note that the underlying crystal $\DD(X)$ together with the crystalline Frobenius only depends on $X_{R/p}$. Therefore, the underlying $\Zink(R)$-module $M$ and $\Psi:(\sigma^*M)[\ivtd p]\riso M[\ivtd p]$ are determined by $X_{R/p}$.
%
 \end{rmksub}

\subsection{Review of Kisin-Pappas deformation theory}\label{subsec:DeforKP}
We begin with the review of the deformation theory of $p$-divisible groups with tensors in \cite[\S3]{KisinPappas:ParahoricIntModel}. Let us first consider the case of $\GL_n$
 We choose $b\in \GL_n(\Qpbr)$ such that there exists a $p$-divisible group $\BX:=\BX_b$ over $\Fpbar$ with $\big(\DD(\BX)(\Zpbr)\big)^* = (\Zpbr\otimes_{\Zp}\Lambda,b\sig)=:\bM (=\bM_b)$, where $\Lambda = \Zp^n$.
We also view $\bM$ as a Dieudonn\'e display $(\bM,\bM_1,\Phi,\Phi_1)$, where $\bM_1:=(b\sig)\iv(\bM) \subset \bM$, $\Phi_1 = b\sigma$ and $\Phi = pb\sigma$. Since we have $\sigma^*\bM_1\cong \wt\bM_1$, it follows that $\Psi:\wt\bM_1\riso \bM$ coincides with the linearisation of $\Phi_1$.

Note that $\bM/\bM_1 = \Lie\BX$ and $\bM/p\bM \thra \bM/\bM_1$ recovers the natural map $\big(\DD(\BX)(\Fpbar)\big)^*\thra \Lie\BX$ defining the Hodge filtration. Let $R_\GL$ denote the completed local ring of a suitable grassmannian variety over $\Zpbr$ at the point corresponding to $\bM/p\bM \thra \bM/\bM_1$.

In \cite[\S3.1]{KisinPappas:ParahoricIntModel}, Kisin and Pappas constructed a Dieudonn\'e display
\[(M_\GL,M_{\GL,1},\Phi,\Phi_1)\]
over $R_\GL$, which is a universal deformation of $\bM$ with the following properties:
\begin{enumerate}
\item We have $M_\GL = \Zink(R_\GL)\otimes_{\Zpbr}\bM$ as a $\Zink(R_\GL)$-module.
\item Identifying $R_{\GL}$ with the universal deformation ring of the Hodge filtration $\bM/p\bM \thra \bM/\bM_1$ (i.e., the completed local ring of some  grassmannian), 
the  Hodge filtration $M_\GL\otimes_{\Zink(R_\GL)}R_\GL \thra M_\GL/M_{\GL,1}$ is the universal deformation of $\bM/p\bM \thra \bM/\bM_1$.
\item The identification of the reduced tangent spaces of the deformation functor and of $\Spf R_\GL$ is compatible with the one provided by the Grothendieck-Messing deformation theory (in the sense of \cite[Lemma~3.1.15]{KisinPappas:ParahoricIntModel}).
\end{enumerate}

\begin{defnsub}\label{def:LocShDatum}
We recall, from \cite[\S3.2.5]{KisinPappas:ParahoricIntModel}, the definition of a quotient $R_\cG$ of $R_\GL\otimes_{\Zpbr}\fo_{\breve E}$ (where $\fo_{\breve E}$ is a suitable finite extension of $\Zpbr$). Let $\cG_{/\Zp}$ be a Bruhat-Tits integral model of $G_{/\Qp}$ as in Definition~\ref{def:KP}. We additionally assume that $G$ splits after a tame extension of $\Qp$, $p$ does not divide the order of $\pi_1(G^\der)$ (\emph{cf.} (\ref{eqn:RunningAssumption1})), and the adjoint group of $G$ does not have a factor of type $E_8$.\footnote{We assume that $p\nmid |\pi_1(G^\der)|$ to ensure that the local model  $\mathrm{M}^\loc_{\cG^\circ,\set\mu}$ is normal (so the ``deformation ring'' $R_\cG$ is normal).
The rest of the assumptions are made because we need to have \cite[Proposition~1.4.3]{KisinPappas:ParahoricIntModel} for the deformation theory, which is proved under the assumption that $G$ splits after a tame extension and the adjoint group of $G$ has no factor of type $E_8$.}

We choose a  \emph{local Shimura datum}
\[(G,[b],\set{\mu})\]
in the sense of \cite[Definition~5.1]{RapoportViehmann:LocShVar}; in other words, $\set\mu$ is a $G(\ol\Qpbr)$-conjugacy class of minuscule cocharacters of $G$, and $[b]$ is a $\sigma$-$G(\Qpbr)$ conjugacy class that is neutral acceptable for $\set\mu$ in the sense of  \cite[Definition~2.3]{RapoportViehmann:LocShVar}. Let $\breve E\subset\ol\Qpbr$ denote the field of definition of the conjugacy class $\set\mu$ over $\Qpbr$, which is a finite extension of $\Qpbr$.

We choose $b\in[b]$ and a closed immersion $\cG\hra \GL(\Lambda)$ such that the following properties hold:
\begin{enumerate}
\item The image of $G = \cG_{\Qp}$ in $\GL(\Lambda)_{\Qp}$ contains scalar matrices.
\item 
The closed immersion $\cG\hra \GL(\Lambda)$  sends $\set \mu$ to $\set{\nu_d}$  for some $d$, where $\nu_d$ is the minuscule cocharacter of $\GL_n$ as in Example~\ref{exa:SlopeFil}; \emph{cf.} (\ref{eqn:RunningAssumption2}). 
\item 
There exists a $p$-divisible group $\BX:=\BX_b$ over $\Fpbar$ with $\big(\DD(\BX)(\Zpbr)\big)^* = (\Zpbr\otimes_{\Zp}\Lambda,b\sig)=:\bM (=\bM_b)$ as a virtual $F$-crystal. Furthermore, there exists a cocharacter $\mu_\BX:\Gm\to\GL(\Lambda)_{\fo_K}$ over some finite extension $\fo_K$ of $\fo_{\breve E}$, such that its generic fibre factors through $G_K$ and its special fibre induces the Hodge filtration of $\BX$.
%
\end{enumerate}
Under these assumptions, the identity right coset $\breve\KK \in G(\Qpbr)/\breve\KK$ belongs to $X_{\breve\KK}(b;\sigma^*\mu)$; \emph{cf.} \cite[\S5.4]{Zhou:ModpPts}. 
In this case, we have constructed the formal closed subgroup scheme $\Qisg_G(\BX)\subset \Qisg(\BX)$ (\emph{cf.} Definition~\ref{def:Qisg}), which can be applied since we have (\ref{eqn:RunningAssumption1}, \ref{eqn:RunningAssumption2}) and $X_{\breve\KK}(b;\sigma^*\mu)\ne 0$ (with $\nu=\sig^*\mu$).

Such $b\in[b]$ and $\cG\hra \GL(\Lambda)$ exist if the local Shimura datum $(G,[b],\set\mu)$ comes from a Hodge-type Shimura datum (using \cite[Corollary~2.3.16]{KisinPappas:ParahoricIntModel}), but they may not exist for an arbitrary local Shimura datum.

%
\end{defnsub}

Recall that we have a natural closed immersion $\mathrm{M}^\loc_{\cG^\circ,\set\mu}\hra (\mathrm{M}^\loc_{\GL(\Lambda),\set{\nu_d}})_{\fo_{\breve E}}$ of Pappas-Zhu local models (\emph{cf.} (\ref{eqn:EmbLocMod})),  and $\mathrm{M}^\loc_{\GL(\Lambda),\set{\nu_d}}$ is  the grassmannian  over $\Zpbr$ classifying rank-$d$ quotients. Furthermore, the Hodge filtration of $\BX$ lies in the image of $\mathrm{M}^\loc_{\cG^\circ,\set\mu}$; indeed, the filtration defined by the cocharacter $\mu_{\BX}$ gives an $\fo_K$-point of the local model $\mathrm{M}^\loc_{\cG^\circ,\set\mu}$; \emph{cf.} \cite[\S3.2.5]{KisinPappas:ParahoricIntModel}.

We identify $R_\GL$ as the completed local ring of the grassmannian $\mathrm{M}^\loc_{\GL(\Lambda),\set\mu}$ at the $\Fpbar$-point corresponding to the Hodge filtration of $\BX$. We set $R_\cG$ as the completion of $\mathrm{M}^\loc_{\cG^\circ,\set\mu}$ at the same $\Fpbar$-point.
Using the assumption that $p$ does not divide the order of $\pi_1(G^\der)$, it follows that $R_\cG\otimes_{\fo_{\breve E}}\fo_K$ is normal for any finite extension $K$ of $\breve E$; \emph{cf.} \cite[Corollary~2.1.3]{KisinPappas:ParahoricIntModel}.

We choose finitely many tensors $(s_\alpha)\subset \Lambda^\otimes$ whose pointwise stabiliser is (the image of) $\cG$ inside $\GL(\Lambda)$. Using $\bM = \Zpbr\otimes_{\Zp}\Lambda$, we view
$(s_\alpha)\subset\bM^\otimes$.
Furthermore, by viewing $\wt\bM_1 \subset (\sigma^*\bM)[\ivtd p] =\Qpbr\otimes_{\Zp}\Lambda$, we may view $(s_\alpha)\subset (\wt\bM_1)^\otimes[\ivtd p]$. And since $\Phi_1 = b\sigma$ with $b\in G(\Qpbr)$, it follows that  $(s_\alpha)\subset(\wt\bM_1)^\otimes$, and $\Psi:\wt\bM_1\riso \bM$ preserves the tensors $(s_\alpha)$. (Here, $\wt\bM_1$ is the image of $\sigma^*\bM_1$ in $\sigma^*\bM$, and $\Psi$ is the isomorphism induced by the linearisation of $\Phi_1$.)

Let $(M_\cG, M_{\cG,1}, \Phi,\Phi_1)$ denote the base change of the universal deformation of displays $(M_\GL,M_{\GL,1},\Phi,\Phi_1)$ to $R_\cG$. Using $M_\cG = \Zink(R_\cG)\otimes_{\Zpbr}\bM$, we view  $(s_\alpha)$ as elements in  $  M_\cG^\otimes$ or in $(\sigma^* M_\cG)^\otimes$. As before, we denote by $\wt M_{\cG,1}$ the image of $\sigma^* M_{\cG,1}$ in $\sigma^* M_\cG$, and $\Psi:\wt M_{\cG,1}\riso M_\cG$ the isomorphism induced by the linearisation of $\Phi_1$. 

The following proposition can be deduced from \cite[\S3.2]{KisinPappas:ParahoricIntModel}.
\begin{propsub}
In the setting of Definition~\ref{def:LocShDatum}, the tensors $(s_\alpha)\subset (\sigma^* M_\cG)^\otimes$ lie in $ \wt M_{\cG,1}^\otimes$. Furthermore, $\Psi:\wt M_{\cG,1}\riso M_\cG$ preserves the tensors $(s_\alpha)$.
\end{propsub}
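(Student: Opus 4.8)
The assertion is, in the present notation, the statement of \cite[\S3.2]{KisinPappas:ParahoricIntModel} that the universal Dieudonn\'e display over $R_\cG$ carries a $\cG$-structure, and the plan is to recall the argument. I would first reduce both clauses to one. Recall from \S\ref{subsec:DeforKP} the canonical identification $\sigma^* M_\cG = \Zink(R_\cG)\otimes_{\Zp}\Lambda = M_\cG$, valid because $\Lambda$ is defined over $\Zp$ and $\sigma$ fixes $\Zp$; under it $\wt M_{\cG,1}$ is a $\Zink(R_\cG)$-lattice in $M_\cG$ containing $pM_\cG$, so $M_\cG[\ivtd p] = \wt M_{\cG,1}[\ivtd p] =: V$, and $\Psi\colon\wt M_{\cG,1}\riso M_\cG$ becomes an element of $\GL(\Lambda)(\Zink(R_\cG)[\ivtd p])$. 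Since $\cG\hra\GL(\Lambda)$ is the pointwise stabiliser of $(s_\alpha)$ over any $\Zp$-algebra (Proposition~\ref{prop:Chevalley}), the proposition is equivalent to the single claim that $\Psi\in\cG(\Zink(R_\cG)[\ivtd p])$: granting it, $\Psi$ fixes $(s_\alpha)\subset V^\otimes$, and since $\Psi$ carries the lattice $\wt M_{\cG,1}$ isomorphically onto $M_\cG$ it carries $\wt M_{\cG,1}^\otimes$ onto $M_\cG^\otimes$, so the $\Psi$-fixed element $(s_\alpha)$, lying in $M_\cG^\otimes$, already lies in $\wt M_{\cG,1}^\otimes$; and then $\Psi\colon\wt M_{\cG,1}\riso M_\cG$ preserves $(s_\alpha)$.

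It remains to see that $\Psi\in\cG(\Zink(R_\cG)[\ivtd p])$. By the construction reviewed in \S\ref{subsec:DeforKP} (following \cite[\S3.1]{KisinPappas:ParahoricIntModel} and Faltings' recipe), over $R_\GL$ the isomorphism $\Psi$ is produced from the special-fibre isomorphism $\Psi_\bM\colon\wt\bM_1\riso\bM$ --- which lies in $\cG(\Qpbr)$, since $\Phi_1 = b\sigma$ with $b\in G(\Qpbr)$ forces $\Psi_\bM$ to preserve $(s_\alpha)$ (noted just before the statement of the proposition) --- together with the universal Hodge filtration on $\mathrm M^\loc_{\GL(\Lambda),\set{\nu_d}}$ and an auxiliary splitting cocharacter; \emph{a priori} this gives only $\Psi\in\GL(\Lambda)(\Zink(R_\GL)[\ivtd p])$. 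Now $R_\cG$ is the completed local ring of the Pappas--Zhu local model $\mathrm M^\loc_{\cG,\set\mu}\hra(\mathrm M^\loc_{\GL(\Lambda),\set{\nu_d}})_{\fo_{\breve E}}$, which is constructed inside the Beilinson--Drinfeld affine grassmannian attached to $\cG^\circ$; thus the universal lattice over $R_\cG$ is a point of the $\cG^\circ$-affine grassmannian, not merely of the $\GL(\Lambda)$-one, and Kisin and Pappas show that the Frobenius structure $\Psi$ produced from such $\cG^\circ$-adapted data lies in $\cG(\Zink(R_\cG)[\ivtd p])$: one trivialises the tautological $\cG^\circ$-torsor over $\Spf R_\cG$ --- possible as it is a complete local ring with residue field $\Fpbar$ --- which reduces the recipe for $\Psi$ to manipulations involving only $\Psi_\bM\in\cG(\Qpbr)$ and $\cG$-valued transition data. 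This is the content of \cite[\S3.2]{KisinPappas:ParahoricIntModel}, and it is where the standing hypotheses on $\cG$ enter --- in particular $p\nmid|\pi_1(G^\der)|$, which ensures that $\mathrm M^\loc_{\cG,\set\mu}$, and hence $R_\cG$, is normal.

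The main obstacle is the second paragraph: showing that passing from the $\GL(\Lambda)$-grassmannian to the Pappas--Zhu local model for $\cG$ genuinely forces the deformation-theoretic $\Psi$ into $\cG$, \emph{including at the base point}, where $\mathrm M^\loc_{\cG,\set\mu}$ may be singular, $R_\cG$ is then not a power series ring, and the ``$\cG$-orbit'' description of the universal filtration is available only after inverting $p$ (or on the open cell). Carrying this out rests on the detailed construction of the Pappas--Zhu local models \cite{PappasZhu:LocMod} and on the display-theoretic analysis of \cite[\S3.2]{KisinPappas:ParahoricIntModel} --- for instance, establishing the $\cG$-structure over the regular locus $\Spec R_\cG[\ivtd p]$ and then extending it to $R_\cG$ by normality. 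Everything else above is formal, using only that $\Zink(R_\cG)$ is $p$-torsion free and that $(s_\alpha)\subset\Lambda^\otimes$ is defined over $\Zp$.
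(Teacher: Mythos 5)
Your opening reduction is correct and is arguably a cleaner packaging of the statement than the one in the paper: under the identification $\sigma^*M_\cG = \Zink(R_\cG)\otimes_{\Zp}\Lambda = M_\cG$, both clauses do follow formally from the single claim that $\Psi\in\cG(\Zink(R_\cG)[\ivtd p])$, since a tensor fixed by $\Psi^\otimes$ and lying in $M_\cG^\otimes=\Psi^\otimes(\wt M_{\cG,1}^\otimes)$ must already lie in $\wt M_{\cG,1}^\otimes$. The difficulty is that you never actually prove that single claim. You assert it is ``the content of [KP, \S3.2]'' via a torsor-trivialisation sketch, and then concede in your final paragraph that this is precisely the main obstacle. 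That concession is well placed: Kisin--Pappas do not state the global assertion $\Psi\in\cG(\Zink(R_\cG)[\ivtd p])$ in citable form. What they provide is (a) the lattice containment $(s_\alpha)\subset\wt M_{\cG,1}^\otimes$ (their Corollary~3.2.11), which is exactly the first clause, and (b) the Frobenius-equivariance of the tensors only after specialisation along each finite point $\xi\colon R_\cG\to\fo_K$ (their Proposition~3.2.17(2)). The genuine mathematical content of the paper's proof is the globalisation of (b): since $R_\cG[\ivtd p]$ is Jacobson, the element $\Psi^\otimes(s_\alpha)-s_\alpha$ vanishes after scalar extension to $W(R_\cG[\ivtd p])$, and $\Zink(R_\cG)$ injects into $W(R_\cG[\ivtd p])$ because $R_\cG$ is $p$-torsion free. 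Your proposal supplies no substitute for this step.

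Moreover, the fallback you suggest --- establish the $\cG$-structure over the regular locus $\Spec R_\cG[\ivtd p]$ and extend to $R_\cG$ by normality --- does not address the right object: the claim lives over $\Zink(R_\cG)[\ivtd p]$, not over $R_\cG[\ivtd p]$, and normality of $R_\cG$ gives no direct handle on the Zink ring. If you want to keep your (good) reduction, the repair is to verify $\Psi\in\cG(\Zink(R_\cG)[\ivtd p])$ by exactly the paper's specialisation argument: by Proposition~\ref{prop:Chevalley} this membership is equivalent to $\Psi^\otimes$ fixing the $(s_\alpha)$, which is checked at all $\fo_K$-points via [KP, Proposition~3.2.17(2)] and then propagated to $\Zink(R_\cG)[\ivtd p]\subset W(R_\cG[\ivtd p])$ using that $R_\cG[\ivtd p]$ is Jacobson. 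As written, your second paragraph replaces this argument with an appeal to the Pappas--Zhu construction that is neither carried out nor backed by a precise reference, so the proof is incomplete at its crucial point.
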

\begin{proof}
The claim that $(s_\alpha)\subset  \wt M_{\cG,1}^\otimes$ is proved in \cite[Corollary~3.2.11]{KisinPappas:ParahoricIntModel}. Note that for any $\xi:R_\cG\ra \fo_K$ for any finite extension $\fo_K$ of $\fo_{\breve E}$, the scalar extension $\Psi$ via $\xi$  preserves the tensors $(s_\alpha)$ by \cite[Proposition~3.2.17(2)]{KisinPappas:ParahoricIntModel}.  Since $R_\cG[\ivtd p]$ is Jacobson, it follows that $\Psi$ preserves the tensor after the scalar extension to $W(R_\cG[\ivtd p])$. As $\Zink(R_\cG)$ is a subring of $W(R_\cG[\ivtd p])$, we conclude that $\Psi:\wt M_{\cG,1}\riso M_\cG$ preserves the tensors $(s_\alpha)$.
\end{proof}

For a finite extension $\fo$ of $\fo_{\breve E}$, one can characterise which deformation $X$ over $\fo$ defines an $\fo$-point of $R_\cG$ via the lifts of crystalline tensors $(s_\alpha)$; \emph{cf.} \cite[Proposition~3.2.17]{KisinPappas:ParahoricIntModel}. We will recall the statement in  Proposition~\ref{prop:KPDeforThy}. For this, we need some preparation.

Let $\xi:R_\GL \ra \fo$ where $\fo$ is a finite extension of $\fo_{\breve E}$, and let $(M_\xi,M_{\xi,1},\Psi)$ denote deformation of $(\bM, \bM_1,\Psi)$ corresponding to $\xi$. Then by \cite[Lemma~3.1.17]{KisinPappas:ParahoricIntModel}, there exists a unique $\Psi$-equivariant isomorphism
\begin{equation}\label{eqn:IsocRig}
\Zink(\fo)\otimes_{\Zpbr}\bM [\ivtd p] \riso M_\xi[\ivtd p]
\end{equation}
which reduces to the identity map on $\bM_\xi[\ivtd p]$ after the scalar extension by $\Zink(\fo)[\ivtd p] \thra \Zink(\Fpbar)[\ivtd p] = \Qpbr$. Alternatively, one can obtain this isomorphism as follows. Let $X_\xi$ be the $p$-divisible group associated to $(M_\xi,M_{\xi,1},\Psi)$, and consider the unique quasi-isogeny $\iota_\xi:X_{\xi,\fo/p}\dra \BX_{\fo/p} $ lifting the identity map on $\BX$. Then $\iota_\xi$ induces the isomorphism of $F$-isocrystals $\DD(X_{\xi,\fo/p})^*[\ivtd p] \riso \DD(\BX_{\fo/p})^*[\ivtd p]$, and the map on the $\Zink(\fo)$-sections coincides with the isomorphism (\ref{eqn:IsocRig}).

Using this isomorphism~(\ref{eqn:IsocRig}), we obtain the tensors
\begin{equation}\label{eqn:IsocTensors}
(s_\alpha) \subset (M_\xi)^\otimes[\ivtd p].
\end{equation}
By the discussion earlier, $(s_\alpha) \subset (M_\xi)^\otimes[\ivtd p]$ can also be obtained from the $\Zink(\fo)$-sections of the maps of $F$-isocrystals, also denoted by $(s_\alpha)$:
\begin{equation}\label{eqn:IsocTensorsMorph}
s_\alpha:\triv\ra  (\DD(X_{\xi,\fo/p})^*)^\otimes[\ivtd p].
\end{equation}

Note that the scalar extension by $\Zink(\fo)\thra\Zink(\Fpbar)$ sends the tensors $(s_\alpha)\subset M_\xi^\otimes[\ivtd p]$ (\ref{eqn:IsocRig}) to $(s_\alpha)\subset\bM^\otimes[\ivtd p]$, the tensors  that we started with.

\begin{propsub}\label{prop:KPDeforThy}
In the setting as above, $\xi:R_\GL\ra \fo$ factors through $R_\cG$ if and only if the following condition holds:
\begin{enumerate}
\item\label{prop:KPDeforThy:pInt}
The  tensors $(s_\alpha) \subset M_\xi^\otimes[\ivtd p]$, defined in (\ref{eqn:IsocTensors}), lie in $M_\xi^\otimes$.
\item\label{prop:KPDeforThy:Torsor}
There exists an $\Zink(\fo)$-linear isomorphism
\[
\Zink(\fo)\otimes_{\Zpbr}\bM \cong M_\xi,
\]
preserving the tensors  $(s_\alpha)$.
\item\label{prop:KPDeforThy:HodgeFil}
Let $K:=\fo[\ivtd p]$.
Using the isomorphism (\ref{eqn:IsocRig}) modulo $\BI_\fo$
\[K\otimes_{\Zp}\Lambda = K\otimes_{\Zpbr}\bM \cong K\otimes_{\Zink(\fo)}M_\xi,\]
the Hodge filtration $(M_{\xi,1}/\BI_\fo M_\xi)[\ivtd p]\subset K\otimes_{\Zp}\Lambda$ is given by a $G$-valued cocharacter $\mu_\xi:\Gm\ra G_K$ that belongs to the geometric conjugacy class of cocharacters $\set\mu$ associated to $b\in G(\Qpbr)$ in the sense of Definition~\ref{def:LocShDatum}.
\end{enumerate}
\end{propsub}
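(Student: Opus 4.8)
The plan is to recognise this proposition as \cite[Proposition~3.2.17]{KisinPappas:ParahoricIntModel}, transcribed into the Dieudonn\'e-display language set up above; the proof then amounts to checking that the two formulations carry the same information. The ingredients needed are already in place. First, $R_\cG$ is by construction the completion of the Pappas--Zhu local model $\mathrm{M}^\loc_{\cG,\set\mu}$, which sits inside $(\mathrm{M}^\loc_{\GL(\Lambda),\set{\nu_d}})_{\fo_{\breve E}}$ as an $\fo_{\breve E}$-flat closed subscheme and is therefore the scheme-theoretic closure there of its generic fibre; moreover $R_\cG\otimes_{\fo_{\breve E}}\fo$ is normal, hence $p$-torsion free, by \cite[Corollary~2.1.3]{KisinPappas:ParahoricIntModel}. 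Second, by the discussion preceding the proposition, under the rigidification~(\ref{eqn:IsocRig}) the tensors $(s_\alpha)\subset M_\xi^\otimes[\ivtd p]$ of~(\ref{eqn:IsocTensors}) agree with the $\Zink(\fo)$-sections of the crystalline realisations~(\ref{eqn:IsocTensorsMorph}) of the $s_\alpha$, transported along the canonical quasi-isogeny $\iota_\xi\colon X_{\xi,\fo/p}\dra\BX_{\fo/p}$; this lets one pass freely between ``$(s_\alpha)$ lifts to $M_\xi^\otimes$'' here and the integrality of the de Rham/crystalline Tate cycles in \cite{KisinPappas:ParahoricIntModel}.

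Granting this, the two directions are short. If $\xi$ factors through $R_\cG$, then $(M_\xi,M_{\xi,1},\Psi)$ is the base change along $R_\cG\to\fo$ of the universal display $(M_\cG,M_{\cG,1},\Phi,\Phi_1)$; conditions~(\ref{prop:KPDeforThy:pInt}) and~(\ref{prop:KPDeforThy:Torsor}) --- integrality of the crystalline tensors, and triviality of the $\cG$-torsor of tensor-preserving trivialisations of $M_\xi$ --- are exactly what the construction of $R_\cG$ via the local model provides, by \cite[Corollary~3.2.11]{KisinPappas:ParahoricIntModel} and \cite[Proposition~3.2.17]{KisinPappas:ParahoricIntModel}, while condition~(\ref{prop:KPDeforThy:HodgeFil}) holds because the Hodge filtration of the universal deformation over $R_\cG$ is classified by the closed immersion $\mathrm{M}^\loc_{\cG,\set\mu}\hra\mathrm{M}^\loc_{\GL(\Lambda),\set{\nu_d}}$, whose generic fibre is the flag variety of $G$ parametrising filtrations of type $\set\mu$. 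Conversely, assume~(\ref{prop:KPDeforThy:pInt})--(\ref{prop:KPDeforThy:HodgeFil}). Since $R_\cG$ is $p$-torsion free and is the scheme-theoretic closure of its generic fibre in $\Spf(R_\GL\otimes_{\Zpbr}\fo_{\breve E})$, and $\fo$ is a complete discrete valuation ring, $\xi$ factors through $R_\cG$ as soon as $\xi[\ivtd p]$ does; condition~(\ref{prop:KPDeforThy:HodgeFil}) places the corresponding $K$-point of $\mathrm{M}^\loc_{\GL(\Lambda),\set{\nu_d}}$ in the generic fibre of $\mathrm{M}^\loc_{\cG,\set\mu}$, and~(\ref{prop:KPDeforThy:pInt}),~(\ref{prop:KPDeforThy:Torsor}) upgrade this to the integral assertion --- that $(M_\xi,\Psi,(s_\alpha))$ is, \'etale-locally on $\Zink(\fo)$, pulled back from $R_\cG$ --- which is again \cite[Proposition~3.2.17]{KisinPappas:ParahoricIntModel}.

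The genuine mathematical input --- that the crystalline realisations of the Tate cycles on a deformation satisfying~(\ref{prop:KPDeforThy:HodgeFil}) are integral, and that the $\cG$-torsor they cut out is trivial exactly when the point lies on $\mathrm{M}^\loc_{\cG,\set\mu}$ --- is the substance of \cite[\S3.2]{KisinPappas:ParahoricIntModel}, which I would cite rather than reprove. Accordingly the only real work is the bookkeeping of the dictionary: that the tensors~(\ref{eqn:IsocTensors}), defined here via the isocrystal rigidification~(\ref{eqn:IsocRig}), coincide with the de Rham/crystalline realisations used in \cite{KisinPappas:ParahoricIntModel} (the identification with~(\ref{eqn:IsocTensorsMorph}) recalled above), and that condition~(\ref{prop:KPDeforThy:HodgeFil}) --- the Hodge filtration being induced by a cocharacter in $\set\mu$ --- is the same as the condition there that the filtration define a point of $\mathrm{M}^\loc_{\cG,\set\mu}$. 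I expect this last matching, together with keeping track of base points and of the passage between $\Zink(\fo)$- and $W(R_\cG[\ivtd p])$-coefficients, to be the only slightly delicate point; conceptually there is no obstacle once \cite[Proposition~3.2.17]{KisinPappas:ParahoricIntModel} is granted.
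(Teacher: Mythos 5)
Your proposal is correct and follows essentially the same route as the paper: the paper's proof likewise consists of invoking \cite[Proposition~3.2.17]{KisinPappas:ParahoricIntModel} for both directions, with the only substantive verification being that the tensors \eqref{eqn:IsocTensors}, defined via the rigidification \eqref{eqn:IsocRig}, coincide with the tensors $(\tilde s_\alpha)$ of \emph{loc.~cit.} The paper disposes of exactly this matching --- the ``slightly delicate point'' you flag --- by citing \cite[Lemma~3.2.13]{KisinPappas:ParahoricIntModel}.
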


\begin{proof}
By \cite[Proposition~3.2.17, Lemma~3.2.13]{KisinPappas:ParahoricIntModel}, if $\xi$ factors through $R_\cG$ then the tensors $(s_\alpha)$ satisfy all the conditions listed in the statement; note that the tensors $(\tilde s_\alpha)\subset M_\xi^\otimes$ in \cite[Proposition~3.2.17]{KisinPappas:ParahoricIntModel} should coincide with our $(s_\alpha)$ in (\ref{eqn:IsocTensors}) by \cite[Lemma~3.2.13]{KisinPappas:ParahoricIntModel}. The converse follows from \cite[Proposition~3.2.17]{KisinPappas:ParahoricIntModel}.
\end{proof}
\begin{defnsub}
In the setting of Proposition~\ref{prop:KPDeforThy}, assume that $\xi:R_\GL\ra \fo$  satisfies Proposition~\ref{prop:KPDeforThy}(\ref{prop:KPDeforThy:HodgeFil}). (This is satisfied if $\xi$ factors through $R_\cG$.) Let $T(X_\xi)$ denote the integral Tate module of $X_\xi$. Then by the crystalline comparison isomorphism $\Bcris\otimes_{\Zp}T(X_\xi)\cong \Bcris\otimes_{\Zpbr}\bM$, we obtain  Galois-invariant tensors
\[(s_{\alpha,\et})\subset T(X_\xi)^\otimes[\ivtd p].\]
\end{defnsub}

Let us now recall the statement of \cite[Proposition~3.3.13]{KisinPappas:ParahoricIntModel}:
\begin{propsub}\label{prop:KPEtaleCycles}
The map $\xi:R_\GL\ra\fo$ factors through $\fo$ if the following conditions hold:
\begin{enumerate}
\item\label{prop:KPEtaleCycles:HodgeFil} The map $\xi$ satisfies Proposition~\ref{prop:KPDeforThy}(\ref{prop:KPDeforThy:HodgeFil}).
\item\label{prop:KPEtaleCycles:Torsor} We have $(s_{\alpha,\et})\subset T(X_\xi)^\otimes$, and there exists a $\Zpbr$-linear isomorphism $\bM \cong \Zpbr\otimes_{\Zp} T(X_\xi)$ matching $(s_\alpha)$ with $(s_{\alpha,\et})$.
\end{enumerate}
\end{propsub}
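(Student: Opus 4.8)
This is \cite[Proposition~3.3.13]{KisinPappas:ParahoricIntModel}, and the plan is to show that the hypotheses force conditions~(\ref{prop:KPDeforThy:pInt}) and~(\ref{prop:KPDeforThy:Torsor}) of Proposition~\ref{prop:KPDeforThy} to hold — hypothesis~(\ref{prop:KPEtaleCycles:HodgeFil}) being literally condition~(\ref{prop:KPDeforThy:HodgeFil}) there — after which Proposition~\ref{prop:KPDeforThy} applies. The bridge between the étale and crystalline realizations is the integral crystalline comparison isomorphism for the $p$-divisible group $X_\xi$ over $\fo$.

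First I would record a $\Gal(\bar K/K)$- and Frobenius-equivariant isomorphism
\[
\beta\colon T(X_\xi)\otimes_{\Zp}\Acris \riso M_\xi\otimes_{\Zink(\fo)}\Acris,
\]
compatible, after inverting $p$, both with the comparison $\Bcris\otimes_{\Zp}T(X_\xi)\cong\Bcris\otimes_{\Zpbr}\bM$ that defines the tensors $(s_{\alpha,\et})$ and with the rigidification~(\ref{eqn:IsocRig}) that defines the crystalline tensors $(s_\alpha)\subset M_\xi^\otimes[\ivtd p]$; in particular $\beta$ carries $(s_{\alpha,\et})$ to $(s_\alpha)$. By hypothesis~(\ref{prop:KPEtaleCycles:Torsor}) we have $(s_{\alpha,\et})\subset T(X_\xi)^\otimes$, so $\beta(s_{\alpha,\et})=s_\alpha$ lies in $(M_\xi\otimes_{\Zink(\fo)}\Acris)^\otimes$; combined with $(s_\alpha)\in M_\xi^\otimes[\ivtd p]$ and a descent lemma for $\Zink(\fo)\hookrightarrow\Acris$ of the type used by Kisin (\emph{cf.}~\cite[Proposition~1.3.2]{Kisin:IntModelAbType}, \cite[Corollary~3.2.11]{KisinPappas:ParahoricIntModel}) — which uses that $M_\xi$ is finite free and that $\Zink(\fo)=\Zink(\fo)[\ivtd p]\cap\Acris$ — one concludes $(s_\alpha)\subset M_\xi^\otimes$. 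This is condition~(\ref{prop:KPDeforThy:pInt}).

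For condition~(\ref{prop:KPDeforThy:Torsor}) one invokes hypothesis~(\ref{prop:KPEtaleCycles:Torsor}) once more: the chosen $\Zpbr$-isomorphism $\bM\cong\Zpbr\otimes_{\Zp}T(X_\xi)$ matching $(s_\alpha)$ with $(s_{\alpha,\et})$, together with $\beta$, identifies $(M_\xi,(s_\alpha))$ with $(\Zink(\fo)\otimes_{\Zpbr}\bM,(s_\alpha))$ after base change to $\Acris$, so the affine $\Zink(\fo)$-scheme $P=\underline{\Isom}\big((\Zink(\fo)\otimes_{\Zpbr}\bM,(s_\alpha)),(M_\xi,(s_\alpha))\big)$ of tensor-preserving isomorphisms is a torsor under $\cG_{\Zink(\fo)}$ (\emph{cf.}~\cite[\S3.2]{KisinPappas:ParahoricIntModel}); its base change along $\Zink(\fo)\to\Fpbar$ carries the point $\mathrm{id}_{\bM/p\bM}$, which is tensor-preserving because $\bM/p\bM=\DD(\BX)(\Fpbar)^*$ carries the reduction of $(s_\alpha)$. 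Since $\cG$ is smooth and $\Zink(\fo)$ is henselian local with residue field $\Fpbar$, this point lifts to $P(\Zink(\fo))$, which is condition~(\ref{prop:KPDeforThy:Torsor}). Proposition~\ref{prop:KPDeforThy} now yields the statement.

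The crux is the integrality descent in the second paragraph: setting up the integral comparison $\beta$ in precisely the shape adapted to Dieudonné displays over $\fo$, and proving the intersection identity $\Zink(\fo)=\Zink(\fo)[\ivtd p]\cap\Acris$ — both of which rest on the analysis of \cite[\S3.1--\S3.2]{KisinPappas:ParahoricIntModel}, where $\Acris$ already plays this auxiliary role. Granting these inputs, the torsor argument and the appeal to Proposition~\ref{prop:KPDeforThy} are formal.
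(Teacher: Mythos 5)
The paper does not actually prove this proposition: as the surrounding text makes explicit, it is quoted (in a slightly restricted form, see Remark~\ref{rmk:NonNeutral}) from \cite[Proposition~3.3.13]{KisinPappas:ParahoricIntModel}. So the comparison is really with Kisin--Pappas's argument. Your overall frame --- derive conditions (\ref{prop:KPDeforThy:pInt}) and (\ref{prop:KPDeforThy:Torsor}) of Proposition~\ref{prop:KPDeforThy} from the \'etale hypotheses and then invoke that proposition --- is the right one. But the bridge you build between the \'etale and crystalline sides is not the one that works, and it has concrete gaps.

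First, the map $\beta$ you posit is not an isomorphism over $\Acris$: the integral comparison $T(X_\xi)\otimes_{\Zp}\Acris\to M_\xi\otimes_{\Zink(\fo)}\Acris$ is only injective with cokernel killed by Fontaine's element $t$ (it becomes an isomorphism only after inverting $t$, resp.\ $\mu$), so integrality of $(s_{\alpha,\et})$ in $T(X_\xi)^\otimes$ does not directly give integrality of $(s_\alpha)$ in $(M_\xi\otimes_{\Zink(\fo)}\Acris)^\otimes$. Second, the intersection identity $\Zink(\fo)=\Zink(\fo)[\ivtd p]\cap\Acris$ is asserted but not proved, and it is not the mechanism used in \emph{loc.~cit.} Third, to conclude that $P$ is a $\cG$-torsor over $\Zink(\fo)$ from a trivialisation over $\Acris$ you would need $\Zink(\fo)\to\Acris$ to be faithfully flat, which is not available. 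Kisin--Pappas circumvent all three problems by interposing the Breuil--Kisin module $\mathfrak{M}(T(X_\xi))$ over $\mathfrak{S}=W(k)[[u]]$: the \'etale tensors are first transported to $\varphi$-invariant tensors in $\mathfrak{M}^\otimes$ by a lattice argument in the exact tensor functor $\mathfrak{M}$; the scheme of tensor-preserving trivialisations is shown to be a $\cG$-torsor over all of $\Spec\mathfrak{S}$ by extending a torsor from the punctured spectrum of this two-dimensional regular local ring (\cite[Proposition~1.4.3]{KisinPappas:ParahoricIntModel}, which is precisely where the tameness and no-$E_8$ hypotheses enter); and only then does one specialise along $\mathfrak{S}\to\Zink(\fo)$ via Lau's comparison to obtain conditions (\ref{prop:KPDeforThy:pInt}) and (\ref{prop:KPDeforThy:Torsor}) of Proposition~\ref{prop:KPDeforThy}. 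If you want a self-contained argument, you should route it through $\mathfrak{S}$ rather than through $\Acris$.
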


\begin{rmksub}\label{rmk:NonNeutral}
We stated Proposition~\ref{prop:KPEtaleCycles} in a slightly more restrictive form than  \cite[Proposition~3.3.13]{KisinPappas:ParahoricIntModel}; namely, Proposition~\ref{prop:KPEtaleCycles}(\ref{prop:KPEtaleCycles:Torsor}) is more stringent than \emph{loc. cit}. Indeed, assuming that $\xi:R_\cG\ra\fo$ satisfies that $(s_{\alpha,\et})\subset T(X_\xi)^\otimes$, it seems that the following $\Zp$-scheme
\[\Isom([\Lambda,(s_\alpha)], [T(X_\xi), (s_{\alpha,\et})])\]
may be a non-trivial $\cG$-torsor if $\cG\ne\cG^\circ$, so Proposition~\ref{prop:KPEtaleCycles}(\ref{prop:KPEtaleCycles:Torsor}) may not be satisfied. 

It is possible, with a little more work,  to improve Proposition~\ref{prop:KPEtaleCycles} to a necessary and sufficient condition.
On the other hand,  we are interested in the deformation ring $R_\cG$ that occurs as the completed local ring of some parahoric-level integral model of Hodge-type Shimura varieties constructed in \cite{KisinPappas:ParahoricIntModel}. In that case, we can arrange so that any $\xi:R_\cG\ra\fo$ satisfies Proposition~\ref{prop:KPEtaleCycles}(\ref{prop:KPEtaleCycles:Torsor}). (
See \cite[\S4.1.7]{KisinPappas:ParahoricIntModel} for more details.)
\end{rmksub}

\subsection{The action of $\Qisg^\circ_G(\BX)$ on $\Spf R_\cG$}\label{subsec:QisgRZ}
Recall that $\Spf R_\GL$ can be regarded as the completion of some Rapoport-Zink space. In other words, for any local $\Zpbr$-algebra $R$ where $p$ is nilpotent, $\Hom_{\Zpbr}(R_\GL, R)$ is in natural bijection with the isomorphism class of pairs $(X,\iota)$ where $X$ is a deformation of $\BX$ over $R$ and $\iota:X_{R/p}\dra \BX_{R/p}$ is a quasi-isogeny lifting $\id_{\BX}:\BX\ra\BX$. Therefore, we get a natural action of $\Qisg^\circ(\BX)$ on $\Spf R_\GL$ as follows: $\gamma\in \Qisg_G(\BX_b)(R)$ sends $(X,\iota)$ to $(X,\gamma\circ\iota)$. (Indeed, the full self quasi-isogeny group $\Qisg(\BX)$ naturally act on the Rapoport-Zink space, and it restricts to this  $\Qisg^\circ(\BX)$-action on the completion $\Spf R_\GL$.)

Recall that the construction $\Qisg_G(\BX)\subset \Qisg(\BX)$ can be applied to the setting of Definition~\ref{def:LocShDatum}. 
The goal of this section is to prove the following theorem:
\begin{thmsub}\label{thm:QisgActionRZ}
Assume that any $\xi: \Spf\fo\ra\Spf R_\cG$  satisfies Proposition~\ref{prop:KPEtaleCycles}(\ref{prop:KPEtaleCycles:Torsor}), where $\fo$ is a finite extension of $\fo_{\breve E}$.\footnote{As explained in Remark~\ref{rmk:NonNeutral}, this assumption can be arranged if $R_\cG$ came from some integral model of Shimura varieties constructed in \cite{KisinPappas:ParahoricIntModel}. 
}
Then the natural $\Qisg^\circ(\BX)$-action on $\Spf R_\GL$ restricts to a natural $\Qisg^\circ_G(\BX)$-action on $\Spf R_\cG$; in other words, the morphism of formal schemes
\begin{equation}\label{eqn:QisgActionRZ}
\Qisg_G^\circ(\BX) \times_{\Spf\Zpbr} \Spf R_\cG \ra\Spf (R_\GL\otimes_{\Zpbr}\fo_{\breve E}),
\end{equation}
defined by the natural $\Qisg^\circ(\BX)$-action on $\Spf R_\GL$, factors through $\Spf R_\cG$.
\end{thmsub}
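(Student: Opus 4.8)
The plan is to reduce the factorisation of~\eqref{eqn:QisgActionRZ} to a statement about points valued in ``perfectoid-like'' rings, and then to play the Kisin--Pappas description of $R_\cG$ inside $R_\GL\otimes_{\Zpbr}\fo_{\breve E}$ off against the fact that $\Qisg^\circ_G(\BX)$ consists, by construction (Corollary~\ref{cor:TenHom}), of \emph{tensor-preserving} quasi-isogenies.

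First, unwind~\eqref{eqn:QisgActionRZ} into a continuous $\Zpbr$-algebra map $g\colon R_\GL\otimes_{\Zpbr}\fo_{\breve E}\to\mathcal A$, where $\mathcal A:=R_\cG\,\widehat{\otimes}_{\Zpbr}\,\cO\!\big(\Qisg^\circ_G(\BX)\big)$; the claim is that $g$ kills $J:=\ker\!\big(R_\GL\otimes_{\Zpbr}\fo_{\breve E}\twoheadrightarrow R_\cG\big)$. Moduli-theoretically $g$ classifies $\big(X_\cG\otimes_{R_\cG}\mathcal A,\ \gamma^{\univ}\circ\iota\big)$, where $(X_\cG,\iota)$ is the universal deformation-with-quasi-isogeny over $R_\cG$ and $\gamma^{\univ}\in\Qisg^\circ_G(\BX)(\mathcal A)$ is the tautological self quasi-isogeny; note the $p$-divisible group $X_\cG$ is untouched and only the quasi-isogeny is twisted by $\gamma^{\univ}$. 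Since $R_\cG$ is $\fo_{\breve E}$-flat and $\cO(\Qisg^\circ_G(\BX))\cong\Zpbr[[x_1^{p^{-\infty}},\cdots,x_d^{p^{-\infty}}]]$, the ring $\mathcal A$ is $\Zpbr$-flat, hence $p$-torsion free and $p$-adically separated, and one checks it is reduced. It therefore suffices to show $g(J)$ vanishes after inverting $p$, and for that it is enough to verify, for every $\fo_C$-point of $\mathcal A$ with $C$ an algebraically closed perfectoid field over $\breve E$, that the induced point of $\Spf(R_\GL\otimes_{\Zpbr}\fo_{\breve E})$ lies in $\Spf R_\cG$.

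So fix $\gamma\in\Qisg^\circ_G(\BX)(\fo_C)$ and an $\fo_C$-point $\xi$ of $R_\cG$, yielding a deformation-with-quasi-isogeny $(X_\xi,\iota_\xi)$; we must see $(X_\xi,\gamma\circ\iota_\xi)$ again factors through $R_\cG$. By Corollary~\ref{cor:TenHom} the quasi-isogeny $\gamma$ preserves $(s_\alpha)$, so the crystalline comparison isomorphism built from $\gamma\circ\iota_\xi$ carries $(s_\alpha)\subset\bM^\otimes[\ivtd p]$ to the very same tensors in $\DD(X_\xi)^\otimes[\ivtd p]$ as the one built from $\iota_\xi$, and the crystalline--\'etale comparison then produces the same Galois-invariant $(s_{\alpha,\et})\subset T(X_\xi)^\otimes[\ivtd p]$; moreover the Hodge filtration and Hodge cocharacter of $X_\xi$ depend only on $X_\xi$, not on the quasi-isogeny. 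Hence every clause of the Kisin--Pappas criterion for factoring through $R_\cG$ --- integrality of these tensors in the integral crystalline, resp. Tate-module, lattice, triviality of the $\cG$-torsor of integral trivialisations (automatic over $\fo_C$; in the ambient situation guaranteed by the standing hypothesis that $\fo$-points of $R_\cG$ satisfy Proposition~\ref{prop:KPEtaleCycles}(\ref{prop:KPEtaleCycles:Torsor})), and membership of the Hodge cocharacter in $\set\mu$ --- holds for $(X_\xi,\gamma\circ\iota_\xi)$ precisely because it holds for $(X_\xi,\iota_\xi)$. Thus the twisted point lies in $\Spf R_\cG$. (Well-definedness of everything used --- independence of the section $\Qisg_G(\BX_b)\to\underline{J_b(\Qp)}$ and of $(s_\alpha)$ --- is Proposition~\ref{prop:Qisg}, and the ambient $\Qisg^\circ(\BX)$-action on $\Spf R_\GL$ that we restrict is Caraiani--Scholze's.)

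The real obstacle is the last input: a Kisin--Pappas integrality criterion over bases more general than discrete valuation rings. Propositions~\ref{prop:KPDeforThy} and~\ref{prop:KPEtaleCycles} are stated for $\fo$ finite over $\fo_{\breve E}$, but such points \emph{cannot} detect the assertion, because $\Qisg^\circ_G(\BX)\cong\Spf\Zpbr[[x_i^{p^{-\infty}}]]$ has only the trivial $\fo$-point for a $p$-adic DVR $\fo$ (no nonzero element admits compatible $p$-power roots), so the action is trivial on $\fo$-valued deformations. One genuinely needs the criterion over complete local rings with f-semiperfect reduction --- equivalently over the perfectoid points of the rigid generic fibre --- where the integral crystalline and \'etale lattices and the comparison isomorphisms over $\Bcris^+$ and over $\Bcris\otimes T$ must be controlled in families; this is a relative, perfectoid refinement of the $p$-adic Hodge theory already set up for f-semiperfect rings around Definition~\ref{def:TenHom}, and carrying it out is where most of the work lies. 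Granting it, the argument above is formal.
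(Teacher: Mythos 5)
Your skeleton --- reduce to point-checking, use Corollary~\ref{cor:TenHom} to see that $\gamma$ preserves $(s_\alpha)$, and feed the unchanged $p$-divisible group, tensors and Hodge cocharacter into the Kisin--Pappas criterion --- is the same as the paper's. But the proposal stops short of a proof at the decisive step: you conclude that one ``genuinely needs'' a version of Propositions~\ref{prop:KPDeforThy} and~\ref{prop:KPEtaleCycles} over complete local rings with f-semiperfect reduction, and you defer establishing it as ``where most of the work lies.'' That refinement is never carried out, so the argument is incomplete; and, more to the point, the paper shows it is not needed.

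Your diagnosis --- that finite-extension points ``cannot detect the assertion'' because $\Qisg^\circ_G(\BX)\cong\Spf\Zpbr[[x_i^{p^{-\infty}}]]$ has only the trivial $\fo$-point --- is where the reasoning goes wrong. The paper's Proposition~\ref{prop:QisgActionRZ} fixes $\xi:R_\cG\ra\fo$, base changes the orbit map to $\fo_C$ (with $C$ a complete algebraically closed extension of $\Frac(\fo)$), and then passes to the \emph{scheme-theoretic image} $S_{G,\xi}$ of $R_\GL$ inside $S^\infty_G\wh\otimes_{\Zpbr}\fo_C$. This image is a $p$-torsion-free quotient of the \emph{noetherian} ring $R_\GL\otimes_{\Zpbr}\fo_{\breve E}$, so its $\fo'$-points ($\fo'$ finite over $\fo$) are Zariski dense --- even though, exactly as you observe, such a point never lifts to a nontrivial $\fo'$-point of the group itself, only to an $\fo_C$-point of the orbit. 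For each such $\xi':R_\GL\ra\fo'$ the deformation $X_{\xi'}$ satisfies $X_{\xi',\fo_C}=X_{\xi,\fo_C}$ (the action twists only the framing, not the $p$-divisible group); the crystalline tensors agree after base change to $\fo_C/p$ because $\gamma$ is tensor-preserving; and the \'etale tensors on $T(X_{\xi'})=T(X_\xi)$ agree by the comparison over $\fo_C$ (this is where \cite[Theorem~6.1]{FarguesFontaine:VB} enters). Hence the \emph{DVR-level} criterion of Proposition~\ref{prop:KPEtaleCycles} applies verbatim to $\xi'$, with its hypotheses verified after base change to $\fo_C$, and Zariski density does the rest. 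So the missing ingredient in your proposal is not a perfectoid integrality criterion but this reduction to the dense set of DVR-points of the noetherian image of the orbit.
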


We can deduce this theorem from the following proposition:

\begin{propsub}\label{prop:QisgActionRZ}
Let $\fo$ be a finite extension of $\fo_{\breve E}$, and assume that $\xi: \Spf\fo\ra\Spf R_\cG$ satisfies Proposition~\ref{prop:KPEtaleCycles}(\ref{prop:KPEtaleCycles:Torsor}). Then the following map
\[\xymatrix@1{
\Qisg_G^\circ(\BX)\times_{\Spf\Zpbr}\Spf\fo \ar[r]^-{(\id,\xi)} &
\Qisg_G^\circ(\BX) \times_{\Spf\Zpbr} \Spf R_\cG \ar[r]^-{\text{(\ref{eqn:QisgActionRZ})}}&
\Spf (R_\GL\otimes_{\Zpbr}\fo_{\breve E})
}\]
factors through $R_\cG$.
\end{propsub}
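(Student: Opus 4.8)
By Proposition~\ref{prop:Qisg} the source $\Qisg^\circ_G(\BX)\times_{\Spf\Zpbr}\Spf\fo$ equals $\Spf B$, where $B:=\fo[[x_1^{1/p^\infty},\dotsc,x_d^{1/p^\infty}]]$ and $d=\langle 2\rho,\nu_{[b]}\rangle$; write $\gamma\in\Qisg^\circ_G(\BX)(B)$ for the tautological point. Under the moduli description of $\Spf R_\GL$ recalled at the start of \S\ref{subsec:QisgRZ}, the composite morphism in the statement classifies the deformation $(X',\iota')$ of $\BX$ over $B$ with $X':=X_\xi\,\widehat\otimes_\fo B$ and $\iota':=\gamma\circ(\iota_\xi)_{B/p}$, where $(X_\xi,\iota_\xi)$ is the pair over $\fo$ classified by $\xi$. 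The feature I would exploit is that $X'$ is merely the base change of the $p$-divisible group $X_\xi$, which comes from $R_\cG$ because $\xi$ factors through $R_\cG$; only the rigidification has been twisted, by $\gamma$. Recall, from the construction in Definition~\ref{def:LocShDatum} and the paragraphs after it, that the closed immersion $\Spf R_\cG\hookrightarrow\Spf(R_\GL\otimes_{\Zpbr}\fo_{\breve E})$ is the pull-back of the closed immersion of Pappas--Zhu local models $\mathrm M^\loc_{\cG,\set\mu}\hookrightarrow(\mathrm M^\loc_{\GL(\Lambda),\set{\nu_d}})_{\fo_{\breve E}}$ along the canonical morphism $\Spf(R_\GL\otimes_{\Zpbr}\fo_{\breve E})\to(\mathrm M^\loc_{\GL(\Lambda),\set{\nu_d}})_{\fo_{\breve E}}$ realising the former as the completed local ring of the latter, a morphism which under the moduli interpretation sends a pair $(X,\iota)$ to the Hodge filtration of $X$, viewed as a rank-$d$ quotient of $\Lambda\otimes_{\Zp}(-)$ via the rigidification $\iota$. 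Hence the composite morphism factors through $\Spf R_\cG$ as soon as the period point of $(X',\iota')$ lands in $\mathrm M^\loc_{\cG,\set\mu}$.

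The plan is now to compute this period point. Base change of Dieudonn\'e displays along $\Zink(\fo)\to\Zink(B)$ shows that the $\Zink(B)$-module underlying $X'$, together with its Hodge filtration, is the scalar extension of that of $X_\xi$; only the transport isomorphism into $\Lambda\otimes_{\Zp}(-)$ changes, because $\iota'=\gamma\circ(\iota_\xi)_{B/p}$. Concretely, replacing $\iota_\xi$ by $\gamma\circ\iota_\xi$ composes the canonical rigidification \eqref{eqn:IsocRig} with the automorphism $\gamma_\ast$ of $\Lambda\otimes_{\Zp}B$ induced by the Dieudonn\'e realisation of $\gamma$, and therefore translates the resulting grassmannian point by $\gamma_\ast$. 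Since the period point of $(X_\xi\,\widehat\otimes_\fo B,(\iota_\xi)_{B/p})$ is just the base change to $B$ of the period point of $\xi$ --- which lies in $\mathrm M^\loc_{\cG,\set\mu}$ because $\xi$ factors through $R_\cG$ --- it remains only to check that $\gamma_\ast$ preserves $\mathrm M^\loc_{\cG,\set\mu}$. By Definition~\ref{def:Qisg} the element $\gamma\in\Qisg^\circ_G(\BX)(B)$ has Dieudonn\'e realisation lying in the tensor-preserving subobject $\wt\cH^G_b\subset\wt\cH_b$, so $\gamma_\ast$ preserves the tensors $(s_\alpha)$ and hence lies in $\cG(B)$; and because $\Qisg^\circ_G(\BX)(\Fpbar)=\{1\}$ (Remark~\ref{rmk:Qisg}) and $\gamma$ reduces to the identity quasi-isogeny over $\Fpbar$, the reduction of $\gamma_\ast$ over $\Fpbar$ is the identity, so $\gamma_\ast$ in fact lies in the parahoric $\cG^\circ(B)$. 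As $\mathrm M^\loc_{\cG,\set\mu}$ carries a $\cG^\circ$-action, translation by $\gamma_\ast$ maps $\mathrm M^\loc_{\cG,\set\mu}(B)$ into itself and fixes the reduction of the period point to the chosen $\Fpbar$-point; thus the period point of $(X',\iota')$ lands in $\mathrm M^\loc_{\cG,\set\mu}$, as required.

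The step I expect to be the main obstacle is making the compatibility of the period morphism with the $\Qisg^\circ_G(\BX)$-action rigorous over the non-noetherian base $B$: one has to unwind the construction of the universal Dieudonn\'e display $(M_\GL,M_{\GL,1},\Phi,\Phi_1)$ over $R_\GL$ and verify carefully that altering the rigidification by $\gamma$ translates the corresponding point of the grassmannian by $\gamma_\ast$. Two subsidiary points need care. First, the standing hypothesis Proposition~\ref{prop:KPEtaleCycles}(\ref{prop:KPEtaleCycles:Torsor}) is what makes the criteria of Propositions~\ref{prop:KPDeforThy} and~\ref{prop:KPEtaleCycles} apply cleanly to the deformations appearing here, so that ``$\xi$ factors through $R_\cG$'' is genuinely equivalent to the period point of $\xi$ lying in $\mathrm M^\loc_{\cG,\set\mu}$ --- the input used above. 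Second, although $B/p$ is not f-semiperfect, so that Corollary~\ref{cor:TenHom} does not literally apply to $B$, the tensor-preservation of $\gamma$ invoked above is built into the definition of $\Qisg_G(\BX)\subset\Qisg(\BX)$ through the inclusion $\wt\cH^G_b\subset\wt\cH_b$; alternatively one checks it after base change to an f-semiperfect (e.g.\ perfectoid) cover of $B/p$ and descends.
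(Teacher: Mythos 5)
Your proposal takes a genuinely different route from the paper (a direct local-model/period-map computation over the non-noetherian base $B$, versus the paper's reduction to $\fo'$-valued points followed by verification of the criteria of Proposition~\ref{prop:KPEtaleCycles} via \'etale tensors and the Scholze--Weinstein/Fargues--Fontaine comparison), but as written it has a gap that breaks the final step.

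The concrete failure is the claim that $\gamma_*$ ``lies in $\cG(B)$'' and ``in fact lies in the parahoric $\cG^\circ(B)$''. An element $\gamma\in\Qisg^\circ_G(\BX)(B)$ is a quasi-isogeny, not an isomorphism: by Proposition~\ref{prop:Qisg} it has the form $\id+\gamma_0$ with $\gamma_0$ a point of $\wt\cH^{G,\circ}_b$, and such a quasi-isogeny does \emph{not} preserve the integral Dieudonn\'e lattice. Its realisation is at best an element of $G(B[\ivtd p])$ (or of $G$ of a crystalline period ring with $p$ inverted), not of $\cG^\circ(B)$; the fact that it reduces to the identity over $\Fpbar$ does not give integrality over $B$. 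Consequently you cannot invoke the integral $\cG^\circ$-action on $\mathrm M^\loc_{\cG,\set\mu}$ to conclude that translation by $\gamma_*$ preserves $\mathrm M^\loc_{\cG,\set\mu}(B)$. (A rational version --- $\gamma_*\in G(B[\ivtd p])$ preserves the generic fibre $G/P_\mu$, combined with $p$-torsion-freeness and the fact that $\mathrm M^\loc_{\cG,\set\mu}$ is the flat closure of its generic fibre --- could plausibly be substituted, but that is a different argument and still requires algebraising the formal $B$-point and knowing that $\Spec B$ is the closure of $\Spec B[\ivtd p]$.)

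There is a second, more structural issue. You assert that ``$\xi$ factors through $R_\cG$'' is equivalent to ``the period point of $\xi$, computed via the rigidification $\iota$, lies in $\mathrm M^\loc_{\cG,\set\mu}$''. But the grassmannian point defining the map to $R_\GL$ is the Hodge filtration read through the \emph{canonical module identification} $M_\xi=\Zink(\fo)\otimes_{\Zpbr}\bM$ coming from the universal display, whereas the rigidification gives the distinct, only $\Psi$-equivariant and only rational, identification (\ref{eqn:IsocRig}); reconciling the two is exactly the content of the integrality and torsor conditions in Proposition~\ref{prop:KPDeforThy}(\ref{prop:KPDeforThy:pInt}),(\ref{prop:KPDeforThy:Torsor}) and of Proposition~\ref{prop:KPEtaleCycles}(\ref{prop:KPEtaleCycles:Torsor}). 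Your argument never verifies these conditions for the translated point, whereas the paper's proof spends most of its effort precisely there: after reducing (by $p$-torsion-freeness of the image ring and Zariski density) to points $\xi'$ valued in finite extensions $\fo'$, it uses $X_{\xi',\fo_C}=X_{\xi,\fo_C}$ to identify the Tate modules, and shows the \'etale tensors match because they are determined by the crystalline tensors together with the Hodge filtration via the comparison isomorphism. To repair your proof you would need either to carry out this torsor verification, or to justify that the Hodge-filtration condition alone characterises $R_\cG$-points --- which is not what Propositions~\ref{prop:KPDeforThy} and~\ref{prop:KPEtaleCycles} provide.
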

Granting Proposition~\ref{prop:QisgActionRZ}, one can deduce Theorem~\ref{thm:QisgActionRZ} as follows:
\begin{proof}[Proof of Theorem~\ref{thm:QisgActionRZ}]
We write the underlying formal scheme for $\Qisg^\circ_G(\BX)$ as $\Spf S^\infty_G$ with $S^\infty_G = \Zpbr[[ x_1^{1/p^\infty},\cdots,x_d^{1/p^\infty}]]$; \emph{cf.} Proposition~\ref{prop:Qisg}. Then the map (\ref{eqn:QisgActionRZ}) can be written as
\begin{equation}\label{eqn:QisgActionRing}
R_\GL\otimes_{\Zpbr}\fo_{\breve E} \ra R_\cG\wh\otimes_{\Zpbr}S^\infty_G = R_\cG[[ x_1^{1/p^\infty},\cdots,x_d^{1/p^\infty}]].
\end{equation}
We want to show that this map factors through the quotient $R_\cG$.

By Proposition~\ref{prop:QisgActionRZ}, the following map 
\begin{equation}\label{eqn:QisgActionPt}
\xymatrix@1{
R_\GL\otimes_{\Zpbr}\fo_{\breve E} \ar[r]^-{\text{(\ref{eqn:QisgActionRing})}}&
R_\cG\wh\otimes_{\Zpbr}S^\infty_G \ar[r]^-{\xi} &
\fo\wh\otimes_{\Zpbr}S^\infty_G = \fo[[ x_1^{1/p^\infty},\cdots,x_d^{1/p^\infty}]]
}
\end{equation}
factors through $R_\cG$ for any $\xi:R_\cG\ra \fo$ (where $\fo$ is a finite extension of  $\fo_{\breve E}$).
Since $\Spec R_\cG$ is the Zariski closure of $\Spec R_\cG[\ivtd p]$ in $\Spec (R_\GL\otimes_{\Zpbr}\fo_{\breve E})$, the kernel of (\ref{eqn:QisgActionRing}) is the intersection of the kernel of  (\ref{eqn:QisgActionPt}), which should coincide with the kernel of the natural projection $R_\GL\otimes_{\Zpbr}\fo_{\breve E}\thra R_\cG$ by Proposition~\ref{prop:QisgActionRZ}.
\end{proof}

\begin{proof}[Proof of Proposition~\ref{prop:QisgActionRZ}]
Let $X_\xi$ denote the deformation of $\BX$ over $\fo$ corresponding to $\xi$, and let $\iota_\xi:(X_\xi)_{\fo/p}\dra \BX_{\fo/p}$ denote the unique quasi-isogeny lifting the identity map of $\BX$.
We choose a complete algebraically closed extension $C$ of $K = \Frac(\fo)$, and view $\xi$ as an $\fo_C$-point of $R_\cG$. By (completed) scalar extension of (\ref{eqn:QisgActionPt}) over $\fo_C$, we obtain the following map
\begin{equation}\label{eqn:QisgActionC}
R_\GL \otimes_{\Zpbr}\fo_{\breve E} \ra S_G^\infty\wh\otimes_{\Zpbr}\fo_C = \fo_C[[ x_1^{1/p^\infty},\cdots,x_d^{1/p^\infty}]],
\end{equation}
which is determined by the following properties:
\begin{enumerate}
\item The universal deformation of $\BX$ pulls back to the base change of $X_\xi$ via $\fo\ra S_G^\infty\wh\otimes_{\Zpbr}\fo_C$.
\item For any open ideal $I\subset S_G^\infty\wh\otimes_{\Zpbr}\fo_C$ containing $p$ such  that the quotient $(S_G^\infty\wh\otimes_{\Zpbr}\fo_C)/I$ is f-semiperfect, the map $R_\GL\ra (S_G^\infty\wh\otimes_{\Zpbr}\fo_C)/I$, induced by (\ref{eqn:QisgActionC}) corresponds to the following quasi-isogeny
\[\xymatrix@1{
(X_\xi)_{(S_G^\infty\wh\otimes_{\Zpbr}\fo_C)/I} \ar@{-->}[r]^-{\iota_\xi}&
\BX_{(S_G^\infty\wh\otimes_{\Zpbr}\fo_C)/I} \ar@{-->}[r]^-{\gamma^\univ}&
\BX_{(S_G^\infty\wh\otimes_{\Zpbr}\fo_C)/I}
},\]
where $\gamma^\univ \in \Qisg^\circ_G(\BX)((S_G^\infty\wh\otimes_{\Zpbr}\fo_C)/I)$ is the tautological point.
\end{enumerate}
The quasi-isogeny $\gamma^\univ\circ\iota_\xi$ preserves the tensors $(s_\alpha)$ in the sense of Definition~\ref{def:TenHom}.

Let $S_{G,\xi}\subset S_G^\infty\wh\otimes_{\Zpbr}\fo_C$ denote the image of $R_\GL$ by (\ref{eqn:QisgActionC}). Since  $S_{G,\xi}$ is $p$-torsion free by construction, it suffices to show that for any finite extension $\fo'$ of $\fo$, any map $R_\GL\thra S_{G,\xi} \ra \fo'$ factors through $R_\cG$.

For $\xi':S_{G,\xi}\ra \fo'$, let us also choose $\fo'\hra \fo_C$, where $C$ is the fixed complete algebraically closed extension of $\Frac(\fo)$. We also choose and a map $\xi'_{\fo_C}:\Spf\fo_C\ra\Qisg^\circ_G(\BX)$ over $\xi'$.
Let $X_{\xi'}$ denote the deformation of $\BX$ over $\fo'$ corresponding to $\xi'$. By construction, we have $X_{\xi',\fo_C}=X_{\xi,\fo_C}$, where $\xi:R_\cG\ra \fo$ is the fixed $\fo$-point in the statement of Proposition~\ref{prop:QisgActionRZ}. Furthermore, the following maps of $F$-isocrystals
\[
s_\alpha:\triv\ra\DD(X_{\xi',\fo'/p})^\otimes[\ivtd p]\text{ and } s_\alpha:\triv\ra\DD(X_{\xi,\fo/p})^\otimes[\ivtd p]
\]
coincide after the base change over $\fo_C/p$.

Now we want to show that $\xi':R_\GL\ra\fo'$ factors through $R_\cG$ by applying Proposition~\ref{prop:KPEtaleCycles}. The condition on the Hodge filtration (Proposition~\ref{prop:KPEtaleCycles}(\ref{prop:KPEtaleCycles:HodgeFil})) is straightforward, as it can be checked after the base change over $C$, and $\xi$ satisfies Proposition~\ref{prop:KPEtaleCycles}(\ref{prop:KPEtaleCycles:HodgeFil}).

It remains to verify the condition on \'etale tensors (Proposition~\ref{prop:KPEtaleCycles}(\ref{prop:KPEtaleCycles:Torsor})). Note that we have a natural $\Zp$-linear isomorphism
\[T(X_\xi) = T(X_{\xi'})\]
induced by the identification $X_{\xi,\fo_C} = X_{\xi',\fo_C}$. Furthermore, this identification preserves the tensors $(s_{\alpha,\et})$. Indeed, the \'etale tensors $(s_{\alpha,\et})\subset T(X_{\xi'})^\otimes[\ivtd p]$ are determined by the maps of $F$-isocrystals $s_\alpha:\triv\ra\DD(X_{\xi',\fo_C/p})^\otimes[\ivtd p]$ and the Hodge filtration for $X_{\xi',\fo_C}$; indeed, this datum determines the tensors of a certain vector bundle respecting the modification (associated to $X_{\xi',\fo_C}$), which determines the \'etale tensors by extending the comparison map \cite[Corollary~5.1.2]{ScholzeWeinstein:RZ} to tensors of vector bundles with modifications; \emph{cf.} \cite[Theorem~6.1]{FarguesFontaine:VB} and the subsequent Remark.
Since $(T(X_\xi),(s_{\alpha,\et}))$ satisfies Proposition~\ref{prop:KPEtaleCycles}(\ref{prop:KPEtaleCycles:Torsor}), the same holds for $(T(X_{\xi'}),(s_{\alpha,\et}))$.
\end{proof}

\begin{rmksub}\label{rmk:QisgActionRZ}
Assume that $\cG$ is a reductive group over $\Zp$, and $(\BX,(s_\alpha))$ comes from a mod~$p$ point of some integral canonical model of Hodge-type Shimura varieties (with hyperspecial level structure at $p$). Then one can define Hodge-type Rapoport-Zink spaces (\emph{cf.} \cite{Kim:RZ}, \cite{HowardPappas:GSpin}) and Hodge-type Igusa towers (\emph{cf.} \cite{Hamacher:ShVarProdStr}), and it is not difficult to deduce from Theorem~\ref{thm:QisgActionRZ} that the full tensor-preserving self quasi-isogeny group $\Qisg_G(\BX)$ acts on Hodge-type Rapoport-Zink spaces and Igusa towers. Indeed, we have $\Qisg_G(\BX) = \Qisg^\circ_G(\BX)\rtimes \underline{J_b(\Qp)}$, and the action of $J_b(\Qp)$ on these spaces are already defined.
\end{rmksub}

\begin{defnsub}\label{def:QisgOrbits}
Let $\fo$ be a finite extension of $\fo_{\breve E}$, and assume that $\xi: \Spf\fo\ra\Spf R_\cG$ satisfies Proposition~\ref{prop:KPEtaleCycles}(\ref{prop:KPEtaleCycles:Torsor}). Then let $\Qisg^\circ_G(\BX)_\xi$ denote the $\Qisg^\circ_G(\BX)$-orbit of $\xi$; more precisely, we define $\Qisg^\circ_G(\BX)_\xi:=\Qisg^\circ_G(\BX)\times_{\Spf\Zpbr}\Spf\fo$, viewed as a formal scheme over $R_\cG$ via the map defined in Proposition~\ref{prop:QisgActionRZ}. We similarly define a formal scheme $\Qisg^\circ(\BX)_\xi:=\Qisg^\circ(\BX)\times_{\Spf\Zpbr}\Spf\fo$ over $R_\GL$.
\end{defnsub}

We need the following lemma in the next section:
\begin{lemsub}\label{lem:QisgG}
In the setting of Definition~\ref{def:QisgOrbits}, let $C$ be any algebraically closed complete extension of $\Frac(\fo)$. Then, $\xi'\in\Qisg^\circ(\BX)_\xi(\fo_C)$ lies in $\Qisg^\circ_G(\BX)_\xi(\fo_C)$ if and only if $\xi'$ defines an $\fo_C$-point of $R_\cG$. 
\end{lemsub}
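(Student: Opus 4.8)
The plan is to prove the two implications separately. The forward implication ``$\xi'\in\Qisg^\circ_G(\BX)_\xi(\fo_C)\Rightarrow\xi'$ defines an $\fo_C$-point of $R_\cG$'' is essentially formal: by Definition~\ref{def:QisgOrbits} the formal scheme $\Qisg^\circ_G(\BX)_\xi$ is viewed over $\Spf R_\cG$ via the map of Proposition~\ref{prop:QisgActionRZ}, i.e. via \eqref{eqn:QisgActionRZ}, and \eqref{eqn:QisgActionRZ} is by construction the restriction, along $\Qisg^\circ_G(\BX)\hra\Qisg^\circ(\BX)$, of the $\Qisg^\circ(\BX)$-action used to define $\Qisg^\circ(\BX)_\xi\to\Spf R_\GL$; hence the image of any $\fo_C$-point of $\Qisg^\circ_G(\BX)_\xi$ in $\Spf(R_\GL\otimes_{\Zpbr}\fo_{\breve E})$ factors through $\Spf R_\cG$.

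For the reverse implication, identify $\Qisg^\circ(\BX)_\xi(\fo_C)$ with $\Qisg^\circ(\BX)(\fo_C)$ (as $\fo_C$ is an $\fo$-algebra) and write $\gamma\in\Qisg^\circ(\BX)(\fo_C)$ for the element corresponding to $\xi'$; thus $\xi'$ is obtained by acting by $\gamma$ on $\xi$, so it corresponds to the RZ datum underlying $\xi$ (with $X_{\xi'}=X_{\xi,\fo_C}$, in particular $T(X_{\xi'})=T(X_\xi)$) with its tautological quasi-isogeny to $\BX$ twisted by $\gamma$ --- this stays inside the formal neighbourhood $\Spf R_\GL$ because $\gamma$ reduces to $\id$ on the special fibre, being in $\Qisg^\circ$. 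Since $\fo_C/p$ is f-semiperfect, Corollary~\ref{cor:TenHom} reduces us to showing that $\gamma_{\fo_C/p}$ preserves the tensors $(s_\alpha)$ in the sense of Definition~\ref{def:TenHom}.

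To obtain this I would compare the two RZ points exactly as in the proof of Proposition~\ref{prop:QisgActionRZ}, but run the argument backwards. The Hodge-filtration condition Proposition~\ref{prop:KPEtaleCycles}(\ref{prop:KPEtaleCycles:HodgeFil}) for $\xi'$ is inherited from $\xi$ after base change to $C$, exactly as in that proof. On the other hand, twisting the quasi-isogeny by $\gamma$ precomposes the crystalline comparison, hence also the $p$-adic comparison $\Bcris\otimes T(X_{\xi'})\cong\Bcris\otimes\bM$ (via the vector-bundle-with-modification description of \cite[Corollary~5.1.2]{ScholzeWeinstein:RZ}, \cite[Theorem~6.1]{FarguesFontaine:VB}), with the $F$-equivariant realization of $\gamma$; consequently the \'etale tensors attached to $\xi'$ are the image of those attached to $\xi$ under this realization. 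Now $\xi$ satisfies Proposition~\ref{prop:KPEtaleCycles}(\ref{prop:KPEtaleCycles:Torsor}) by the standing hypothesis of Definition~\ref{def:QisgOrbits}; invoking the improvement of Proposition~\ref{prop:KPEtaleCycles} to a necessary-and-sufficient criterion alluded to in Remark~\ref{rmk:NonNeutral} --- which over the strictly henselian base $\fo_C$ is clean, since every $\cG$-torsor over $\Spec\fo_C$ is trivial --- the hypothesis that $\xi'$ factors through $R_\cG$ forces the $\gamma$-twisted \'etale tensors again to be integral in $T(X_{\xi'})^\otimes$ and to identify $(T(X_{\xi'}),(s_{\alpha,\et}))$ with $(\Lambda,(s_\alpha))$. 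Comparing these two $\cG$-structures on $T(X_{\xi'})=T(X_\xi)$ and using that $\cG$ is the pointwise stabiliser of $(s_\alpha)$ (Proposition~\ref{prop:Chevalley}), one concludes that the realization of $\gamma$ lies in $\cG$ over the relevant period ring, i.e. $\gamma_{\fo_C/p}$ preserves $(s_\alpha)$, as wanted.

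The main obstacle is precisely the last step's reliance on a statement over $\fo_C$, which is not a finite extension of $\fo_{\breve E}$, so that Propositions~\ref{prop:KPDeforThy} and~\ref{prop:KPEtaleCycles} and the Dieudonn\'e display formalism of \S\ref{subsec:DeforKP} do not literally apply. I would handle this exactly as in the proof of Theorem~\ref{thm:QisgActionRZ} and the closing paragraph of the proof of Proposition~\ref{prop:QisgActionRZ}: use that $\Spec R_\cG$ is the Zariski closure of $\Spec R_\cG[\ivtd p]$ inside $\Spec(R_\GL\otimes_{\Zpbr}\fo_{\breve E})$ to reduce the statement ``$\xi'$ factors through $R_\cG$'' to the analogous statement for maps through finite extensions $\fo'$ of $\fo$, to which the cited results apply; alternatively, re-run the Fargues--Fontaine computation of that proof directly over $C$. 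I expect this bookkeeping, rather than any new idea, to be the bulk of the write-up.
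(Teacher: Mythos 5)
Your forward implication and your reduction to points defined over finite extensions $\fo'$ of $\fo$ (via the Zariski-density device also used for Theorem~\ref{thm:QisgActionRZ}) both match the paper and are fine. The gap is in the last step of your reverse implication. Writing $g$ for the \'etale realization of $\gamma$, you know $s'_{\alpha,\et}=g(s_{\alpha,\et})$ and that \emph{each} of $(T(X_\xi),(s_{\alpha,\et}))$ and $(T(X_{\xi'}),(s'_{\alpha,\et}))$ is isomorphic to $(\Lambda,(s_\alpha))$; but this only says $g$ lies in $h\cdot G$ for \emph{some} $h\in\GL(\Lambda)$ carrying the first family of tensors to the second, not that $g\in G$. (Concretely: take $s_{\alpha,\et}=s_\alpha$ and $g=h$ any element of $\GL(\Lambda)(\Zp)$ not in $\cG(\Zp)$; then $(\Lambda,(h(s_\alpha)))$ is again a trivial $\cG$-torsor, via $h$, yet $h\notin G$.) What you need is the strictly stronger statement that the two families of tensors \emph{coincide}, $s'_{\alpha,\et}=s_{\alpha,\et}$ --- which is essentially equivalent to what you are trying to prove --- and neither Proposition~\ref{prop:KPEtaleCycles} nor the necessary-and-sufficient refinement alluded to in Remark~\ref{rmk:NonNeutral} (which the paper explicitly does not prove) delivers it, since both only constrain the isomorphism class of the pair $(T(X_{\xi'}),(s'_{\alpha,\et}))$.

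The paper closes exactly this gap by staying on the crystalline side and exploiting the uniqueness built into the Kisin--Pappas theory: for an $\fo'$-point $\xi'$ of $R_\cG$, the tensors on $M_{\xi'}$ transported from $(s_\alpha)\subset\bM^\otimes$ by the \emph{canonical} quasi-isogeny $\iota_{\xi'}$ via (\ref{eqn:IsocRig}) agree with the pullback of the universal tensors on $M_\cG$ (Proposition~\ref{prop:KPDeforThy} together with \cite[Lemma~3.2.13]{KisinPappas:ParahoricIntModel}); in other words $\iota_{\xi'}$ is itself tensor-preserving, and likewise $\iota_\xi$. Hence $\iota_{\xi'}^{-1}\circ\iota_\xi$ is a tensor-preserving quasi-isogeny $X_{\xi,\fo'/p}\dra X_{\xi',\fo'/p}$, and after base change to $\fo_C/p$ and the identification $X_{\xi',\fo_C}=X_{\xi,\fo_C}$ it becomes a tensor-preserving self quasi-isogeny producing precisely the element of $\Qisg^\circ_G(\BX)(\fo_C)$ that moves $\xi$ to $\xi'$. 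I would replace your \'etale-tensor comparison by this crystalline argument; the rest of your outline can stand.
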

\begin{proof}
By Proposition~\ref{prop:QisgActionRZ}, any $\xi'\in\Qisg^\circ_G(\BX)_\xi(\fo_C)$ defines an $\fo_C$-point of $R_\cG$. Conversely, let us assume that $\xi'\in\Qisg^\circ(\BX)_\xi(\fo_C)$ defines an $\fo_C$-point of $R_\cG$, and show that $\xi'\in \Qisg^\circ_G(\BX)_\xi(\fo_C)$. 

Let $\Spf S\subset \Spf R_\GL$ and $\Spf S_\cG\subset \Spf R_\cG$ respectively denote the smallest closed formal subschemes  which the structure morphisms $\Qisg^\circ(\BX)_\xi \ra\Spf R_\GL$ and $\Qisg^\circ_G(\BX)_\xi \ra\Spf R_\cG$ factor through. We want to show that the natural injective map $(\Spf S_\cG)(\fo_C)\ra (\Spf S\otimes_{\Zpbr}R_\cG)(\fo_C)$ is bijective. By the Zariski density consideration, it suffices to show that if $\xi'\in (\Spf S\otimes_{\Zpbr}R_\cG)(\fo_C)$ is defined over some finite extension $\fo'$ of $\fo$, then $\xi'$ lies in the image of $(\Spf S_\cG)(\fo_C)$. For such $\xi'$, we use the same letter $\xi'$ to denote the $\fo'$-point $\xi':R_\cG\ra\fo'$ descending the $\fo_C$-point.


Let $X_{\xi'}$ denote the $p$-divisible group over $\fo'$ corresponding to $\xi'$. Then we have a unique quasi-isogeny
\[
\iota_{\xi'}:X_{\xi',\fo'/p} \dra \BX_{\fo'/p},
\]
lifting the identity map on $\BX$. And as $\xi'$ is an $\fo'$-point of $R_\cG$, the quasi-isogeny $\iota_{\xi'}$ is tensor-preserving. Therefore, the quasi-isogeny
\[
\iota_{\xi'}\iv\circ\iota_\xi:X_{\xi,\fo'/p} \dra X_{\xi',\fo'/p}
\]
is tensor-preserving. Finally, using $X_{\xi',\fo_C} = X_{\xi,\fo_C}$ (which comes from the fact that $\xi'\in\Qisg^\circ(\BX)(\fo_C)$), it follows that the pull back of $\iota_{\xi'}\iv\circ\iota_\xi$ over $\fo_C/p$ is a tensor-preserving self quasi-isogeny of $X_{\xi, \fo_C/p}$, hence defines an element $\gamma_{\xi}\in\Qisg^\circ_G(\BX)(\fo_C)$. Furthermore, by construction we have $\gamma_{\xi'}\cdot \xi = \xi'$, where $\gamma_{\xi'}\cdot \xi$ refers to the natural action of $\gamma_{\xi'}\in\Qisg^\circ(\BX)(\fo_C)$ on $\xi\in R_\GL(\fo_C)$. This shows that $\xi'\in\Qisg^\circ_G(\BX)_\xi(\fo_C)$, as we have claimed.
\end{proof}
%
%
%

\section{Almost product structure in  Kisin-Pappas deformation rings}\label{sec:APS}
Throughout this section, we set $\bar R_\GL:= R_\GL/p R_\GL$ and $\bar R_\cG:=R_\cG/\m_{\breve E}R_\cG$. For any ring $R$ of characteristic~$p$, we let $R^{p^{-\infty}}$ denote the perfection of $R$. If $R$ is a complete local noetherian ring of characteristic $p$, then we write $\wh R^{p^{-\infty}}$ for the $\m_R$-adic completion of $R^{p^{-\infty}}$.  We use the similar notation for  schemes and  formal schemes of characteristic~$p$.
\subsection{Central leaves}

For any geometric point $\bar x:\Spec\kappa\ra \Spec \bar R_\cG$ (with $\kappa$ algebraically closed),
let $X_{\bar x}$ denote the fibre of the universal deformation of $p$-divisible groups over $R_{\cG}$. Then the Dieudonn\'e display $(M_{\cG,\bar x}, M_{\cG,\bar x,1},\Psi)$ of $X_{\bar x}$ can be explicitly described as follows: we have $M_{\cG,\bar x} = W(\kappa)\otimes_{\bar x, \Zink(R_\cG)}M_\cG$,  $M_{\cG,\bar x,1}$ is defined so that its associated Hodge filtration is the fibre of the Hodge filtration associated to $(M_\cG, M_{\cG,1})$, and $\Psi:\wt M_{\cG,\bar x,1}\riso M_{\cG,\bar x}$ is the fibre of $\Psi:\wt M_{\cG,1}\riso M_\cG$. We also have the fibre $(s_{\alpha,\bar x})\subset M_{\cG,\bar x}^\otimes$ of the tensors  $(s_\alpha)\subset M_\cG^\otimes$.
\begin{propsub}\label{prop:OortFoliation}
There exists a reduced closed subscheme of $\gC_\cG\subset \Spec \bar R_\cG$, also denoted by $\gC_\cG^{\db b}$, such that a geometric point $\bar x:\Spec\kappa\ra \Spec \bar R_\cG$ (with $\kappa$ algebraically closed) factors through $\gC_\cG$ if and only if we have $X_{\bar x}\cong \BX_\kappa$. Furthermore, if $\bar x$ defines a geometric point of $\gC_\cG$, then  there exists an isomorphism of Dieudonn\'e displays $M_{\cG,\bar x}\cong W(\kappa)\otimes_{\Zpbr} \bM$ sending $(s_{\alpha,\bar x})$ to  $(1\otimes s_\alpha)\subset W(\kappa)\otimes_{\Zpbr}\bM^\otimes$.
\end{propsub}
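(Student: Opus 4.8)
The proposition asserts the existence of an Oort-style central leaf in $\Spec\bar R_\cG$ together with a trivialisation of the Dieudonn\'e display with tensors at each of its geometric points. The natural strategy is to descend this from the well-understood $\GL_n$ case (the central leaf in $\Spec\bar R_\GL$, which is Oort's original construction applied to the universal deformation of $\BX$ over the Rapoport-Zink completion) and intersect with $\Spec\bar R_\cG\hookrightarrow\Spec\bar R_\GL$. First I would recall that over $\bar R_\GL$ there is a reduced closed subscheme $\gC_\GL$ (the ``Oort locus'') whose geometric points $\bar x$ are exactly those with $X_{\bar x}\cong\BX_\kappa$ as $p$-divisible groups; this is \cite[\S3]{Oort:Foliations} or can be seen directly from Dieudonn\'e display theory, since $X_{\bar x}\cong\BX_\kappa$ is equivalent to the existence of an isomorphism of displays $M_{\GL,\bar x}\cong W(\kappa)\otimes_{\Zpbr}\bM$, a locally closed condition. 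Then I would \emph{define} $\gC_\cG$ to be the reduced closed subscheme of $\Spec\bar R_\cG$ underlying the scheme-theoretic intersection $\gC_\GL\cap\Spec\bar R_\cG$.

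The content is then twofold: (i) the geometric points of $\gC_\cG$ are characterised by $X_{\bar x}\cong\BX_\kappa$ \emph{as $p$-divisible groups} (no tensors), and (ii) at such a point the isomorphism of displays can moreover be chosen to match $(s_{\alpha,\bar x})$ with $(1\otimes s_\alpha)$. For (i), one inclusion is immediate from the $\GL_n$ statement. For the converse, suppose $\bar x\in\Spec\bar R_\cG(\kappa)$ satisfies $X_{\bar x}\cong\BX_\kappa$; then $\bar x$ lies in $\gC_\GL$ by the $\GL_n$ characterisation, and since it is also a point of $\Spec\bar R_\cG$ it lies in the intersection, hence in $\gC_\cG$ (here I use that $\gC_\cG$ is taken with the reduced structure, so a geometric point of $\gC_\GL\cap\Spec\bar R_\cG$ is a geometric point of $\gC_\cG$). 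For (ii), the key point is that over a point of $\gC_\cG$ we have, on one hand, an isomorphism of displays $M_{\cG,\bar x}\cong W(\kappa)\otimes_{\Zpbr}\bM$ ignoring tensors, and on the other hand, because $\bar x$ factors through $R_\cG$, the tensors $(s_{\alpha,\bar x})$ and $(1\otimes s_\alpha)$ are both sent by the crystalline Frobenius structure into the same position; more precisely, Proposition~\ref{prop:KPDeforThy} (together with \cite[Proposition~3.2.17, Lemma~3.2.13]{KisinPappas:ParahoricIntModel}) guarantees that $(s_{\alpha,\bar x})\subset M_{\cG,\bar x}^\otimes$ and that there is \emph{some} $W(\kappa)$-linear isomorphism $W(\kappa)\otimes_{\Zpbr}\bM\cong M_{\cG,\bar x}$ carrying $(1\otimes s_\alpha)$ to $(s_{\alpha,\bar x})$. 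The remaining task is to upgrade this isomorphism, which a priori need only respect the module-with-tensors structure, to one that also respects the display structure (the $\Psi$-isomorphism). This is where one invokes that both $\big(M_{\cG,\bar x},(s_{\alpha,\bar x}),\Psi\big)$ and $\big(W(\kappa)\otimes_{\Zpbr}\bM,(1\otimes s_\alpha),b\sigma\big)$ define the same $\db b$-class: since $X_{\bar x}\cong\BX_\kappa$ we know the underlying displays are isomorphic, and the space of display-isomorphisms compatible with the fixed module isomorphism is a torsor under $J_b$-type automorphisms; using that $\cG$ is the stabiliser of $(s_\alpha)$ inside $\GL(\Lambda)$ and a standard spreading-out/Frobenius-descent argument (as in the proof of Lemma~\ref{lem:TenHom} and \S\ref{subsec:VirCrys}), one produces an isomorphism that is simultaneously $\Psi$-equivariant and tensor-preserving.

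\textbf{Main obstacle.} The delicate step is (ii): reconciling the ``display isomorphism'' coming from $X_{\bar x}\cong\BX_\kappa$ with the ``tensor isomorphism'' coming from $\bar x$ factoring through $R_\cG$, i.e.\ showing the two can be chosen to coincide. The issue is exactly the potential non-triviality of torsors under $\cG$ versus $\cG^\circ$ flagged in Remark~\ref{rmk:NonNeutral}: a priori the isomorphism matching tensors lands in a $\cG$-torsor, and one must check this torsor is trivial (equivalently, that one can pick the tensor-matching isomorphism inside $\cG^\circ(W(\kappa))\subset\GL(\Lambda)(W(\kappa))$ compatibly with Frobenius). Over a perfect field $\kappa$ with $W(\kappa)$ strictly henselian this should follow from Lang's theorem applied to $\cG^\circ_{\kappa}$ (as in the proof of Lemma~\ref{lem:dbw}), reducing matters to the reductive special fibre; I expect this is the only point requiring genuine care, and the rest is formal bookkeeping with Dieudonn\'e displays and the functoriality of the constructions in \S\ref{subsec:DeforKP}.
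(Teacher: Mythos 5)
Your construction of $\gC_\cG$ and the characterisation of its geometric points are fine and essentially match the paper (the paper invokes Oort's \cite[Proposition~2.2]{Oort:Foliations} directly for the excellent ring $\bar R_\cG$ rather than intersecting $\gC_\GL$ with $\Spec\bar R_\cG$, but these agree). The problem is with part (ii). First, a small point: your appeal to Proposition~\ref{prop:KPDeforThy} is misplaced, since that proposition concerns $\fo$-valued points in mixed characteristic; at a characteristic-$p$ geometric point the tensor-preserving \emph{module} isomorphism $M_{\cG,\bar x}\cong W(\kappa)\otimes_{\Zpbr}\bM$ is tautological from the identification $M_\cG=\Zink(R_\cG)\otimes_{\Zpbr}\bM$, so the only content is compatibility with $\Psi$. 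The genuine gap is in how you propose to achieve that compatibility. Writing $\Psi_{\bar x}=b_{\Psi,\bar x}\sigma$ with $b_{\Psi,\bar x}\in G(W(\kappa)[\ivtd p])$ under the tautological identification, what you must show is that $b_{\Psi,\bar x}$ and $b$, which you know to be $\sigma$-conjugate by an element of $\GL(\Lambda)(W(\kappa)[\ivtd p])$ (indeed by a display isomorphism), are $\sigma$-conjugate by an element of $G(W(\kappa)[\ivtd p])$. This is an injectivity statement for $\sigma$-conjugacy of the shape $B(G)\hookrightarrow B(\GL(\Lambda))$, which is false in general, and neither Lang's theorem for $\cG^\circ_\kappa$ nor the torsor language supplies it: Lang's theorem only trivialises coboundaries once you already know the class is trivial in $G$, which is exactly what is at stake. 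So the pointwise argument, as written, does not close.

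The paper's proof avoids this entirely by a family argument. One passes to the pro-(finite surjective) cover $\mathfrak D:=(\varprojlim_n\gC_{\cG,n})^{p^{-\infty}}$ of $\gC_\cG$ (where $\gC_{\cG,n}$ trivialises $X[p^n]$), over which the universal deformation becomes globally isomorphic to $\BX$. This yields a $\Psi$-equivariant isomorphism $M_R\riso W(R)\otimes_{\Zpbr}\bM$ over each affine piece of $\mathfrak D$ \emph{lifting the identity on $\bM$ at the base point}. Since the tensors $(s_{\alpha,R})$ and the constant tensors $(1\otimes s_\alpha)$ are both Frobenius-invariant sections of the (now constant) $F$-isocrystal $M_R^\otimes[\ivtd p]$ and agree at the base point, they agree everywhere by the rigidity lemma \cite[Lemma~3.9]{RapoportRichartz:Gisoc}; specialising to a lift of $\bar x$ gives the tensor-preserving display isomorphism. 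The essential ideas you are missing are therefore (a) the use of the connected constant family through the base point and (b) the rigidity of Frobenius-invariant tensors on constant $F$-isocrystals, which together replace the unavailable pointwise $\sigma$-conjugacy argument.
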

\begin{proof}
The proof is completely analogous to the proof of \cite[Proposition~2.14]{Hamacher:DeforSpProdStr}.
Since $\bar R_\cG$ is excellent (as it is a complete local noetherian ring), Oort \cite[Proposition~2.2]{Oort:Foliations} constructed the reduced closed subscheme $\gC_\cG\subset \Spec \bar R_\cG$ such that a geometric point $\bar x$ lands in $\gC_\cG$ if and only if $X_{\bar x}$ is isomorphic to $\BX$. It remains to show that the isomorphism can be chosen to preserve the tensors.

Let $X_{\gC_\cG}$ denote the restriction of the universal deformation over $\gC_\cG$. Recall that for each $n$ there exists a scheme $\gC_{\cG,n}$ finite and surjective over $\gC_\cG$ such that $X_{\gC_{\cG,n}}[p^n]\cong \BX[p^n]_{\gC_{\cG,n}}$. Therefore, over the perfection $\mathfrak{D}$ of $\varprojlim \gC_{\cG,n}$ we have $X_{\mathfrak D} \cong \BX_{\mathfrak{D}}$.

For any open affine subscheme $\Spec R\subset \mathfrak{D}$, we can define a ``Dieudonn\'e display'' $(M_R, M_{R,1},\Psi)$ over $R$, obtained as the base change of $M_\cG$; in other words, $M_R:=W(R)\otimes_{\Zink(R_{\cG})}M_\cG $, and the rest of the datum is defined so that the Hodge filtration and $\Psi$ are compatible with the scalar extension. Then we get the $\Psi$-tensors $(s_{\alpha,R})\subset M_R^\otimes$ by the scalar extension of $(s_\alpha)\subset M_\cG^\otimes$. 
Since $X_{\mathfrak{D}}\cong \BX_{\mathfrak{D}}$, we have a $\Psi$-equivariant isomorphism $M_R\cong W(R)\otimes_{\Zpbr}\bM$ lifting the identity map on $\bM$. So it follows from \cite[Lemma~3.9]{RapoportRichartz:Gisoc} that $(s_{\alpha,R})$ coincide with the scalar extensions of $(s_\alpha)\subset \bM^\otimes$.

Let $\kappa$ be an algebraically closed extension of $\Fpbar$. Then  any $\kappa$-valued  point $\bar x$ of $\gC_\cG$ can be lifted to $\mathfrak{D}$ without increasing $\kappa$ (since $\mathfrak{D}$ is pro-finite surjective over $\gC_\cG$. Choosing such a lift (and an open affine neighbourhood of it), we obtain a tensor-preserving isomorphism $M_{\cG,\bar x} \cong W(\kappa)\otimes_{\Zpbr}\bM$. This concludes the proof.
\end{proof}

By applying Proposition~\ref{prop:OortFoliation} to $R_\GL$, we obtain $\gC_\GL \subset\Spec \bar R_\GL$. It is known that
\begin{equation}\label{eqn:CenLeafGL}
\gC_\GL  \cong \Spec \Fpbar [[x_1,\cdots,x_{d'}]].
\end{equation}
Here, $d' = \langle 2\rho',\nu_{[b]}\rangle$ where $2\rho'$ is the sum of positive roots of $\GL(\Lambda)$, and $\nu_{[b]}$ is the dominant representative of the conjugacy class of $\nu_b$ where $b\in  \GL(\Lambda)(\Qpbr)$ is obtained from the Dieudonn\'e module of $\BX$. (Indeed, if $\BX$ is completely slope divisible, then one even has an explicit description of coordinates; \emph{cf.} \cite[\S7]{Chai:HeckeSiegel}.\footnote{See also \cite[\S3.2]{Hamacher:DeforSpProdStr} for the statement and argument which works for more general connected reductive groups over $\Zp$ instead of $\GL(\Lambda)$.}, and the general case follows from this special case.
) Recall from Proposition~\ref{prop:Qisg} that $d'=\dim\Qisg^\circ(\BX)_{\Fpbar}$.

\begin{thmsub}\label{thm:CenLeaf}
Assume that there exists $\xi: \Spf\fo\ra\Spf R_\cG$ that satisfies Proposition~\ref{prop:KPEtaleCycles}(\ref{prop:KPEtaleCycles:Torsor}).\footnote{As explained in Remark~\ref{rmk:NonNeutral}, this assumption can be arranged if $R_\cG$ came from some integral model of Shimura varieties constructed in \cite{KisinPappas:ParahoricIntModel}. 
} 
Let $\wh\gC_\cG^{p^{-\infty}}$ be the formal completion of the perfection of $\gC_\cG$. Then $\wh\gC_\cG^{p^{-\infty}}$ is the $\Qisg^\circ_G(\BX)_{\Fpbar}$-orbit of the closed point of $\Spf \bar R_\cG$. In other words, there exists an isomorphism $\Qisg^\circ(\BX)_{\Fpbar} \cong \wh\gC_\cG^{p^{-\infty}}$ which fits in the following commutative diagram
\[
\xymatrix{
 \Qisg^\circ_G(\BX) \times_{\Spf\Zpbr}\Spec\Fpbar \ar@{^{(}->}[r] \ar[d]_{\cong}& \Qisg^\circ_G(\BX)_{\Fpbar} \times \Spf \bar R_\cG \ar[d] \\
\wh\gC_\cG^{p^{-\infty}} \ar[r]  & \Spf \bar R_\cG
},\]
where the right vertical map is defined by Theorem~\ref{thm:QisgActionRZ}.
\end{thmsub}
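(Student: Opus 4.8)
The plan is to reduce Theorem~\ref{thm:CenLeaf} to a combination of Theorem~\ref{thm:QisgActionRZ} (which gives the $\Qisg^\circ_G(\BX)$-action on $\Spf R_\cG$), the explicit description of $\gC_\GL$ in \eqref{eqn:CenLeafGL}, and the compatibility of Oort's central leaf construction with the tensor structure from Proposition~\ref{prop:OortFoliation}. The strategy mirrors the $\GL$-case: one knows the central leaf $\gC_\GL$ is the $\Qisg^\circ(\BX)$-orbit of the closed point of $\Spf\bar R_\GL$ --- indeed, the coordinates $x_1,\dots,x_{d'}$ of $\gC_\GL$ in \eqref{eqn:CenLeafGL} match (after perfection and completion) the coordinates $x_1^{1/p^\infty},\dots,x_{d'}^{1/p^\infty}$ of $\Qisg^\circ(\BX)$ from Proposition~\ref{prop:Qisg}, compatibly with the action on $\Spf R_\GL$ --- and one wants to intersect everything with $\Spf\bar R_\cG$. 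First I would set up the orbit map: by Theorem~\ref{thm:QisgActionRZ}, applied with $\xi$ the closed point of $\Spf R_\cG$ (which trivially satisfies Proposition~\ref{prop:KPEtaleCycles}(\ref{prop:KPEtaleCycles:Torsor}) since $X_\xi = \BX$ over $\Fpbar$ and the crystalline tensors literally are $(s_\alpha)$), we get a morphism $\Qisg^\circ_G(\BX)\times_{\Spf\Zpbr}\Spec\Fpbar\to\Spf\bar R_\cG$; composing with $\Spf\bar R_\cG\hra\Spf\bar R_\GL$ this is the restriction of the $\GL$-orbit map, which is a closed immersion with image $\wh\gC_\GL^{p^{-\infty}}$.

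The key step is then to identify the scheme-theoretic image of the $\Qisg^\circ_G(\BX)$-orbit map inside $\Spf\bar R_\cG$ with $\wh\gC_\cG^{p^{-\infty}}$. For the inclusion ``orbit $\subseteq$ central leaf'': any $\Fpbar$-algebra point $\gamma$ of $\Qisg^\circ_G(\BX)$ sends the closed point to a point $\xi'$ whose associated $p$-divisible group $X_{\xi'}$ is quasi-isogenous to $\BX$ by a \emph{tensor-preserving} quasi-isogeny (Corollary~\ref{cor:TenHom}); but a tensor-preserving self quasi-isogeny lifting along the orbit has $X_{\xi'}\cong\BX_\kappa$ as $p$-divisible groups (forgetting tensors, since $\gamma$ as an element of $\Qisg^\circ(\BX)$ already realizes $X_{\xi'}\cong\BX$ over the $\GL$-central leaf), so $\xi'$ factors through $\gC_\cG$ by the defining property in Proposition~\ref{prop:OortFoliation}. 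For the reverse inclusion ``central leaf $\subseteq$ orbit'': given a point $\bar x$ of $\gC_\cG^{p^{-\infty}}$, by Proposition~\ref{prop:OortFoliation} there is a tensor-preserving isomorphism $M_{\cG,\bar x}\cong W(\kappa)\otimes_{\Zpbr}\bM$ of Dieudonn\'e displays, i.e.\ a tensor-preserving isomorphism $X_{\bar x}\cong\BX_\kappa$; composing with the tautological quasi-isogeny $\iota_{\bar x}:X_{\bar x}\dashrightarrow\BX$ gives a tensor-preserving self quasi-isogeny of $\BX_\kappa$, hence a $\kappa$-point $\gamma_{\bar x}$ of $\Qisg^\circ_G(\BX)$ with $\gamma_{\bar x}\cdot(\text{closed point}) = \bar x$. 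This is exactly the content of Lemma~\ref{lem:QisgG} transported from the $\fo_C$-point setting to the $\kappa$-point setting over the special fibre, so I would either invoke that lemma directly with $\fo = W(\Fpbar)$ (and $C$ adjusted) or repeat its short argument. Combining the two inclusions with the fact that both sides are reduced (central leaf by construction; orbit because $\Qisg^\circ_G(\BX)_{\Fpbar}\cong\Spec\Fpbar[x_1^{1/p^\infty},\dots,x_d^{1/p^\infty}]$ is perfect, hence reduced) gives the set-theoretic and then scheme-theoretic equality, and the isomorphism $\Qisg^\circ_G(\BX)_{\Fpbar}\riso\wh\gC_\cG^{p^{-\infty}}$ follows since the orbit map is injective on points (the stabilizer of the closed point in $\Qisg^\circ_G(\BX)$ is trivial, as it is for $\Qisg^\circ(\BX)$) and both are affine formal schemes of the same explicit shape.

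The commutative diagram in the statement then records precisely that this isomorphism intertwines the tautological $\Qisg^\circ_G(\BX)$-action on itself (left column, as an orbit) with the action on $\Spf\bar R_\cG$ from Theorem~\ref{thm:QisgActionRZ}; this is automatic from the construction of the orbit map as the $\xi = \text{(closed point)}$ specialization of \eqref{eqn:QisgActionRZ}, so no additional argument is needed beyond chasing definitions. The main obstacle I anticipate is the reverse inclusion, specifically verifying that the isomorphism $X_{\bar x}\cong\BX_\kappa$ furnished by Proposition~\ref{prop:OortFoliation} can be taken to be the \emph{comparison-preserving} one needed to land in $\Qisg^\circ_G$ rather than just $\Qisg^\circ$ --- i.e.\ that it preserves tensors in the sense of Definition~\ref{def:TenHom} after base change to $\Bcris^+$, not merely on Dieudonn\'e displays; but this is exactly what Proposition~\ref{prop:OortFoliation} delivers (the isomorphism of displays sends $(s_{\alpha,\bar x})$ to $(1\otimes s_\alpha)$), and the passage from the display-level statement to the $\Bcris^+$-level condition is the same bookkeeping already carried out in the proof of Lemma~\ref{lem:TenHom} and Lemma~\ref{lem:QisgG}. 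A secondary subtlety is handling completions and perfections carefully so that ``$\Qisg^\circ_G(\BX)$-orbit of the closed point'' is interpreted as a formal subscheme of $\Spf\bar R_\cG$ matching $\wh\gC_\cG^{p^{-\infty}}$; here one uses that $\gC_\cG\subset\gC_\GL$ is a closed subscheme, that perfection and formal completion commute appropriately, and that the orbit in the $\GL$-case is already known to be $\wh\gC_\GL^{p^{-\infty}}$.
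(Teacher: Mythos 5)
Your first direction (orbit $\subseteq$ central leaf) and the overall set-up match the paper, but the reverse inclusion --- the heart of the theorem --- has a genuine gap. You propose to take a geometric point $\bar x:\Spec\kappa\to\gC_\cG$, use Proposition~\ref{prop:OortFoliation} to get a tensor-preserving isomorphism $X_{\bar x}\cong\BX_\kappa$, compose with ``the tautological quasi-isogeny $\iota_{\bar x}$'' and obtain $\gamma_{\bar x}\in\Qisg^\circ_G(\BX)(\kappa)$ with $\gamma_{\bar x}\cdot(\text{closed point})=\bar x$. Two things go wrong. First, there is no tautological quasi-isogeny $X_{\bar x}\dra\BX_\kappa$ at a general geometric point of $\gC_\cG$: the quasi-isogeny structure on the universal deformation exists only over artinian quotients of $R_\cG$ (equivalently over $\Spf$), or over the isogeny leaf, not at an arbitrary prime of $\bar R_\cG$. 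Second, and more fundamentally, $\Qisg^\circ(\BX)(\kappa)$ is \emph{trivial} for every field $\kappa$ (the connected part $\wt\cH^{G,\circ}_b$ has no nonzero field-valued points, and automorphisms of the isocrystal over any algebraically closed field are already $J_b(\Qp)$), so no field-valued point of $\Qisg^\circ_G(\BX)$ can move the closed point to $\bar x$. The statement to be proved is an equality of closed formal subschemes of $\Spf\bar R_\cG$, i.e.\ injectivity of a surjection of complete local rings, and this simply cannot be detected on field-valued points of the formal schemes involved: both sides have only the closed point. This is also why your proof never genuinely uses the hypothesis of the theorem (your claim that the closed point ``trivially satisfies'' Proposition~\ref{prop:KPEtaleCycles}(\ref{prop:KPEtaleCycles:Torsor}) misreads that condition, which concerns the Tate module of a lift over a finite extension $\fo$ of $\fo_{\breve E}$, not an $\Fpbar$-point).

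The paper's route around this is a characteristic-zero lifting argument, and that is where the hypothesis enters. One first establishes the case $\cG=\GL(\Lambda)$ honestly (not by ``matching coordinates'': over $\wh\gC_\GL^{p^{-\infty}}$ the universal $p$-divisible group is completely slope divisible and its slope filtration splits over a perfect base, so $X\cong\BX$ \emph{as a family}, which produces the inverse of the orbit map; the general $\GL$-case follows via an isogeny to a completely slope divisible $\BX'$). For general $\cG$ one then has closed immersions $\Qisg^\circ_G(\BX)_{\Fpbar}\hra\wh\gC_\cG^{p^{-\infty}}\hra\Qisg^\circ(\BX)_{\Fpbar}$, lifts them to $\fo$-flat formal subschemes $\Qisg^\circ_G(\BX)_\xi\hra\mathfrak{D}_{\cG,\xi}\hra\Qisg^\circ(\BX)_\xi$ using a point $\xi:R_\cG\to\fo$ as in the hypothesis, and shows the first inclusion is a bijection on $\fo_C$-points by Lemma~\ref{lem:QisgG} (which rests on the \'etale-tensor criterion of Proposition~\ref{prop:KPEtaleCycles}, available only in characteristic zero); Zariski density of such points in the flat lift then forces the inclusion to be an isomorphism, and reduction mod $\m_\fo$ gives the theorem. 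To repair your argument you would need to upgrade the pointwise isomorphism of Proposition~\ref{prop:OortFoliation} to an isomorphism of families over (a cover of) $\wh\gC_\cG^{p^{-\infty}}$ compatible with the quasi-isogeny data, which is exactly the content the characteristic-zero detour supplies.
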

\begin{proof}
Consider the map $\Qisg^\circ_G(\BX)_{\Fpbar}\ra \Spf \bar R_\cG$ induced by the natural action of $\Qisg^\circ_G(\BX)_{\Fpbar}$ on the  closed point (i.e., the composition of the top horizontal arrow and the right vertical arrow in the diagram of the statement). By Proposition~\ref{prop:OortFoliation}, it factors through the completion of $\gC_\cG$ so in turn it factors through $\wh\gC_\cG^{p^{-\infty}}$ by perfectness of $\Qisg^\circ_G(\BX)_{\Fpbar}$.
Therefore, we obtain a natural map
\begin{equation}\label{eqn:CenLeafOrbit}
\Qisg^\circ_G(\BX)_{\Fpbar}\ra \wh\gC_\cG^{p^{-\infty}}
\end{equation}
(not proven to be an isomorphism yet), which fits in the above  commutative diagram.

Let us first handle the special case when $\cG = \GL(\Lambda)$ and $\BX$ is completely slope divisible. Then  the universal deformation $X_{\gC_\GL}$ restricted to $\gC_\GL$ is completely slope divisible (\emph{cf.} \cite[\S2.4.2]{Mantovan:Thesis}) and the associated grading of the slope filtration is isomorphic to $\BX_{\gC_\GL}$ (\emph{cf.} \cite[Lemma~3.4]{Mantovan:Thesis}). It follows that we have an isomorphism $X_{\wh\gC_\GL^{p^{-\infty}}}\cong \BX_{\wh\gC_\GL^{p^{-\infty}}}$, since the slope filtration of a completely slope divisible $p$-divisible group canonically splits over a perfect base. Therefore, the natural map $\wh\gC_\GL^{p^{-\infty}}\ra\Spf R_\GL$ corresponds to a quasi-isogeny
\[
X_{\wh\gC_\GL^{p^{-\infty}}}\cong \BX_{\wh\gC_\GL^{p^{-\infty}}} \dra \BX_{\wh\gC_\GL^{p^{-\infty}}} ,
\]
which corresponds to a map $\wh\gC_\GL^{p^{-\infty}}\ra\Qisg^\circ(\BX)_{\Fpbar}$. This gives the inverse of (\ref{eqn:CenLeafOrbit}).

Now if we retain the assumption that $\cG = \GL(\Lambda)$ and allow $\BX$ to be any $p$-divisible group, then there exists a quasi-isogeny $\gamma:\BX\dra\BX'$ for some completely slope divisible $p$-divisible group $\BX'$ over $\Fpbar$. Note that $\gamma$ induces an isomorphism $\Qisg^\circ(\BX)\cong \Qisg^\circ(\BX')$ defined by the ``conjugation by $\gamma$''. If we let $\gC_\GL'$ denote the central leaf in the universal deformation ring of $\BX'$, then by \cite[Proposition~4.2(2)]{Hamacher:DeforSpProdStr} we have a $\Qisg^\circ(\BX)$-equivariant isomorphism $\wh\gC_\GL^{p^{-\infty}}\cong \wh\gC_\GL^{\prime p^{-\infty}}$. This proves the case when $\cG=\GL(\Lambda)$.

Now, let us handle the general case. By the proof of Proposition~\ref{prop:OortFoliation}, $\gC_\cG$ is the underlying reduced scheme of $\gC_\GL\times_{\Spec R_\GL}\Spec R_\cG$. From the $\GL(\Lambda)$-case,  the natural map $\Qisg^\circ_G(\BX)_{\Fpbar}\ra \wh\gC_\cG^{p^{-\infty}}$ (\ref{eqn:CenLeafOrbit}) is a closed immersion. Let us first show that this is an isomorphism.

We fix $\xi:R_\cG\ra\fo$ (for some finite extension $\fo$ of $\fo_{\breve{E}}$) that satisfies Proposition~\ref{prop:KPEtaleCycles}(\ref{prop:KPEtaleCycles:Torsor}). Then the unique $\fo$-lift of $\wh\gC_\GL^{p^{-\infty}}$ can be identified with $\Qisg^\circ(\BX)_\xi$ (\emph{cf.} Definition~\ref{def:QisgOrbits}), and this lift respects the map to $\Spf R_\GL$. Let $\mathfrak{D}_{\cG,\xi}\subset \Qisg^\circ(\BX)_\xi$ denote the unique $\fo$-flat formal closed subscheme with special fibre $\wh\gC_\cG^{p^{-\infty}}$. By construction, we have a natural closed immersion
\[
\Qisg^\circ_G(\BX)_\xi \hra \mathfrak{D}_{\cG,\xi}.
\]
Furthermore, this map induces a bijection on the set of $\fo_C$-points for any algebraically closed complete extension $C$ of $\Frac(\fo)$ by Lemma~\ref{lem:QisgG}, so it has to be an isomorphism. (To see this, we may replace $\Qisg^\circ_G(\BX)_\xi$ and $\mathfrak{D}_{\cG,\xi}$ with the smallest formal closed subschemes of $\Spf R_\cG$ which the structure morphism factors through, and conclude by Zariski density.) By reducing this isomorphism modulo $\m_\fo$, we show that (\ref{eqn:CenLeafOrbit}) is an isomorphism. This shows the theorem  in general.
\end{proof}

\begin{rmksub}
Theorem~\ref{thm:CenLeaf} does not force $\gC_\cG$ to be isomorphic to a formal spectrum of a formal power series ring over $\Fpbar$. It seems plausible to show that $\gC_\cG$ is formally smooth by generalising the construction of ``generalised Serre-Tate coordinates'' (\emph{cf.} \cite[\S3.2]{Hamacher:DeforSpProdStr}). Later in this paper, we show the formal smooth of $\gC_\cG$ in the case when $R_\cG$ arises from the completed local ring of some integral model of parahoric-level Hodge-type Shimura variety (constructed by Kisin and Pappas \cite{KisinPappas:ParahoricIntModel}); \emph{cf.} Corollary~\ref{cor:CenLeaf}.
\end{rmksub}


\subsection{Product structure of the Newton stratum}
Let $\gN_\GL\subset \Spec \bar R_\GL$ be the closed Newton stratum; i.e., $\gN_\GL$ is defined by the property that $\bar x:\Spec \kappa \ra \Spec \bar R_\GL$ factors through $\gN_\GL$ if and only if $X_{\bar x}$ is isogenous to $\BX_\kappa$. We also define the isogeny leaf $\gJ_\GL\subset \Spec \bar R_\GL$; i.e., the maximal closed subset (viewed as a reduced subscheme) where there exists a quasi-isogeny $X_{\gJ_\GL} \dra \BX_{\gJ_\GL}$ over $\gJ_\GL$ (not just over the formal completion of $\gJ_\GL$) which lifts the identity map on $\BX$. Identifying $R_\GL$ as a completed local ring of some Rapoport-Zink space,  $\gJ_\GL$ is the spectrum of the completed local ring of the underlying reduced scheme of the Rapoport-Zink space. Note that $\gJ_\GL\subset \gN_\GL$.

Let us first recall the almost product structure of the Newton stratum \cite[\S4.1]{Hamacher:DeforSpProdStr} in our language. Let $X_{\gJ_\GL}$ denote the restriction of the universal deformation, and we have a quasi-isogeny $\iota:X_{\gJ_\GL}\dra \BX_{\gJ_\GL}$.
Then the natural $\Qisg^\circ(\BX)$-action on $\Spf R_\GL$ induces the following map
\begin{equation}\label{eqn:APSviaOrbitGL}
\Qisg^\circ(\BX)_{\Fpbar}\times \wh\gJ^{p^{-\infty}}_\GL \ra
\Spf \bar R_\GL.
\end{equation}
Identifying $R_\GL$ as a completed local ring of some Rapoport-Zink space, the map (\ref{eqn:APSviaOrbitGL}) is given by the following quasi-isogeny over $\Qisg^\circ(\BX)_{\Fpbar}\times \wh\gJ^{p^{-\infty}}_\GL$
\begin{equation}\label{eqn:APSQisgGL}
\xymatrix@1{
\pr_2^*X_{\wh\gJ^{p^{-\infty}}_\GL} \ar@{-->}[r]^-{\pr_2^*(\iota)} &\BX_{\Qisg^\circ(\BX)_{\Fpbar}\times \wh\gJ^{p^{-\infty}}_\GL} \ar@{-->}[r]^-{\pr_1^*(\gamma^u)} & \BX_{\Qisg^\circ(\BX)_{\Fpbar}\times \wh\gJ^{p^{-\infty}}_\GL},
}\end{equation}
where $\pr_i$ (with $i=1,2$) is the $i$th projection from $\Qisg^\circ(\BX)_{\Fpbar}\times \wh\gJ^{p^{-\infty}}_\GL$.

The following can be read off from the work of Hamacher:
\begin{propsub}\label{prop:APSGL}
The map (\ref{eqn:APSviaOrbitGL}) induces an isomorphism
\[
\Qisg^\circ(\BX)_{\Fpbar}\times \wh\gJ^{p^{-\infty}}_\GL \riso \wh\gN^{p^{-\infty}}_\GL.
\]
Furthermore, it coincides with the completion of the natural isomorphism $\pi_\infty:\gC_\GL^{p^{-\infty}}\times \gJ_\GL^{p^{-\infty}} \riso \gN_\GL^{p^{-\infty}}$ in \cite[Corollary~4.4]{Hamacher:DeforSpProdStr} via the identification $\Qisg^\circ(\BX)_{\Fpbar}\cong \wh\gC_\cG^{p^{-\infty}}$ given by Theorem~\ref{thm:CenLeaf}.
\end{propsub}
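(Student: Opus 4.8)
The plan is to deduce Proposition~\ref{prop:APSGL} by translating the statement entirely into the language of the universal cover $\wt\cH_b$ and the self quasi-isogeny group $\Qisg(\BX)$, and then matching this reformulation with Hamacher's Corollary~4.4 term by term. First I would recall that $\gC_\GL^{p^{-\infty}}$ and $\gJ_\GL^{p^{-\infty}}$ sit inside $\gN_\GL^{p^{-\infty}}$ as a central leaf and an isogeny leaf, and that Hamacher's map $\pi_\infty$ is built from transporting the central leaf along the quasi-isogeny correspondences over the isogeny leaf; on the level of completed local rings, this is precisely the map obtained by letting the tautological section of $\Qisg^\circ(\BX)$ act on the universal deformation restricted to $\wh\gJ_\GL^{p^{-\infty}}$, which is what~(\ref{eqn:APSviaOrbitGL}) encodes via the composite quasi-isogeny~(\ref{eqn:APSQisgGL}). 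So the content splits into (a) showing~(\ref{eqn:APSviaOrbitGL}) is an isomorphism and (b) showing it agrees with Hamacher's $\pi_\infty$ after the identification $\Qisg^\circ(\BX)_{\Fpbar}\cong\wh\gC_\GL^{p^{-\infty}}$ of Theorem~\ref{thm:CenLeaf}.

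For (b), the key is Theorem~\ref{thm:CenLeaf}: the isomorphism $\Qisg^\circ(\BX)_{\Fpbar}\riso\wh\gC_\GL^{p^{-\infty}}$ was constructed exactly as the $\Qisg^\circ(\BX)$-orbit of the closed point, i.e.\ by restricting~(\ref{eqn:APSviaOrbitGL}) to the fibre over the closed point of $\wh\gJ_\GL^{p^{-\infty}}$ (where $X$ becomes $\BX$ itself and $\iota$ becomes the identity, after passing to the completely slope divisible model as in the proof of that theorem). Since Hamacher's $\pi_\infty$ restricts to the identity-compatible inclusion $\gC_\GL^{p^{-\infty}}\hookrightarrow\gN_\GL^{p^{-\infty}}$ on that same fibre, and both maps are $\Qisg^\circ(\BX)$-equivariant by construction (the group acts through the first factor on the source of~(\ref{eqn:APSviaOrbitGL}) and through quasi-isogeny correspondences on $\gN_\GL$), the two maps agree on a Zariski-dense locus — namely the orbit of the central leaf — and hence coincide. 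I would phrase this as: both $\pi_\infty$ and~(\ref{eqn:APSviaOrbitGL}) are $\Qisg^\circ(\BX)_{\Fpbar}\times(\text{trivial})$-equivariant morphisms whose restrictions to $\{e\}\times\wh\gJ_\GL^{p^{-\infty}}$ are the given inclusion of the isogeny leaf, and invoke the uniqueness of equivariant extensions together with the fact that $\Qisg^\circ(\BX)_{\Fpbar}\cdot\wh\gJ_\GL^{p^{-\infty}}$ is dense in $\wh\gN_\GL^{p^{-\infty}}$.

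For (a), I would note that the bijectivity of $\pi_\infty$ is exactly the content of \cite[Corollary~4.4]{Hamacher:DeforSpProdStr}, so once (b) is established — that~(\ref{eqn:APSviaOrbitGL}) coincides with the completion of $\pi_\infty$ — the isomorphism claim is immediate. Thus the logical order is: first recall the construction of $\pi_\infty$ in Hamacher and observe it is given, on completed local rings, by the quasi-isogeny correspondence~(\ref{eqn:APSQisgGL}); second, use Theorem~\ref{thm:CenLeaf} to identify the restriction of~(\ref{eqn:APSviaOrbitGL}) over the closed point of the isogeny leaf with the identification $\Qisg^\circ(\BX)_{\Fpbar}\cong\wh\gC_\GL^{p^{-\infty}}$; third, invoke $\Qisg^\circ(\BX)$-equivariance and density to conclude that~(\ref{eqn:APSviaOrbitGL}) equals the completion of $\pi_\infty$; finally, transport the isomorphism assertion of \cite[Corollary~4.4]{Hamacher:DeforSpProdStr} across this identification.

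The main obstacle I anticipate is the bookkeeping in (b): making precise that Hamacher's construction of $\pi_\infty$ — phrased in terms of ``generalised Serre-Tate coordinates'' and isogeny correspondences on the Rapoport-Zink space — really is, on the nose, the morphism classifying the composite quasi-isogeny~(\ref{eqn:APSQisgGL}), so that the two maps can be compared by equivariance. This requires unwinding the moduli interpretation of $R_\GL$ as a completed local ring of a Rapoport-Zink space and checking that the $\Qisg^\circ(\BX)$-action used here (post-composition with self quasi-isogenies) is the same correspondence Hamacher uses to transport the central leaf; once that identification is pinned down, the density-and-equivariance argument is formal, and the reduction to the completely slope divisible case (needed so that $X_{\wh\gC_\GL^{p^{-\infty}}}\cong\BX_{\wh\gC_\GL^{p^{-\infty}}}$ splits off canonically) proceeds exactly as in the proof of Theorem~\ref{thm:CenLeaf} using \cite[Proposition~4.2(2)]{Hamacher:DeforSpProdStr}.
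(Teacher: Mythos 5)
Your proposal is correct and follows essentially the same route as the paper: both reduce the proposition to \cite[Corollary~4.4]{Hamacher:DeforSpProdStr} by identifying the orbit map~(\ref{eqn:APSviaOrbitGL}) with the completion of Hamacher's $\pi_\infty$ via the identification of Theorem~\ref{thm:CenLeaf}, the paper dispatching that identification with the phrase ``unwinding its proof'' where you supply the equivariance-plus-agreement-on-the-isogeny-leaf argument. The only cosmetic difference is that the paper first checks directly (by algebraising the affine formal scheme and invoking perfectness) that~(\ref{eqn:APSviaOrbitGL}) factors through $\wh\gN^{p^{-\infty}}_\GL$, a fact you obtain a posteriori from the identification with $\pi_\infty$.
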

\begin{proof}
Note that the map (\ref{eqn:APSviaOrbitGL})  factors through the closed Newton stratum; indeed, since $\Qisg^\circ(\BX)_{\Fpbar}\times \wh\gJ^{p^{-\infty}}_\GL$ is an affine formal scheme we can view the $p$-divisible group  $\pr_2^*X_{\wh\gJ^{p^{-\infty}}_\GL}$ over the affine scheme algebraising its base formal scheme, and only the geometric fibres of $X_{\gJ_\GL}$ occurs there. Now, by perfectness, we see that the map (\ref{eqn:APSviaOrbitGL})  factors through $\wh\gN^{p^{-\infty}}_\GL$.

Hamacher showed that there exists a natural isomorphism $\pi_\infty:\gC_\GL^{p^{-\infty}}\times \gJ_\GL^{p^{-\infty}} \riso \gN_\GL^{p^{-\infty}}$; \emph{cf.} \cite[Corollary~4.4]{Hamacher:DeforSpProdStr}. Unwinding its proof and identifying $\wh\gC^{p^{-\infty}}_\GL\cong\Qisg^\circ(\BX)_{\Fpbar}$ (\emph{cf.} Theorem~\ref{thm:CenLeaf}), one obtains that our the map $\Qisg^\circ(\BX)_{\Fpbar}\times \wh\gJ^{p^{-\infty}}_\GL \riso \wh\gN^{p^{-\infty}}_\GL$ coincides with the map induced by the isomorphism $\pi_\infty$
 on the formal completions.
\end{proof}

To define the Newton stratification on $\Spec \bar R_\cG$, we need the following lemma.
\begin{lemsub}\label{lem:GIsoc}
For brevity, we write $S:=\bar R_\cG^{p^{-\infty}}$.
Then there exists an exact faithful $\otimes$-functor
\[
\Rep_{\Qp}G \ra F\textrm{-}\isoc_{S}
\]
sending $\Lambda[\ivtd p]$ to $(W(S) \otimes_{\Zink(R_\cG)} M_\cG[\ivtd p],\Psi)$, which depends only on $[b]$ up to  isomorphism.
\end{lemsub}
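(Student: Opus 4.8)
The plan is to construct the desired exact faithful $\otimes$-functor $\Rep_{\Qp}G \ra F\text{-}\isoc_S$ by leveraging the Tannakian formalism together with the key structure of the Kisin--Pappas universal deformation established in the previous section, especially the fact that $\Psi:\wt M_{\cG,1}\riso M_\cG$ preserves the tensors $(s_\alpha)$. First I would recall that the tensors $(s_\alpha)\subset \Lambda^\otimes$ cut out $\cG\hra \GL(\Lambda)$ (Proposition~\ref{prop:Chevalley}), hence $G\hra \GL(\Lambda)[\ivtd p]$ is the pointwise stabiliser of $(s_\alpha)$, so that by Deligne's reconstruction theorem $\Rep_{\Qp}G$ is equivalent to the full subcategory of $\Rep_{\Qp}\GL(\Lambda)$ generated by $\Lambda[\ivtd p]$ and its duals, with the fibre functor ``forget to $\Qp$-vector spaces'' rigidified by the requirement that the tensors $(s_\alpha)$ be sent to themselves. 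Concretely, an object of $\Rep_{\Qp}G$ is a $\Qp$-vector space $V$ obtained as a subquotient of $\Lambda[\ivtd p]^\otimes$ stable under $G$, and we must associate to it an $F$-isocrystal over $S$ functorially.

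The construction itself: over $S = \bar R_\cG^{p^{-\infty}}$ we have the $\Psi$-module $(W(S)\otimes_{\Zink(R_\cG)}M_\cG[\ivtd p],\Psi)$, which is the $F$-isocrystal attached to $\Lambda[\ivtd p]$ (this is the base change along $\Zink(R_\cG)\hra W(S)$ of the universal Dieudonn\'e display, inverted at $p$; that $W(S)$ receives $\Zink(R_\cG)$ compatibly with Frobenius is because $S$ is perfect, cf.\ the discussion of $\Zink$ in \S\ref{subsec:DeforKP}). Since $\Psi$ preserves $(s_\alpha)\subset (W(S)\otimes_{\Zink(R_\cG)}M_\cG)^\otimes[\ivtd p]$, the map $\Lambda[\ivtd p]\mapsto (W(S)\otimes_{\Zink(R_\cG)}M_\cG[\ivtd p],\Psi)$ extends to a $\otimes$-functor on the full Tannakian subcategory generated by $\Lambda[\ivtd p]$: one sends $V\subset \Lambda[\ivtd p]^\otimes$ (a $G$-subrepresentation) to the sub-$F$-isocrystal $W(S)\otimes_{\Qp}V \hookrightarrow (W(S)\otimes_{\Zink(R_\cG)}M_\cG[\ivtd p])^\otimes$ — this submodule is $\Psi$-stable precisely because $V$ is cut out inside $\Lambda[\ivtd p]^\otimes$ by $G$-invariant linear data (equivalently by the $(s_\alpha)$-stabiliser condition), and $\Psi$ respects exactly that data; then one passes to subquotients. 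Exactness is the statement that this procedure is compatible with kernels and cokernels, which holds since forming $W(S)\otimes_{\Qp}(-)$ is exact and the category of $F$-isocrystals over $S$ is abelian. Faithfulness follows because $W(S)$ is faithfully flat over $\Qp$ (indeed $W(S)[\ivtd p]\neq 0$), so a nonzero morphism of representations induces a nonzero morphism of $F$-isocrystals; the $\otimes$-structure is inherited from that on $W(S)$-modules.

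For the dependence on $[b]$ only up to isomorphism: if $b' = g\iv b\sigma(g)$ with $g\in G(\Qpbr)$, one checks that the Kisin--Pappas deformation data $(M_\cG, M_{\cG,1},\Psi)$ built from $b$ and from $b'$ differ by the evident $G$-equivariant transport of structure induced by $g$ — at the level of the attached fibre at the closed point this is the isomorphism $\bM_b[\ivtd p]\riso \bM_{b'}[\ivtd p]$ preserving $(s_\alpha)$ recorded in \S\ref{subsec:VirCrys}, and it extends over $S$ because the local model $\mathrm{M}^\loc_{\cG,\set\mu}$ and hence $R_\cG$ depends on $[b]$ only through $\set\mu$. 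Transporting the functor along this isomorphism gives a natural $\otimes$-isomorphism of the two functors. I expect the main obstacle to be the bookkeeping needed to make ``$\Psi$ preserves the sub-$F$-isocrystal $W(S)\otimes_{\Qp}V$'' precise for subquotients rather than merely for subobjects of tensor powers, i.e.\ verifying that the $\otimes$-functor is well defined on all of $\Rep_{\Qp}G$ and not just on the generating objects; this is where one genuinely uses Deligne's theorem that every object of $\Rep_{\Qp}G$ is such a subquotient, and one must confirm the construction is independent of the chosen presentation — a standard but slightly delicate Tannakian argument, essentially identical to the one used by Kisin in \cite{Kisin:IntModelAbType} for integral canonical models, which I would cite rather than redo in full.
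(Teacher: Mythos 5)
Your construction is correct in substance and rests on exactly the two inputs the paper uses: the identification $M_\cG=\Zink(R_\cG)\otimes_{\Zp}\Lambda$ of the underlying module, and the fact (established in \S\ref{subsec:DeforKP}) that $\Psi$ preserves the tensors $(s_\alpha)$. But the paper packages these so that the ``main obstacle'' you flag never arises: under the trivialisation, $\Psi$ is $b_\Psi\sigma$ for an element $b_\Psi\in\GL(\Lambda)(\Zink(R_\cG)[\ivtd p])$ fixing all $s_\alpha$, hence $b_\Psi\in G(\Zink(R_\cG)[\ivtd p])\subset G(W(S)[\ivtd p])$ by Proposition~\ref{prop:Chevalley}; the functor is then defined uniformly on \emph{all} of $\Rep_{\Qp}G$ by $V\mapsto(W(S)\otimes_{\Qp}V,\,b_\Psi\sig)$, so exactness, faithfulness, the $\otimes$-structure, and compatibility with subquotients are immediate and there is no well-definedness to check. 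Relatedly, one phrase in your argument should be corrected: a $G$-subrepresentation $V\subset\Lambda[\ivtd p]^{\otimes n}$ is \emph{not} in general ``cut out by the $(s_\alpha)$''; what is true is that the pointwise stabiliser of the $(s_\alpha)$ is $G$, so the tensor-preserving operator $b_\Psi$ lies in $G$ and therefore stabilises $W(S)\otimes_{\Qp}V$ (while $\sigma$ does so because $V$ is defined over $\Qp$) --- this is the precise reason your submodule is $\Psi$-stable, and it is the same reason once unwound. Finally, on independence: the paper's proof varies the tensor-preserving trivialisation $\Zink(R_\cG)\otimes_{\Zp}\Lambda\riso M_\cG$, which changes $b_\Psi$ only up to $\sigma$-$G$-conjugacy and hence the functor only up to isomorphism, whereas you vary $b$ within $[b]$; both readings are legitimate, but the former is the choice actually made in the construction and is the one that needs to be checked.
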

\begin{proof}
By construction, we have a tensor-preserving isomorphism $\Zink(R_\cG)\otimes_{\Zp}\Lambda\riso M_\cG$. Therefore, $\Psi:\sigma^* M_\cG[\ivtd p]\ra M_\cG[\ivtd p]$ defines an element $b_\Psi\in G(\Zink(R_\cG)[\ivtd p])$. By viewing  $b_\Psi\in G(W(S)[\ivtd p])$, we obtain an $F$-isocrystal $(W(S)\otimes_{\Zp}V,b_\Psi\sig)$ over $\Spec S$ to each algebraic $G$-representation $V$ over $\Qp$. Since $S$ is perfect, this gives the desired exact faithful $\otimes$-functor. Now, a different choice tensor-preserving isomorphism $\Zink(R_\cG)\otimes_{\Zp}\Lambda\riso M_\cG$ modifies $b_\Psi$ up to $\sigma$-$G(\Zink(R_\cG)[\ivtd p])$ conjugacy, so it does not affect the resulting $\otimes$-functor up to isomorphism.
\end{proof}

By the previous lemma, we can apply \cite[Theorem~3.6]{RapoportRichartz:Gisoc} to obtain the Newton stratification on $\Spec \bar R_\cG^{p^{-\infty}}$. Since $\Spec \bar R_\cG^{p^{-\infty}}$ is homeomorphic to $\Spec \bar R_\cG$, we may regard it as a stratification on $\Spec \bar R_\cG$.

Let $\gN_\cG\subset \Spec \bar R_\cG$ be the closed Newton stratum. We define the isogeny leaf $\gJ_\cG\subset \Spec \bar R_\cG$ as the reduced intersection of $\gJ_\GL$ and $\Spec \bar R_\cG$. We also write $\wh\gN_\cG^{p^{-\infty}}$ and $\wh\gJ_\cG^{p^{-\infty}}$ denote the formal completions of the perfections.
\begin{thmsub}\label{thm:APS}
The isomorphism $\Qisg^\circ(\BX)_{\Fpbar}\times \wh\gJ^{p^{-\infty}}_\GL \riso \wh\gN^{p^{-\infty}}_\GL$ in Proposition~\ref{prop:APSGL} restricts to an isomorphism
\[
\Qisg^\circ_G(\BX)_{\Fpbar}\times \wh\gJ^{p^{-\infty}}_\cG \riso \wh\gN^{p^{-\infty}}_\cG.
\]
\end{thmsub}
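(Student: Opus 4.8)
The plan is to bootstrap the parahoric case from the $\GL(\Lambda)$ case established in Proposition~\ref{prop:APSGL}, exactly in the spirit of how Theorem~\ref{thm:CenLeaf} was deduced from the $\GL(\Lambda)$-version of the central leaf statement. First I would observe that, since $\gN_\cG$ is cut out (as a topological space, hence as a reduced scheme) by the Newton stratification coming from Lemma~\ref{lem:GIsoc}, and since the $\Qisg^\circ_G(\BX)$-action preserves the tensors $(s_\alpha)$, the natural action map composed with the inclusion $\Qisg^\circ_G(\BX)\hra\Qisg^\circ(\BX)$ and the embedding $\wh\gJ^{p^{-\infty}}_\cG \hra \wh\gJ^{p^{-\infty}}_\GL$ gives a morphism
\[
\Qisg^\circ_G(\BX)_{\Fpbar}\times \wh\gJ^{p^{-\infty}}_\cG \ra \wh\gN^{p^{-\infty}}_\cG
\]
which is the restriction of the isomorphism $\pi_\infty$ of Proposition~\ref{prop:APSGL}. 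Here one needs that this composite indeed lands in $\wh\gN^{p^{-\infty}}_\cG$ and not just in $\wh\gN^{p^{-\infty}}_\GL\cap\Spf\bar R_\cG$: the factoring through $\Spf\bar R_\cG$ is exactly Theorem~\ref{thm:QisgActionRZ} applied to the isogeny leaf rather than to the closed point (one enlarges the base from $\Spf\fo$ of Proposition~\ref{prop:QisgActionRZ} to the completed local ring of the reduced Rapoport-Zink space, which is covered by $\fo_C$-points and hence handled by the same Zariski-density argument), and the Newton polygon being $[b]$ throughout is automatic since $\iota$ is a quasi-isogeny to $\BX$.

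Next I would prove that this restricted map is a \emph{closed immersion}. This is immediate from the $\GL(\Lambda)$-case: $\pi_\infty$ is an isomorphism of perfect schemes, $\Qisg^\circ_G(\BX)\hra\Qisg^\circ(\BX)$ is a closed immersion of perfect formal schemes (Definition~\ref{def:Qisg}, since $\wt\cH^G_b\hra\wt\cH_b$ is closed by Lemma~\ref{lem:TenHom}), and $\wh\gJ^{p^{-\infty}}_\cG\hra\wh\gJ^{p^{-\infty}}_\GL$ is closed by definition as the reduced intersection with $\Spec\bar R_\cG$; hence the product map is a closed immersion into $\wh\gN^{p^{-\infty}}_\GL$, and it factors through the closed subscheme $\wh\gN^{p^{-\infty}}_\cG$.

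The remaining point — and this is the main obstacle — is surjectivity, i.e.\ that every geometric point of $\gN_\cG$ lies in the $\Qisg^\circ_G(\BX)$-orbit of $\wh\gJ^{p^{-\infty}}_\cG$. Here I would argue on $\fo_C$-points after lifting to characteristic zero, mimicking the proof of Lemma~\ref{lem:QisgG} and the last paragraph of the proof of Theorem~\ref{thm:CenLeaf}. Fix $\xi:R_\cG\to\fo$ satisfying Proposition~\ref{prop:KPEtaleCycles}(\ref{prop:KPEtaleCycles:Torsor}); the $\fo$-flat lift of $\wh\gN^{p^{-\infty}}_\GL$ is $\Qisg^\circ(\BX)_\xi\times \wh\gJ^{p^{-\infty}}_{\GL,\xi}$ via Proposition~\ref{prop:APSGL}, and I want to show that the $\fo$-flat lift $\mathfrak{N}_{\cG,\xi}$ of $\wh\gN^{p^{-\infty}}_\cG$ inside it coincides with $\Qisg^\circ_G(\BX)_\xi\times \wh\gJ^{p^{-\infty}}_{\cG,\xi}$. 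By Zariski density it suffices to check equality on $\fo_C$-points, and given an $\fo_C$-point $(\gamma,\eta)$ of the $\GL$-orbit lying in $\mathfrak{N}_{\cG,\xi}$: the composite quasi-isogeny from the universal $p$-divisible group to $\BX$ is then tensor-preserving (because the corresponding point of $R_\GL$ factors through $R_\cG$, so Proposition~\ref{prop:KPEtaleCycles} gives tensor-preserving étale comparison, and one transports this across as in Proposition~\ref{prop:QisgActionRZ} using the Fargues-Fontaine comparison of \cite{ScholzeWeinstein:RZ, FarguesFontaine:VB}); splitting off the isogeny-leaf factor $\eta$ (whose associated quasi-isogeny $\iota$ is tensor-preserving precisely when $\eta\in\wh\gJ^{p^{-\infty}}_\cG$, again by Proposition~\ref{prop:KPEtaleCycles}) shows first $\eta\in\wh\gJ^{p^{-\infty}}_{\cG,\xi}$ and then $\gamma\in\Qisg^\circ_G(\BX)(\fo_C)$ by the definition of the tensor-preserving quasi-isogeny group (Corollary~\ref{cor:TenHom}). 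Reducing the resulting isomorphism modulo $\m_\fo$ gives the theorem; the delicate part throughout is the same compatibility between crystalline and étale tensors under base change to $\fo_C$ that already powered Proposition~\ref{prop:QisgActionRZ}, so I would invoke that argument verbatim rather than repeat it.
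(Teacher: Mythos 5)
Your overall architecture (well-definedness of the restricted map, closed immersion, then surjectivity, all bootstrapped from Proposition~\ref{prop:APSGL}) matches the paper's, and the first two steps are essentially fine. The gap is in the surjectivity step, and it is genuine: from the hypothesis that a point $(\gamma,\eta)$ of $\Qisg^\circ(\BX)\times\wh\gJ^{p^{-\infty}}_\GL$ maps into $R_\cG$ you only learn that the \emph{composite} quasi-isogeny $\gamma\circ\iota_\eta$ is tensor-preserving, and your ``splitting off the isogeny-leaf factor'' is circular --- you cannot conclude that $\iota_\eta$ is tensor-preserving without already knowing $\gamma\in\Qisg^\circ_G(\BX)$, and vice versa; Proposition~\ref{prop:KPEtaleCycles} gives no independent handle on either factor separately. (Note also that $\gJ_\cG$ is \emph{defined} as the reduced intersection $\gJ_\GL\cap\Spec\bar R_\cG$, not by tensor-preservation of $\iota$, so the equivalence you invoke in the parenthesis is itself something to be proved.) A secondary problem is that, unlike $\Qisg^\circ(\BX)_\xi$, the characteristic-zero lift of the isogeny-leaf factor carries no moduli interpretation whose $\fo_C$-points are deformations testable against Proposition~\ref{prop:KPEtaleCycles}, so the $\fo_C$-point comparison that powered Theorem~\ref{thm:CenLeaf} and Lemma~\ref{lem:QisgG} does not transfer cleanly.

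The paper breaks the circularity by staying in characteristic $p$ and reducing to one-dimensional points: given $\xi:\Spec\Fpbar[[t]]\ra\gN_\cG$, it writes $\xi=(\gamma_\xi,\jmath_\xi)$ via Proposition~\ref{prop:APSGL} over $R=\Fpbar[[t^{p^{-\infty}}]]$. The quasi-isogeny $\jmath_\xi:X_\xi\dra\BX_R$ is defined over all of $\Spec R$ (this is exactly what the isogeny-leaf factor buys) and reduces to the identity at the closed point, while the $F$-isocrystal of $X_\xi$ is constant over $\Spec R$; hence the rigidity of Frobenius-invariant tensors on constant $F$-isocrystals (\cite[Lemma~3.9]{RapoportRichartz:Gisoc}) forces $\jmath_\xi$ to be tensor-preserving. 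Independently, the quasi-isogenies $\iota_\xi^{(i)}$ over $R/t^i$ descend to artin local subrings, where $F$-isocrystals are determined by the closed fibre, so they too are tensor-preserving. Combining the two gives $\gamma_\xi^{(i)}=\iota_\xi^{(i)}\circ\jmath_{\xi,R/t^i}\iv\in\Qisg^\circ_G(\BX)(R/t^i)$, and then $\jmath_\xi=\gamma_\xi\iv\cdot\xi$ lands in $\gJ_\cG$ because the $\Qisg^\circ_G(\BX)$-action stabilises $\Spf R_\cG$. To repair your argument you would need to supply an input of this kind --- some rigidity statement that pins down the tensors on one of the two factors independently of the other.
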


Before we prove this theorem, let us record the following immediate corollary:
\begin{corsub}\label{cor:APS}
The natural isomorphism $\pi_\infty:\gC_\GL^{p^{-\infty}}\times \gJ_\GL^{p^{-\infty}} \riso \gN_\GL^{p^{-\infty}}$ in \cite[Corollary~4.4]{Hamacher:DeforSpProdStr} restricts to the natural isomorphism $\pi_\infty:\gC_\cG^{p^{-\infty}}\times \gJ_\cG^{p^{-\infty}} \riso \gN_\cG^{p^{-\infty}}$, and its completion recovers the isomorphism in Theorem~\ref{thm:APS}.
\end{corsub}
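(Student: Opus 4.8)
The plan is to deduce the corollary directly from Theorem~\ref{thm:APS} together with Proposition~\ref{prop:APSGL}, essentially by tracking how all the relevant objects in the $\cG$-setting sit inside their $\GL(\Lambda)$-counterparts and invoking the compatibility already built into those two results. First I would recall the set-up: by Proposition~\ref{prop:OortFoliation} (applied to both $R_\GL$ and $R_\cG$) the central leaf $\gC_\cG$ is the underlying reduced scheme of $\gC_\GL\times_{\Spec R_\GL}\Spec R_\cG$, and by definition $\gJ_\cG$ is the reduced intersection of $\gJ_\GL$ with $\Spec\bar R_\cG$ inside $\Spec\bar R_\GL$, while $\gN_\cG$ is the closed Newton stratum of $\Spec\bar R_\cG$. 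Since perfection and $\m$-adic completion are exact on these closed immersions of reduced schemes, the same containments persist after applying $(-)^{p^{-\infty}}$ and $\wh{(-)}$; in particular $\wh\gC_\cG^{p^{-\infty}}$, $\wh\gJ_\cG^{p^{-\infty}}$, $\wh\gN_\cG^{p^{-\infty}}$ are closed formal subschemes of $\wh\gC_\GL^{p^{-\infty}}$, $\wh\gJ_\GL^{p^{-\infty}}$, $\wh\gN_\GL^{p^{-\infty}}$ respectively.

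Next I would invoke Theorem~\ref{thm:CenLeaf}, which supplies the identifications $\wh\gC_\GL^{p^{-\infty}}\cong\Qisg^\circ(\BX)_{\Fpbar}$ and $\wh\gC_\cG^{p^{-\infty}}\cong\Qisg^\circ_G(\BX)_{\Fpbar}$, compatibly with the closed immersion $\Qisg^\circ_G(\BX)\hra\Qisg^\circ(\BX)$ coming from Definition~\ref{def:Qisg}. Under these identifications, Proposition~\ref{prop:APSGL} already asserts that the map (\ref{eqn:APSviaOrbitGL}) is the completion of Hamacher's $\pi_\infty:\gC_\GL^{p^{-\infty}}\times\gJ_\GL^{p^{-\infty}}\riso\gN_\GL^{p^{-\infty}}$ of \cite[Corollary~4.4]{Hamacher:DeforSpProdStr}. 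Theorem~\ref{thm:APS} then tells us that this isomorphism of formal schemes restricts to an isomorphism $\Qisg^\circ_G(\BX)_{\Fpbar}\times\wh\gJ^{p^{-\infty}}_\cG\riso\wh\gN^{p^{-\infty}}_\cG$. The content of the corollary is to upgrade this ``formal completion'' statement to a statement about the (non-completed) perfect schemes $\gC_\cG^{p^{-\infty}}$, $\gJ_\cG^{p^{-\infty}}$, $\gN_\cG^{p^{-\infty}}$.

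For that upgrade, the key point is that $\pi_\infty$ of \cite[Corollary~4.4]{Hamacher:DeforSpProdStr} is a morphism of (perfect, not-completed) schemes which, by Proposition~\ref{prop:APSGL}, agrees on formal completions along the closed point with the $\Qisg^\circ(\BX)$-orbit map. I would argue as follows: $\gN_\cG$ is a closed subscheme of $\gN_\GL$ and $\pi_\infty^{-1}(\gN_\cG^{p^{-\infty}})\subset\gC_\GL^{p^{-\infty}}\times\gJ_\GL^{p^{-\infty}}$ is a closed subscheme whose formal completion along the closed point is, by Theorem~\ref{thm:APS}, exactly $\wh\gC_\cG^{p^{-\infty}}\times\wh\gJ_\cG^{p^{-\infty}}$ (using the product identification $\wh{(\gC_\GL\times\gJ_\GL)}^{p^{-\infty}}=\wh\gC_\GL^{p^{-\infty}}\times\wh\gJ_\GL^{p^{-\infty}}$, legitimate since $\gC_\GL^{p^{-\infty}}$ is the perfection of a formal power series ring and the completion of a product of an affine formal scheme with a completed local ring is the product of completions). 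Since $\gC_\cG^{p^{-\infty}}\times\gJ_\cG^{p^{-\infty}}$ and $\pi_\infty^{-1}(\gN_\cG^{p^{-\infty}})$ are both reduced closed subschemes of $\gC_\GL^{p^{-\infty}}\times\gJ_\GL^{p^{-\infty}}$ with the same formal completion at the (unique) closed point, and both are stable under the translation action making them homogeneous fibrations over $\gJ_\cG^{p^{-\infty}}$ (the $\Qisg^\circ_G(\BX)_{\Fpbar}$-action in the first factor), a local-global / Zariski density argument — exactly of the type used in the proof of Theorem~\ref{thm:CenLeaf} and in Lemma~\ref{lem:QisgG} — forces them to coincide as subschemes. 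Therefore $\pi_\infty$ carries $\gC_\cG^{p^{-\infty}}\times\gJ_\cG^{p^{-\infty}}$ isomorphically onto $\gN_\cG^{p^{-\infty}}$, and by Proposition~\ref{prop:APSGL} its completion is the isomorphism of Theorem~\ref{thm:APS}. The main obstacle I anticipate is the bookkeeping needed to make the ``same formal completion $\Rightarrow$ equal'' step rigorous for these infinite-type perfect schemes: one must check that $\gN_\cG$ really is cut out inside $\gN_\GL$ by equations pulled back from $\bar R_\cG$ (which follows from the compatibility of the Newton stratification in Lemma~\ref{lem:GIsoc} with the $\GL(\Lambda)$-Newton stratification via the closed immersion $\cG\hra\GL(\Lambda)$), and that reducedness plus homogeneity under $\Qisg^\circ_G(\BX)_{\Fpbar}$ genuinely pins the subscheme down — this is where one reuses the Zariski-closure argument that $\Spec\bar R_\cG$ is the closure of its generic fibre, as in the proof of Theorem~\ref{thm:QisgActionRZ}.
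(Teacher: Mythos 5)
Your overall reduction is the one the paper intends: Corollary~\ref{cor:APS} is recorded there as an immediate consequence of Theorem~\ref{thm:APS} together with Proposition~\ref{prop:APSGL} (and the identification $\wh\gC_\cG^{p^{-\infty}}\cong\Qisg^\circ_G(\BX)_{\Fpbar}$ from Theorem~\ref{thm:CenLeaf}), with no separate argument given, so in spirit you follow the paper. The real difference lies in how the completed statement is transferred back to the non-completed perfect schemes. The paper never needs a ``same formal completion $\Rightarrow$ equal'' descent: its proof of Theorem~\ref{thm:APS} is carried out after algebraising the formal schemes and shows that every $\Spec\Fpbar[[t]]$-point of $\gN_\cG$ lies in the image of the closed immersion; that density argument already takes place at the level where the corollary lives, so the scheme-level restriction statement falls out of the proof of the theorem itself.

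Your route instead treats Theorem~\ref{thm:APS} as a black box about completions and then descends, and the supporting arguments you offer for the descent do not work as stated. First, the $\Qisg^\circ_G(\BX)_{\Fpbar}$-action is only constructed on formal completions (Theorem~\ref{thm:QisgActionRZ} and the orbit map), so ``homogeneity'' of the non-completed closed subschemes of $\gC_\GL^{p^{-\infty}}\times\gJ_\GL^{p^{-\infty}}$ is not available. Second, the Zariski-closure argument you cite from the proof of Theorem~\ref{thm:QisgActionRZ} concerns the $p$-torsion-free ring $R_\cG$ and its generic fibre and has no analogue for $\bar R_\cG$. Third, it is neither known nor needed that $\gN_\cG$ is the pullback of $\gN_\GL$ to $\Spec\bar R_\cG$; only the inclusion $\gN_\cG\subseteq\gN_\GL$ enters. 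What your argument actually requires is the lemma that a reduced closed subscheme of $\Spec\bar R_\GL^{p^{-\infty}}$ (equivalently, a closed subset of $\Spec\bar R_\GL$) is determined by the $\m_R$-adic completion of its perfection. This is true, since $\bar R_\GL$ is a reduced complete local noetherian ring, but it needs a genuine argument (for instance via injectivity of the map from each quotient by a minimal prime into its completed perfection, which rests on integral-closure estimates for powers of the maximal ideal) that your sketch does not supply. Either prove that lemma, or, closer to the paper, note that the density step in the proof of Theorem~\ref{thm:APS} already shows the image of $\gC_\cG^{p^{-\infty}}\times\gJ_\cG^{p^{-\infty}}$ is a closed subscheme of $\gN_\cG^{p^{-\infty}}$ containing a dense set of points, which gives the non-completed isomorphism with no descent from completions at all.
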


\begin{proof}[Proof of Theorem~\ref{thm:APS}]
The map is clearly a closed immersion, so it suffices to show that (after algebraising the formal schemes) the image is dense. Since the set of $1$-dimensional points in $\gN_\cG$ is dense, it suffices to show that any map $\xi:\Spec \Fpbar[[t]]\ra\gN_\cG$ factors through the image of $\gC_\GL^{p^{-\infty}}\times \gJ_\GL^{p^{-\infty}} $.

We set $R:=\Fpbar[[t^{p^{-\infty}}]]$, and view $\xi$ also as $\Spf R\ra\wh\gN_\cG^{p^{-\infty}}$. By Proposition~\ref{prop:APSGL}, there exist a map $\gamma_\xi:\Spf R \ra \Qisg^\circ(\BX)$ and a quasi-isogeny $\jmath_\xi:X_\xi \ra \BX_R$ (defined over $\Spec R$), such that $(\gamma_\xi,\jmath_\xi)$ maps to $\xi\in\gN_\GL(R)$. To prove the theorem, we want to show that $\jmath_\xi \in \gJ_\cG(R)$ and $\gamma_\xi \in \Qisg^\circ_G(\BX)(R)$. Since we have  $\jmath_\xi = \gamma_\xi\iv\cdot\xi$, it suffices to show that $\gamma_\xi\in\Qisg^\circ_G(\BX)$  as the $\Qisg^\circ_G(\BX)$-action stabilises $\Spf R_\cG$. 

Let us summarise the setting as follows.
\begin{enumerate}
\item The $R$-point $\jmath_\xi\in\gJ_\GL(R)$ corresponds to the quasi-isogenies 
\[
\jmath_\xi:X_\xi \dra \BX_R
\]
defined over $\Spec R$ (not just over $\Spf R$).
\item The map $\gamma_\xi:\Spf R \ra \Qisg^\circ(\BX)$ corresponds to a collection of quasi-isogenies over $R/t^i$ (for each $i\in\Z_{>0}$)
\[
\gamma_\xi\com i:\BX_{R/t^i} \dra \BX_{R/t^i}.
\]
compatible with respect to the natural projections of the base ring $R/t^i$.
\item The map $\xi:\Spf R\ra\wh\gN^{p^{-\infty}}_\cG$ gives rise to a $p$-divisible group $X_\xi$ over $R$, together with  quasi-isogenies over $R/t^i$ (for each $i\in\Z_{>0}$)
\[
\iota_\xi\com i:X_{\xi, R/t^i} \dra \BX_{R/t^i},
\]
compatible with respect to the natural projections of the base ring $R/t^i$.
\end{enumerate}
Let $M_\xi$ denote the Dieudonn\'e module of $X_\xi$. (Alternatively, $M_\xi$ can also be obtained as the base change from $M_{\cG}$ via the map  $\Zink(R_\cG)\ra W(R)$ induced by $\xi$.) Let  $(s_\alpha)\subset M_\xi^\otimes$ denote the pull back of $(s_\alpha)\subset M_\cG^\otimes$. Similarly, for the f-semiperfect ring $R/t^i$ we have a universal $p$-adic PD hull $\Acris(R/t^i)\thra R/t^i$ (containing $W(R)$ as a subring), and we can view $M_\xi\com i:=\Acris(R/t^i)\otimes_{W(R)}M_\xi$ as a ``Dieudonn\'e module'' of $X_{\xi, R/t^i}$ (\emph{cf.} \cite[\S4]{ScholzeWeinstein:RZ}.) We also obtain the crystalline tensors $(s_\alpha)\subset (M_\xi\com i)^\otimes$ on $X_{\xi,R/t^i}$ by base change.

We claim that $\iota_\xi\com i$  preserves the tensors $(s_\alpha)$. Indeed, the map $R_\cG\ra R/t^i$, induced by $\xi$, can be defined over some artin local subring $\Fpbar[t^{p^{-n}}]/t^i\subset R/t^i$, so the quasi-isogeny $\iota_\xi\com i$ descends over $\Fpbar[t^{p^{-n}}]/t^i$ for some $n\gg0$.
Now, the claim follows since $F$-isocrystals over the spectrum of an artin local ring only depend on the fibre at the closed point (\emph{cf.}  \cite[the proof of Corollary~5.1.2]{dejong:crysdieubyformalrigid}) and the quasi-isogeny $\iota_\xi\com i$ lifts the identity map on $\BX$.

By construction, we have $\gamma_\xi\com i\circ\jmath_{\xi, R/t^i} = \iota_\xi\com i$ for any $i\in\Z_{>0}$.
Therefore, to show $\gamma_\xi\com i\in\Qisg^\circ_G(\BX)(R/t^i)$ for each $i$ (i.e., the induced automorphism of $\Bcris^+(R/t^i)\otimes\bM$ preserves the tensors $(s_\alpha)$), it suffices to show that $\jmath_\xi$ is a tensor-preserving quasi-isogeny over $\Spec R$; i.e., the isomorphism of $F$-isocrystals over $\Spec R$
\begin{equation}\label{eqn:KatzIsotriv}
M_\xi[\ivtd p] \riso W(R)\otimes_{\Zpbr}\bM[\ivtd p],
\end{equation}
induced by $\jmath_\xi$, preserves the tensors $(s_\alpha)$.\footnote{Here, one can give an alternative construction of $\jmath_\xi$ without using Proposition~\ref{prop:APSGL}, as follows. Since $X_\xi$ has \emph{constant} Newton polygon, it is known that there exists a unique isomorphism of $F$-isocrystals $M_\xi[\ivtd p] \riso W(R)\otimes_{\Zpbr}\bM[\ivtd p]$ that reduces to the identity map on $\bM[\ivtd p]$ via $W(R)\thra W(\Fpbar) = \Zpbr$. (The existence of such isomorphism is proved in \cite[Theorem~2.7.4]{Katz:SlopeFil}, and the uniqueness follows from \cite[Lemma~3.9]{RapoportRichartz:Gisoc}). Now, by the Dieudonn\'e theory over $R$ (\emph{cf.} \cite[Corollaire~3.4.3]{Berthelot:PerfectValuationRing}), the above isomorphism of $F$-isocrystals gives rise to a unique quasi-isogeny $X_\xi\dra \BX_R$ of $p$-divisible groups over $\Spec R$, which should recover $\jmath_\xi$ by  uniqueness.}   Indeed, (\ref{eqn:KatzIsotriv}) should be tensor-preserving, since any Frobenius-invariant tensor on a \emph{constant} $F$-isocrystal over $R$ is determined by its special fibre (\emph{cf.} \cite[Lemma~3.9]{RapoportRichartz:Gisoc}), and $\jmath_\xi$ reduces to the identity map on the special fibre $\BX$. This concludes the proof.
\end{proof}

\subsection{Application to integral models of Hodge-type Shimura varieties of parahoric level}
Let $\sS'$ be the scheme over $\Zpbr$ classifying principally polarised abelian varieties of dimension~$g$ with some prime-to-$p$ level structures. Let $\sS$ be a scheme finite and unramified over $\sS'_{\fo_{\breve E}}:=\sS'\times_{\Spec\Zpbr}\Spec\fo_{\breve E}$, where $\fo_{\breve E}$ is a finite extension of $\Zpbr$. Assume that there exists a smooth algebraic subgroup $\cG\subset \GSp_{2g}$ over $\Zp$ which satisfies the following:
\begin{enumerate}
\item The generic fibre $G:=\cG_{\Qp}$ is a reductive group, and $\cG = \cG_x$ is a Bruhat-Tits integral model of $G$ associated to $x\in\cB(G,\Qp)$ (\emph{cf.} Definition~\ref{def:KP}). 
\item 
For any $x\in\sS(\Fpbar)$, there exists an isomorphism of $p$-divisible groups $\BX_b\cong\sA_x[p^\infty]$ for some $b\in G(\Qpbr)$ (where $\sA_x$ is the fibre at $x$ of the universal abelian scheme over $\sS'$), such that $b$ and the embedding $\cG\hra\GSp_{2g}\subset\GL_{2g}$ satisfy the conditions in Definition~\ref{def:LocShDatum}, and  the formal completion $\wh\sS_x\subset \wh\sS'_{x}\times_{\Spf\Zpbr}\Spf\fo_{\breve E}$ coincides with $\Spf R_\cG$ (\emph{cf.} Definition~\ref{def:LocShDatum}) as a closed formal subscheme of the universal deformation space of $\BX_b$.
\end{enumerate}
Note that if $(G,\UH)$ is a Shimura datum that admits an embedding into a Siegel Shimura datum, then the integral model of Shimura varieties with level $\KK = \KK^p\cG(\Zp)$ constructed in \cite{KisinPappas:ParahoricIntModel} gives an example of $\sS$ if $p>2$ does not divide the order of $\pi_1(G^\der)$; \emph{cf.} \cite[Corollary~4.2.4]{KisinPappas:ParahoricIntModel}.

Let us choose a closed point $x\in\sS(\Fpbar)$, and set $\BX:=\sA_x[p^\infty]$ (where $\sA$ is the pull back of the universal abelian scheme over $\sS'$). Let $\Spf R_\GL$ denote the formal scheme classifying $p$-divisible groups deforming $\BX$, and we identify $\wh\sS_x = \Spf R_\cG$ as before. By  \cite[Proposition~2.2]{Oort:Foliations}, there exists a (reduced) locally closed subscheme $\sC_\sS(\BX)\subset \sS_{\Fpbar}$ such that a geometric point $\bar y:\Spec \kappa\ra\sS_{\Fpbar}$ factors through $\sC_\sS(\BX)$ if and only if we have $\sA_{\bar y}[p^\infty]\cong\BX_\kappa$. The following is a corollary of Theorem~\ref{thm:CenLeaf}.

\begin{corsub}\label{cor:CenLeaf}
We write $\sC:=\sC_{\sS}(\BX)$ for brevity.
For any $y\in\sC(\Fpbar)$, the formal completion $\wh\sC_y$ coincides with the formal completion $\wh\gC_\cG$ of the central leaf $\gC_\cG\subset \Spec R_\cG$ (as in Theorem~\ref{thm:CenLeaf}). Furthermore, $\sC$ is smooth of equi-dimension $\langle 2\rho, \nu_{[b]}\rangle$ (using the notation from Proposition~\ref{prop:QEnd}).
\end{corsub}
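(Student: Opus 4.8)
The plan is to deduce Corollary~\ref{cor:CenLeaf} from Theorem~\ref{thm:CenLeaf} together with a descent/globalization argument in the style of \cite[Proposition~2.2]{Oort:Foliations}. First I would observe that, by the very definition of $\sC = \sC_\sS(\BX)$ and of $\gC_\cG \subset \Spec R_\cG$ (cf.\ Proposition~\ref{prop:OortFoliation}), the formal completion $\wh\sC_y$ at a point $y\in\sC(\Fpbar)$ is canonically identified with $\wh{\gC_{\cG_y}}$, where $\cG_y$ is the deformation ring attached to $\BX_y:=\sA_y[p^\infty]$ via the identification $\wh\sS_y = \Spf R_{\cG_y}$ supplied by hypothesis~(2) on $\sS$; indeed both are the reduced closed subschemes of $\Spf\wh\cO_{\sS,y}$ cut out by the condition ``fibrewise isomorphic to $\BX_y$''. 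Since $y\in\sC(\Fpbar)$ means $\BX_y\cong\BX_\kappa$, we have $\BX_y\cong\BX$, so $\cG_y$ is (isomorphic to) the $\cG$ of the statement and $\wh\sC_y \cong \wh\gC_\cG$.

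Next I would invoke Theorem~\ref{thm:CenLeaf}, whose hypothesis (the existence of $\xi:\Spf\fo\to\Spf R_\cG$ satisfying Proposition~\ref{prop:KPEtaleCycles}(\ref{prop:KPEtaleCycles:Torsor})) is guaranteed here because $R_\cG$ arises as a completed local ring of the Kisin--Pappas integral model of a Hodge-type Shimura variety, as recalled in Remark~\ref{rmk:NonNeutral}. Theorem~\ref{thm:CenLeaf} gives an isomorphism $\Qisg^\circ_G(\BX)_{\Fpbar}\riso \wh\gC_\cG^{p^{-\infty}}$. Combined with Proposition~\ref{prop:Qisg} (or Proposition~\ref{prop:main}(2)), which identifies $\Qisg^\circ_G(\BX)$ with $\Spf\Zpbr[[x_1^{p^{-\infty}},\cdots,x_d^{p^{-\infty}}]]$ for $d = \langle 2\rho,\nu_{[b]}\rangle$, this yields $\wh\gC_\cG^{p^{-\infty}}\cong \Spf\Fpbar[[x_1^{p^{-\infty}},\cdots,x_d^{p^{-\infty}}]]$, hence $\wh\gC_\cG$ is the formal spectrum of an $\Fpbar$-algebra whose perfection is a perfect power-series ring of the stated dimension. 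Taking perfection is faithful on dimensions and reflects regularity, so $\wh\gC_\cG$ is regular of dimension $d$; since this holds at every $\Fpbar$-point of $\sC$, the scheme $\sC$ is regular --- hence smooth over $\Fpbar$, as $\Fpbar$ is perfect --- of pure dimension $\langle 2\rho,\nu_{[b]}\rangle$.

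A point that needs care is that the Newton point $\nu_{[b]}$ attached to $\sA_y[p^\infty]$ is constant along $\sC$ (this is immediate, since the $p$-divisible group is fibrewise constant, so in particular isoclinic-type data are constant), so the dimension formula $\langle 2\rho,\nu_{[b]}\rangle$ is the same at every point of a connected component of $\sC$; equidimensionality over all of $\sC$ then follows because $[b]$ is the fixed $\sig$-conjugacy class determined by $x$. The main obstacle I anticipate is purely bookkeeping rather than substantive: namely checking that the identification $\wh\sC_y \cong \wh\gC_\cG$ is compatible with the reduced-subscheme structures on both sides and that the formal smoothness of $\wh\gC_\cG$ descends to smoothness of the (locally of finite type) scheme $\sC$ at $y$ --- this is a standard argument using that a Noetherian local ring is regular iff its completion is, and that the perfection of a regular complete local $\Fpbar$-algebra is a perfect regular ring of the same Krull dimension. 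Everything genuinely new has already been proved in Theorem~\ref{thm:CenLeaf} and Proposition~\ref{prop:Qisg}; the corollary is essentially their translation into the global geometric setting via Oort's construction of $\sC_\sS(\BX)$.
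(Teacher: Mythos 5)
Your identification $\wh\sC_y\cong\wh\gC_\cG$ and your derivation of the dimension are fine (perfection preserves the underlying topological space, hence Krull dimension, so $\dim\wh\gC_\cG=\langle 2\rho,\nu_{[b]}\rangle$ does follow from Theorem~\ref{thm:CenLeaf}). But the smoothness step has a genuine gap: you assert that ``taking perfection \dots reflects regularity,'' and this is false. Knowing that $\wh\gC_\cG^{p^{-\infty}}\cong\Spf\Fpbar[[x_1^{p^{-\infty}},\dots,x_d^{p^{-\infty}}]]$ does \emph{not} imply that $\wh\gC_\cG$ is regular: non-isomorphic (e.g.\ singular and regular) $\Fpbar$-algebras can have isomorphic perfections, the cuspidal curve being the standard example. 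The paper is explicit about this point in the remark following Theorem~\ref{thm:CenLeaf}, which states that the theorem ``does not force $\gC_\cG$ to be isomorphic to a formal spectrum of a formal power series ring over $\Fpbar$'' and that smoothness is only obtained later, in the global setting of Corollary~\ref{cor:CenLeaf}. So your argument, which stays entirely at the level of one formal completion, cannot work.

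The missing idea is a homogeneity argument that genuinely uses the global scheme $\sC$: since every $y\in\sC(\Fpbar)$ satisfies $\sA_y[p^\infty]\cong\BX$, the completions $\wh\sC_y$ are isomorphic to one another for all closed points $y$. On the other hand $\sC$ is a reduced scheme locally of finite type over the perfect field $\Fpbar$, so its smooth locus is open and dense in each irreducible component; combined with the fact that all closed points look alike, this forces \emph{every} point to be smooth. This is the route the paper takes, and it is the only place where the passage from the local statement (Theorem~\ref{thm:CenLeaf}) to smoothness actually happens; your proposal skips it.
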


\begin{proof}
The isomorphism $\wh\sC_y\cong\wh\gC_\cG$ is clear from the definition of $\sC$ and $\gC_\cG$ (by looking at the geometric points of $\sC$ and $\gC_\cG$). Furthermore, $\wh\sC_y$'s are isomorphic for any $y\in\sC(\Fpbar)$, so they have to be formally smooth by the openness of smooth locus of $\sC$. Finally, the dimension of $\sC$ follows from Theorem~\ref{thm:CenLeaf}.
\end{proof}

\begin{rmksub}
If $\sS$ is an integral model of Hodge-type Shimura varieties constructed in \cite{KisinPappas:ParahoricIntModel}, then it is possible to show that there exist crystalline tensors (i.e., maps of $F$-isocrystals) over $\sS_{\Fpbar}^\perf$
\[
s_\alpha:\triv\ra(\DD(\sA[p^\infty])^*)^\otimes[\ivtd p],
\]
such that the pointwise stabiliser of the fibres $(s_{\alpha,x})$ at any $x\in\sS(\Fpbar)$  is isomorphic to $G_{\Qpbr}$; \emph{cf.} \cite[Corollary~3.3.7]{HamacherKim:Mantovan}. Then by repeating the proof of Theorem~\ref{thm:CenLeaf}, one can show that the tensors $(s_\alpha)$ are fibrewise constant on each central leaf $\sC_\sS(\sA_x[p^\infty])$.
\end{rmksub}


\begin{rmksub}\label{rmk:globalAPS}
If $\sS$ is an integral canonical model of Hodge-type Shimura varieties with \emph{hyperspecial} level structure, then Hamacher \cite{Hamacher:ShVarProdStr} constructed a natural tower of finite \'etale coverings of a central leaf generalising Igusa towers considered in  \cite{HarrisTaylor:TheBook,Mantovan:Thesis,Mantovan:Foliation}. We expect that the existence of ``Igusa towers'' and the almost product structure of the Newton strata (in the sense of \cite{Hamacher:ShVarProdStr}) should generalise for integral models of Hodge-type Shimura varieties with \emph{parahoric} level structure constructed in \cite{KisinPappas:ParahoricIntModel}. This question, as well as its cohomological consequence, is considered in \cite{HamacherKim:Mantovan} using the main result of this paper.

Note that just as the case of hyperspecial levels, in order to define various natural group actions on Igusa towers, one needs to understand ``isogeny classes'' of mod~$p$ points of $\sS$; i.e., the existence of a natural map from affine Deligne-Lusztig varieties to $\sS(\Fpbar)$ (\emph{cf.}  \cite[Proposition~1.4.4]{Kisin:LanglandsRapoport}). In \cite{HamacherKim:Mantovan}, we need to assume  the ``parahoric-level generalisation'' of \emph{loc.~cit.}, which is not fully known yet. On the other hand, there is a special case outside the hyperspecial case where such a generalisation can be obtained -- namely, the case when $G$ is residually split using \cite[Theorem~0.2]{HeZhou:ConnCompADL} -- and it seems quite reasonable to believe that the main result of  \cite{HeZhou:ConnCompADL} should hold in more generality.
\end{rmksub}
\bibliography{bib}
\bibliographystyle{amsalpha}

\end{document}